\let\th@plain\relax \makeatother 
\setlist[enumerate,1]{label={\roman*)}}
\tikzset{> =stealth}
\theoremstyle{plain}
\newtheorem{theorem}{Theorem}[section]
\newtheorem{lemma}[theorem]{Lemma}
\newtheorem{proposition}[theorem]{Proposition}
\newtheorem{corollary}[theorem]{Corollary}
\theoremstyle{definition}
\newtheorem{definition}[theorem]{Definition}
\theoremstyle{remark}
\newtheorem{remark}[theorem]{Remark}
\renewcommand{\epsilon}{\varepsilon}
\renewcommand{\phi}{\varphi}
\DeclareSymbolFont{mathdesignA}{MDA}{mdput}{m}{n}%
\DeclareMathSymbol{\digamma}{\mathord}{mathdesignA}{"62}
\mathchardef\mhyphen="2D
\renewcommand{\land}{\mathrel{\wedge}}
\renewcommand{\lor}{\mathrel{\vee}}
\newcommand{\Q}{\mathbb{Q}}
\newcommand{\R}{\mathbb{R}}
\newcommand{\E}{\mathcal{E}}
\newcommand{\F}{\mathcal{F}}
\def\U{\mathcal{U}} 
\newcommand{\V}{\mathcal{V}}
\newcommand{\Sloc}{\mathcal{S}}
\newcommand{\id}{\mathrm{id}}
\newcommand{\Frm}{\mathbf{Frm}}
\newcommand{\Loc}{\mathbf{Loc}}
\newcommand{\OLoc}{\mathbf{OLoc}}
\newcommand{\PreUnifLoc}{\mathbf{PUnifLoc}}
\newcommand{\UnifLoc}{\mathbf{UnifLoc}}
\newcommand{\CUnifLoc}{\mathbf{CUnifLoc}}
\newcommand{\PreMetLoc}{\mathbf{PMetLoc}}
\newcommand{\MetLoc}{\mathbf{MetLoc}}
\newcommand{\CMetLoc}{\mathbf{CMetLoc}}
\newcommand{\OLocGrp}{\mathbf{OLocGrp}}
\newcommand{\op}{{^\mathrm{\hspace{0.5pt}op}}}
\renewcommand{\O}{\mathcal{O}}
\newcommand{\st}{\mathrm{st}}
\newcommand{\wkclo}[1]{\mathrm{wk}\mhyphen \mathrm{cl}({#1})}
\newcommand{\Cvar}{\mathcal{C}}
\newcommand{\Cauchy}{\mathscr{C}}
\DeclarePairedDelimiter\abs{\lvert}{\rvert}
\DeclarePairedDelimiter\norm{\lVert}{\rVert}
\let\oldabs\abs
\def\abs{\@ifstar{\oldabs}{\oldabs*}}
\let\oldnorm\norm
\def\norm{\@ifstar{\oldnorm}{\oldnorm*}}
\let\save@mathaccent\mathaccent
\newcommand*\if@single[3]{%
  \setbox0\hbox{${\mathaccent"0362{#1}}^H$}%
  \setbox2\hbox{${\mathaccent"0362{\kern0pt#1}}^H$}%
  \ifdim\ht0=\ht2 #3\else #2\fi
  }
\newcommand*\rel@kern[1]{\kern#1\dimexpr\macc@kerna}
\newcommand*\wideaccent[2]{\@ifnextchar^{{\wide@accent{#1}{#2}{0}}}{\wide@accent{#1}{#2}{1}}}
\newcommand*\wide@accent[3]{\if@single{#2}{\wide@accent@{#1}{#2}{#3}{1}}{\wide@accent@{#1}{#2}{#3}{2}}}
\newcommand*\wide@accent@[4]{%
  \begingroup
  \def\mathaccent##1##2{%
    \let\mathaccent\save@mathaccent
    \if#42 \let\macc@nucleus\first@char \fi
    \setbox\z@\hbox{$\macc@style{\macc@nucleus}_{}$}%
    \setbox\tw@\hbox{$\macc@style{\macc@nucleus}{}_{}$}%
    \dimen@\wd\tw@
    \advance\dimen@-\wd\z@
    \divide\dimen@ 3
    \@tempdima\wd\tw@
    \advance\@tempdima-\scriptspace
    \divide\@tempdima 10
    \advance\dimen@-\@tempdima
    \ifdim\dimen@>\z@ \dimen@0pt\fi
    \rel@kern{0.6}\kern-\dimen@
    \if#41
      #1{\rel@kern{-0.6}\kern\dimen@\macc@nucleus\rel@kern{0.4}\kern\dimen@}%
      \advance\dimen@0.4\dimexpr\macc@kerna
      \let\final@kern#3%
      \ifdim\dimen@<\z@ \let\final@kern1\fi
      \if\final@kern1 \kern-\dimen@\fi
    \else
      #1{\rel@kern{-0.6}\kern\dimen@#2}%
    \fi
  }%
  \macc@depth\@ne
  \let\math@bgroup\@empty \let\math@egroup\macc@set@skewchar
  \mathsurround\z@ \frozen@everymath{\mathgroup\macc@group\relax}%
  \macc@set@skewchar\relax
  \let\mathaccentV\macc@nested@a
  \if#41
    \macc@nested@a\relax111{#2}%
  \else
    \def\gobble@till@marker##1\endmarker{}%
    \futurelet\first@char\gobble@till@marker#2\endmarker
    \ifcat\noexpand\first@char A\else
      \def\first@char{}%
    \fi
    \macc@nested@a\relax111{\first@char}%
  \fi
  \endgroup
}
\newcommand\linehat[1]{\overline{\widehat{#1}}}
\newcommand\widelinehat{\wideaccent\linehat}
\title{Uniform locales and their constructive aspects}
\author[G. Manuell]{Graham Manuell}
\address{CMUC, Department of Mathematics, University of Coimbra, 3000-143 Coimbra, Portugal}
\email{graham@manuell.me}
\date{January 2024}
\subjclass[2010]{54E15, 06D22, 03F65, 54B30}
\keywords{uniform frame, metric frame, open locale, localic algebra}
\begin{document}

\begin{abstract}
Much work has been done on generalising results about uniform spaces to the pointfree context. However, this has almost exclusively been done using classical logic, whereas much of the utility of the pointfree approach lies in its constructive theory, which can be interpreted in many different toposes.
Johnstone has advocated for the development of a constructive theory of uniform locales and wrote a short paper on the basic constructive theory via covering uniformities, but he never followed this up with a discussion of entourage uniformities or completions.

We present a more extensive constructive development of uniform locales, including both entourage and covering approaches, their equivalence, completions and some applications to metric locales and localic algebra.

Some aspects of our presentation might also be of interest even to classically minded pointfree topologists. These include the definition and manipulation of entourage uniformities using the internal logic of the geometric hyperdoctrine of open sublocales and the emphasis on pre-uniform locales. The latter leads to a description of the completion as the uniform reflection of the pre-uniform locale of Cauchy filters and a new result concerning the completion of pre-uniform localic rings, which can be used to easily lift addition and multiplication on $\Q$ to $\R$ (or $\Q_p$) in the pointfree setting.
\end{abstract}

\maketitle
\thispagestyle{empty}

\setcounter{section}{-1}
\section{Introduction}

Thirty years ago in \cite{johnstone1991constructive} Johnstone took the first steps in a constructive development of the theory of uniform locales. This was promised to be the first in a series of papers on the topic, but the later papers never materialised and no one else has taken up the mantle.

Still the classical theory of uniform locales (and uniform spaces) has continued unabated and it would be as useful as ever to have access to these techniques in the constructive setting, where the results we prove have greater applicability by being interpretable in any topos with natural numbers object.

This paper can be thought of as a somewhat belated sequel to \cite{johnstone1991constructive} (in addition to making some minor corrections and modifications to the theory developed there). We have nonetheless attempted to keep our development self-contained within reason, though the original paper still provides a useful source of counterexamples (which we do not discuss) and a more exploratory account of what the theory should be.

We describe uniform locales in terms of both entourages and covers and prove their equivalence before developing the theory of uniform completions. Our account is somewhat atypical in how strong a focus it places on pre-uniform locales. This provides an easy route to proving completeness and cocompleteness of the category $\UnifLoc$ of uniform locales by factoring the forgetful functor $\UnifLoc \to \Loc$ into a reflective subcategory $\UnifLoc \hookrightarrow \PreUnifLoc$, a topological functor $\PreUnifLoc \to \OLoc$ and coreflective subcategory $\OLoc \hookrightarrow \Loc$. It also allows us to phrase the relationship between the classifying locale of all Cauchy filters and the completion in terms of the uniform reflection.

However, the true motivation for this approach is revealed in the final section where we discuss a few important examples of uniform locales and make some first applications to localic algebra. Here it seems to be important to be able to take completions of pre-uniform locales, since the uniform reflection might not preserve products, while its composite with the completion always does. This is apparent already in the example of completing the rational numbers to give the localic ring of real numbers.

We also provide a constructive proof of the classically known result that an overt localic group is complete with respect to its two-sided uniformity.
Further properties of uniform locales, such as total boundedness, will be left for a later paper.

Let me end this section by drawing the reader's attention to some other related work in formal topology that I became aware of after starting to write this paper.
In \cite{kawai2017completion}, Kawai describes the completion a uniform \emph{space} as formal topology. He uses the gauge approach to uniform spaces, and as I understand it, what are called (generalised) uniform spaces there are closer to what are sometimes called (quasi-)gauge spaces and have different morphisms.
The paper \cite{curi2006collection} also considers a notion of uniform formal topologies using gauges, but the notion of completeness considered there depends on the (global) points of the formal topology and thus appears to be weaker than the usual definition of completeness for uniform locales.

Finally, Kawai mentions in \cite{kawai2017completion} that the PhD thesis of Fox \cite{fox2005thesis} also discusses uniform formal topologies, though unfortunately I was unable to obtain a copy of this thesis until very recently. The thesis is worth reading. Fox defines uniform formal topologies in terms of covers, describes their relation to metric formal topologies and gauges and defines a uniform completion.
Thus, despite not discussing entourages there is some overlap with the current paper. Nonetheless, our approach is significantly different and I believe that the locale-theoretic framework used in this paper will be more easily understandable to a wider audience than the approach via formal topology used by Fox.

\section{Background}

We assume a basic knowledge of (at least the classical theory of) frames and locales. See \cite{picado2012book} for a good introduction,
which also has a nice account of the classical situation regarding uniform frames and locales.
Background on uniform spaces is not strictly necessary, but can be found in standard general topology textbooks such as \cite{willard1970general}.

We denote the category of locales by $\Loc$ and the category of frames by $\Frm$. We will usually start with a locale $X$ and write $\O X$ for its associated frame.
We maintain a strict notational distinction between these to avoid confusion between an open $a \in \O X$ and an element of $x$ of $X$ in the internal logic (described below).
If $f\colon X \to Y$ is a locale morphism, we write $f^*$ for the corresponding frame map and $f_*$ for its right adjoint. If $f^*$ has a left adjoint, we will write this as $f_!$.

We do \emph{not} assume the reader has any great familiarity with constructive pointfree topology and will attempt to provide a brief introduction below. A reasonably extensive introduction to the topic can be found in the background chapter of \cite{ManuellThesis} and further information can be found in \cite[Part C]{Elephant2}. The constructive theory of metric locales is described in \cite{Henry2016}.

Some understanding of category theory is also assumed. We make some use of the theory of topological functors, a good account of which can be found in \cite{JoyOfCats}.
We will also use the internal logic of a geometric hyperdoctrine, the basic idea of which we describe below. A slightly longer overview can again be found in the background of \cite{ManuellThesis},
but for a more careful and thorough introduction see \cite{PittsLogic}.

\subsection{Positivity, overtness and hyperdoctrines}

Constructively, the initial frame is the lattice of truth values $\Omega$, though this is no longer necessarily isomorphic to the two-element set $\{\top,\bot\}$.
When using constructive logic, we do not have access to double negation elimination and so it is best to phrase definitions in a `positive' way (without using negations) since negations can be difficult to get rid of once they are introduced. An example of this is can be seen in the relevant importance of nonempty versus inhabited sets. At set $X$ is \emph{inhabited} if $\exists x \in X$. This is more useful than the weaker condition that $X \ne \emptyset$, since we have access to the element $x$ to use in later arguments.

A similar concept appears in pointfree topology. An element $a$ of a frame $\O X$ is said to be \emph{positive} (written $a > 0$) if whenever $a \le \bigvee A$ then the set $A$ is inhabited.
Classically, this is equivalent requiring $a \ne 0$, but constructively it is a stronger condition. We say the locale $X$ is positive if $1 > 0$ in $\O X$.
If $V$ is a sublocale of $X$ we write $V \between a$ to mean that the restriction of $a$ to $V$ is positive. In particular, $a \between b \iff a \wedge b > 0$.

A locale $X$ is said to be \emph{overt} if it has a base of positive elements. Some people also call such locales \emph{locally positive} or even \emph{open}. Classically, every locale is overt. Constructively, the condition is nontrivial, though it still holds for many locales that appear in practice. A locale $X$ is overt if and only if the unique map $!\colon X \to 1$ is open. In this case, the left adjoint $\exists\colon \O X \to \Omega$ of the associated frame map measures the positivity of elements: $\exists(a) = \top \iff a > 0$. Another characterisation gives that $X$ is overt if the product projection $\pi_2\colon X \times Y \to Y$ is open for every locale $Y$. In this way it can be seen to be a kind of `dual' to compactness. Finally, we note that open sublocales of overt locales are overt.

We write $\OLoc$ for the category of overt locales and locale morphisms. This is a coreflective subcategory of $\Loc$. (A proof of this fact can be found in \cite{maietti2004structural}, but the main idea is that every locale has a largest overt sublocale.) Thus, $\OLoc$ is complete and cocomplete. Moreover, it can be shown that $\OLoc$ is closed under finite products in $\Loc$
and that $\OLoc$ has (epi, extremal mono)-factorisations where the extremal monomorphisms are precisely the overt sublocale inclusions.

The obvious functor $\O\colon \OLoc\op \to \Frm$ sending an overt locale to its frame of opens satisfies the necessary axioms to be a \emph{geometric hyperdoctrine (without equality)}. This means that for every product projection $\pi_2 \colon X \times Y \to Y$, the map $\O(\pi_2) = \pi_2^*$ has a left adjoint $\exists^X$ satisfying the Frobenius reciprocity condition $\exists^X(a \wedge \pi_2^*(b)) = \exists^X(a) \wedge b$, and moreover, the family of maps $(\exists^X)_Y \colon \O(X \times Y) \to \O Y$ is natural in $Y$ (the Beck--Chevalley condition).
These hold because the projections are open maps, which is why we need overtness.

This geometric hyperdoctrine allow us to interpret \emph{geometric logic} in the category $\OLoc$, with predicates being given by opens. We can discuss \emph{equality judgements} $t =_{\vec{x}} t'$, which state that terms (given by morphisms in the category) are equal, and \emph{sequents} $\phi \vdash_{\vec{x}} \psi$ where $\phi$ and $\psi$ are formulae in the variables $\vec{x}$ involving finite conjunctions, possibly infinitary disjunctions and existential quantification, which mean that the open defined by $\phi$ is contained in that defined by $\psi$. In particular, this logic contains regular logic as fragment and so we have a well-behaved calculus of open relations.
For the details behind the interpretation of logic in hyperdoctrines see \cite{PittsLogic}, while more information about the specific case of $\O\colon \OLoc\op \to \Frm$ can be found in \cite{ManuellThesis}.

\subsection{Strong density and weak closedness}

Another important concept we will need is that of strong density. A locale map $f\colon X \to Y$ is \emph{dense} if $f^*(a) = 0 \implies a = 0$. If $X$ and $Y$ are overt locales, $f$ is \emph{strongly dense} (or \emph{fibrewise dense}) if $a > 0 \implies f^*(a) > 0$. Strong density implies density, but the converse cannot be proved constructively. It is strong density that is more important for the study of uniform locales.

We can express the condition for strong density in terms of $\exists$ as $\exists(a) \le \exists f^* (a)$. Then taking right adjoints gives the equivalent condition that $f_*{!}^*(p) \le {!}^*(p)$ for all $p \in \Omega$. (This definition even makes sense when $X$ and $Y$ are not overt, though we will not need this.)

Strongly dense maps form a factorisation system on $\OLoc$ together with the \emph{weakly closed sublocales}. In particular, every overt sublocale $V \hookrightarrow X$ has a \emph{weak closure} $\wkclo{V} \hookrightarrow X$,
which is the largest sublocale in which it is strongly dense (and this is necessarily overt). It can be shown that if $V$ is an overt sublocale then $V \between a \iff \wkclo{V} \between a$.
Every closed sublocale is weakly closed, but all sublocales of discrete locales are weakly closed, while all open sublocales of $1$ are closed only if excluded middle holds.

With this weaker notion of closedness come weakened separation axioms. In particular, we say a locale $X$ is \emph{weakly Hausdorff} if the diagonal is \emph{weakly} closed.
Defining $a$ to be \emph{weakly rather below} $b$ if $\wkclo{a} \le b$ in the order of sublocales we also obtain a notion of \emph{weak regularity}. See \cite{johnstone1990fibrewise} for more details.

\subsection{Other miscellaneous results}

As is well understood, the lattice of sublocales of a locale $X$ is a coframe $\Sloc X$. Moreover, this is functorial so that for $f\colon X \to Y$ we have a coframe homomorphism $\Sloc f\colon \Sloc Y \to \Sloc X$, which is obtained by pulling back along $f$. This map then has a left adjoint $(\Sloc f)_!\colon \Sloc X \to \Sloc Y$ which can be understood as taking images. Now if $V$ is an overt sublocale of $X$ it can be shown that $(\Sloc f)_!(V)$ is overt and $(\Sloc f)_!(V) \between a \iff V \between f^*(a)$.

We end this section by mentioning some results concerning products which will be of use to us.

\begin{lemma}
 Let $f\colon X \to Y$ and $g\colon X' \to Y'$ be strongly dense maps between overt locales. Then their product $f \times g\colon X \times X' \to Y \times Y'$ is strongly dense.
\end{lemma}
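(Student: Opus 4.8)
The plan is to reduce the statement to the one-variable strong densities of $f$ and $g$ by working with the positivity maps and testing against the rectangular opens that generate $\O(X\times X')$.

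Recall that, since all four locales are overt, we may write $\exists_X\colon\O X\to\Omega$ for the positivity map (denoted simply $\exists$ above), and that strong density of $f$ amounts to $\exists_Y(a)\le\exists_X(f^*(a))$ for all $a\in\O Y$, with the analogous statement for $g$. Strong density of $f\times g$ is then the inequality $\exists_{Y\times Y'}(c)\le\exists_{X\times X'}((f\times g)^*(c))$ for all $c\in\O(Y\times Y')$. Writing $a\oplus a':=\pi_X^*(a)\wedge\pi_{X'}^*(a')$ for a rectangular open, the computational heart of the argument is the identity
\[
\exists_{X\times X'}(a\oplus a')=\exists_X(a)\wedge\exists_{X'}(a'),
\]
and likewise over $Y\times Y'$. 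I would prove this by eliminating the two coordinates in turn: Frobenius reciprocity for the projection $X\times X'\to X$ rewrites the left-hand side as $\exists_X\bigl(a\wedge E(\pi_{X'}^*(a'))\bigr)$, where $E$ is the left adjoint of $\pi_X^*$; Beck--Chevalley identifies $E(\pi_{X'}^*(a'))$ with the constant open $\,!_X^*(\exists_{X'}(a'))$; and a second use of Frobenius, now for $!_X$, collapses this to $\exists_X(a)\wedge\exists_{X'}(a')$. This step is exactly where overtness of the factors is used.

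Granting the identity, the result is immediate on rectangular opens. Since $(f\times g)^*(b\oplus b')=(f^*b)\oplus(g^*b')$, we get $\exists_{Y\times Y'}(b\oplus b')=\exists_Y(b)\wedge\exists_{Y'}(b')$ on the one hand and $\exists_{X\times X'}((f\times g)^*(b\oplus b'))=\exists_X(f^*b)\wedge\exists_{X'}(g^*b')$ on the other, so the two strong-density inequalities for $f$ and $g$ combine under $\wedge$ to give the desired comparison. Finally, every $c\in\O(Y\times Y')$ is a join of rectangular opens, and both $\exists_{Y\times Y'}$ and $\exists_{X\times X'}\circ(f\times g)^*$ preserve arbitrary joins (being composites of join-preserving maps), so the inequality passes from rectangles to all of $\O(Y\times Y')$.

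I expect the main subtlety to be conceptual rather than technical. It is tempting to argue informally in the internal logic by ``applying the strong density of $g$ with $y$ as a free parameter'', but this parametrised form is not a formal consequence of the unparametrised one---it is precisely the assertion that $\id_Y\times g$ is strongly dense, a special case of the theorem itself. The genuine content therefore resides in the rectangular identity above, whose proof relies on Beck--Chevalley and hence on the overtness hypothesis; correspondingly, an alternative route factoring $f\times g=(f\times\id_{Y'})\circ(\id_X\times g)$ and invoking closure of strongly dense maps under composition would still require the same Frobenius/Beck--Chevalley computation to treat a single identity factor, so I would favour the direct rectangular argument.
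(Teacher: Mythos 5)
Your argument is correct, but it takes a different route from the paper's. The paper factors $f \times g = (Y \times g)\circ(f \times X')$, observes that $f \times X'$ and $Y \times g$ are pullbacks of $f$ and $g$ along the product projections (which are open because $X'$ and $Y$ are overt), and then invokes the external fact that strongly dense maps are stable under pullback along open maps, citing Johnstone; the conclusion follows since strongly dense maps compose. You instead give a self-contained computation: the identity $\exists_{X\times X'}(a\oplus a')=\exists_X(a)\wedge\exists_{X'}(a')$, proved via Frobenius reciprocity and Beck--Chevalley for the open projections, reduces the claim to the one-variable inequalities on rectangular opens, and join-preservation of $\exists$ and of $(f\times g)^*$ extends it to all opens. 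Each approach has its merits: the paper's is shorter and conceptually clean but outsources the real content to the pullback-stability lemma (whose proof is where the Frobenius/Beck--Chevalley work actually happens), while yours makes the use of overtness fully explicit and avoids the external citation. Your closing remark correctly identifies why the naive internal-logic argument with a free parameter is circular, and your observation that the factorisation route still needs the same hyperdoctrine computation to handle a single identity factor is essentially an accurate description of what the cited lemma hides.
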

\begin{proof}
 Consider the following pullback diagrams.
 \begin{center}
 \begin{minipage}{0.4\textwidth}
  \centering
  \begin{tikzpicture}[node distance=2.75cm, auto]
    \node (A) {$X \times X'$};
    \node (B) [below of=A] {$X$};
    \node (C) [right=1.8cm of A] {$Y \times X'$};
    \node (D) [below of=C] {$Y$};
    \draw[->] (A) to node [swap] {$\pi_1$} (B);
    \draw[->] (A) to node {$f \times X'$} (C);
    \draw[->] (B) to node [swap] {$f$} (D);
    \draw[->] (C) to node {$\pi_1$} (D);
    \begin{scope}[shift=({A})]
        \draw +(0.25,-0.75) -- +(0.75,-0.75) -- +(0.75,-0.25);
    \end{scope}
  \end{tikzpicture}
 \end{minipage}
 \begin{minipage}{0.4\textwidth}
  \centering
  \begin{tikzpicture}[node distance=2.75cm, auto]
    \node (A) {$Y \times X'$};
    \node (B) [below of=A] {$X'$};
    \node (C) [right=1.8cm of A] {$Y \times Y'$};
    \node (D) [below of=C] {$Y'$};
    \draw[->] (A) to node [swap] {$\pi_2$} (B);
    \draw[->] (A) to node {$Y \times g$} (C);
    \draw[->] (B) to node [swap] {$g$} (D);
    \draw[->] (C) to node {$\pi_2$} (D);
    \begin{scope}[shift=({A})]
        \draw +(0.25,-0.75) -- +(0.75,-0.75) -- +(0.75,-0.25);
    \end{scope}
  \end{tikzpicture}
 \end{minipage}
 \end{center}
 Since $X'$ and $Y$ are overt, the product projections $\pi_1$ and $\pi_2$ are open. But strongly dense maps are stable under pullback along open maps (see \cite[Lemma 1.9]{johnstone1989constructive}) and hence $f \times X'$ and $Y \times g$ are strongly dense. Thus, so is the composite $f \times g = (Y \times g)\circ(f \times X')$.
\end{proof}

\begin{lemma}\label{lem:adjoint_of_product_map}
 Let $f\colon X \to Y$ and $g\colon X' \to Y'$ be locale morphisms. Then $(f \times g)_*(c) = \bigvee\{f_*(a) \oplus g_*(b) \mid a \oplus b \le c\}$.
\end{lemma}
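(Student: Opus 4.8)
The plan is to verify directly that the right-hand side is the right adjoint of $(f \times g)^*$ and then invoke uniqueness of adjoints. Write $h = f \times g$ and recall that $\O(X \times X') = \O X \oplus \O X'$, with $h^*$ determined on the generating basic opens by $h^*(u \oplus v) = f^*(u) \oplus g^*(v)$ for $u \in \O Y$ and $v \in \O Y'$. Setting $R(c) = \bigvee\{f_*(a) \oplus g_*(b) \mid a \oplus b \le c\}$, it suffices to establish the Galois connection $h^*(w) \le c \iff w \le R(c)$ for all $w \in \O(Y \times Y')$ and $c \in \O(X \times X')$, since $h_* = (f \times g)_*$ is by definition the unique map with this property.

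For the implication $w \le R(c) \Rightarrow h^*(w) \le c$, I would use that $h^*$ preserves joins together with the counits $f^* f_* \le \id$ and $g^* g_* \le \id$. Applying $h^*$ termwise gives $h^*(R(c)) = \bigvee\{f^* f_*(a) \oplus g^* g_*(b) \mid a \oplus b \le c\} \le \bigvee\{a \oplus b \mid a \oplus b \le c\} \le c$, where the first inequality is monotonicity of $\oplus$ in each argument. The claim then follows from monotonicity of $h^*$.

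For the converse, suppose $h^*(w) \le c$ and decompose $w = \bigvee_i (a_i \oplus b_i)$ into basic opens using the structure of the frame coproduct. Since $h^*(w) = \bigvee_i (f^*(a_i) \oplus g^*(b_i)) \le c$, each summand satisfies $f^*(a_i) \oplus g^*(b_i) \le c$; taking $a = f^*(a_i)$ and $b = g^*(b_i)$ in the defining join for $R$ then shows $f_* f^*(a_i) \oplus g_* g^*(b_i) \le R(c)$. Finally the units $\id \le f_* f^*$ and $\id \le g_* g^*$ give $a_i \oplus b_i \le f_* f^*(a_i) \oplus g_* g^*(b_i) \le R(c)$, and joining over $i$ yields $w \le R(c)$.

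The argument is essentially bookkeeping with the unit--counit inequalities of the two component adjunctions, so I do not expect any genuinely hard step. The only point requiring a little care is the converse direction, where one must pass from $w$ to its generating basic opens and observe that a join of basic opens lying below $c$ forces each summand below $c$; everything else reduces to monotonicity of $\oplus$ and the triangle inequalities $f^* f_* \le \id \le f_* f^*$ (and likewise for $g$), all of which are constructively valid.
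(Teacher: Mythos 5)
Your proof is correct. The paper itself does not prove this lemma -- it simply cites \cite{banaschewski1999completeness} with the remark that the proof is not difficult -- and your direct verification of the Galois connection $h^*(w) \le c \iff w \le R(c)$ is exactly the standard argument that reference has in mind: the counit inequalities $f^*f_* \le \id$, $g^*g_* \le \id$ give one direction after applying $h^*$ termwise, and the decomposition of $w$ into basic opens together with the unit inequalities gives the other. All the ingredients you use (that $\oplus$ is monotone in each argument, that basic opens generate the coproduct frame under joins, and that $h^*$ acts on them as $f^* \oplus g^*$) are constructively unproblematic, so your write-up is a complete and self-contained substitute for the citation.
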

\begin{proof}
 See \cite[Section 3, Lemma 2]{banaschewski1999completeness} (though the proof is not difficult).
\end{proof}

\section{Entourage uniformities}\label{sec:entourages}

Uniform locales are probably most easily understood via the entourage approach. An (open) entourage on a locale $X$ can be thought of an open approximate equality relation on $X$.

In order to have a good theory of open relations, we require a well-behaved notion of existential quantification for open subobjects and we will therefore restrict our attention to overt locales.
Since $\O\colon \OLoc\op \to \Frm$ is in particular a regular hyperdoctrine,
we may define composition of open relations $E,F \in \O(X \times X)$ in the internal logic by
\[F \circ E = \{(x,z) \colon X \times X \mid \exists y\colon X.\ (x,y) \in E \land (y,z) \in F\}.\]

We can now use this to mimic the usual definition of a uniform space.
\begin{definition}
 A \emph{pre-uniform locale} is an overt locale $X$ equipped with a filter $\E$ on $\O(X \times X)$ such that for each $E \in \E$,
 \begin{itemize}
  \item $\vdash_{x\colon X} (x,x) \in E$,
  \item there is an $E^\mathrm{o} \in \E$ such that $(x,y) \in E \dashv\vdash_{x, y\colon X} (y,x) \in E^\mathrm{o}$,
  \item there is an $F \in \E$ such that $F \circ F \le E$.
 \end{itemize}
 We call $\E$ a \emph{uniformity} and the elements of $\E$ \emph{entourages}. We say a set $\mathcal{B} \subseteq \E$ is a \emph{base} for the uniformity $\E$ if $\E$ is generated by $\mathcal{B}$ as an upset.
\end{definition}

This is not yet what is usually called a \emph{uniform} locale, since we have not assumed a compatibility condition between the uniformity $\E$ and the `topology' of $X$ and so it is possible for $X$ to have a finer topology than that induced by $\E$. In particular, $X$ could be a discrete locale in which case we recover the definition of a uniform space (but where we do not think of $X$ as being equipped with the usual uniform topology).
\begin{definition}
 We define the \emph{uniformly below relation} on the opens of a pre-uniform locale $(X,\E)$ by \[a\vartriangleleft b \iff \exists E \in \E.\ E \circ (a \oplus a) \le b \oplus b.\]
 Then a \emph{uniform locale} is a pre-uniform locale $(X, \E)$ such that every $b \in \O X$ can be expressed as \[b = \bigvee_{a \vartriangleleft b} a.\]
 In this case the uniformity $\E$ is said to be \emph{admissible}.
\end{definition}

The following lemma provides an intuitive way to understand the uniformly below relation using the internal logic.
\begin{lemma}\label{lem:uniformly_below_internal_logic}
 Let $E$ be an entourage. We have $E \circ (a \oplus a) \le b \oplus b$ if and only if \[y \in a \land (y,z) \in E \vdash_{y, z\colon X} z \in b\] in the internal logic.
 An equivalent expression is $\exists y\colon X.\ y \in a \land (y,z) \in E \vdash_{z\colon X} z \in b$.
\end{lemma}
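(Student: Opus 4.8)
The plan is to unfold both sides in the internal logic and reduce the whole statement to a single stripping of a hypothesis. First I would compute the composite: by the definition of $\circ$ and of the tensor, $E \circ (a \oplus a)$ is the open $\{(x,z)\colon X\times X \mid \exists y\colon X.\ x\in a \land y\in a \land (y,z)\in E\}$, and since $x\in a$ does not mention the bound variable $y$ it pulls out of the existential by Frobenius reciprocity, giving $\{(x,z)\mid x\in a \land \exists y\colon X.\ (y\in a \land (y,z)\in E)\}$. Writing $c$ for the open $\{z\mid \exists y\colon X.\ y\in a \land (y,z)\in E\}$, this reads $E\circ(a\oplus a) = a\oplus c$. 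At the same time, the equivalence of the two displayed forms of the right-hand side is precisely the adjunction $\exists^X\dashv\pi_2^*$ applied to a variable absent from the conclusion. Thus everything reduces to proving that $a\oplus c \le b\oplus b$ if and only if $c\le b$, where the latter is the second displayed sequent.

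For the backward implication I would assume $c\le b$, i.e.\ $z\in c \vdash_{z} z\in b$, and verify $a\oplus c\le b\oplus b$ by checking the two conjuncts of its conclusion $x\in b \land z\in b$ separately. The conjunct $z\in b$ is immediate from $c\le b$ after weakening in $x\in a$. For $x\in b$ I would use the reflexivity axiom of the entourage: $\vdash_{x\colon X}(x,x)\in E$ yields $x\in a \vdash_{x} \exists y\colon X.\ (y\in a \land (y,x)\in E)$, that is $a\le c$, and composing with $c\le b$ gives $x\in a\vdash_{x} x\in b$, which weakens to the first conjunct. This is the only place the entourage axioms (specifically reflexivity) are needed.

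The forward implication is the crux. Assuming $a\oplus c\le b\oplus b$ and retaining only the conjunct $z\in b$ of the conclusion gives $x\in a \land z\in c \vdash_{x,z} z\in b$, and the task is to delete the hypothesis $x\in a$ so as to obtain $z\in c \vdash_{z} z\in b$. The subtlety is that $a\oplus c\le b\oplus b$ does \emph{not} in general force $c\le b$; what saves us is that $c$ is itself built out of $a$, so any proof of $z\in c$ already supplies a point of $a$. Concretely I would substitute the existential witness into the first coordinate: renaming the free variable, the assumed sequent gives $y\in a \land z\in c \vdash_{y,z} z\in b$, while $\exists$-introduction (since $z\in c$ is by definition $\exists y\colon X.\ y\in a \land (y,z)\in E$) gives $y\in a \land (y,z)\in E \vdash_{y,z} z\in c$; cutting these together yields $y\in a \land (y,z)\in E \vdash_{y,z} z\in b$, which is the first form of the right-hand side, and hence $c\le b$ by the adjunction once more. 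I expect the main obstacle to be formalising exactly this substitution — making precise the informal idea that the witness $y$ hidden inside ``$z\in c$'' may legitimately play the role of the discarded variable $x$.
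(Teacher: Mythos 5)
Your proof is correct and takes essentially the same route as the paper's: the backward direction rests on reflexivity of $E$ (your $a \le c$ is the paper's $x \in a \vdash_{x} x \in b$), and the forward direction hinges on the identical key move of substituting the existential witness $y$ for the discarded variable $x$. The Frobenius factoring $E \circ (a \oplus a) = a \oplus c$ is a tidy repackaging rather than a genuinely different argument.
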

\begin{proof}
 Let us start with the forward direction. In the internal logic the assumption means $\exists y\colon X.\ x \in a \land y \in a \land (y,z) \in E \vdash_{x, z\colon X} x \in b \land z \in b$. We can simply ignore the $x \in b$ part of the consequent and eliminate the existential quantification to obtain $x \in a \land y \in a \land (y,z) \in E \vdash_{x, y, z\colon X} z \in b$.
 But now substituting in $y$ for $x$ yields the desired sequent.
 
 For the converse, note that the assumption implies $x \in a \vdash_{x\colon X} x \in b$ by reflexivity (taking $z = y = x$). Then this together with the assumption again easily gives the desired statement.
 Finally, the second form easily seen to be equivalent to the first by the rules for existential quantification.
\end{proof}

The uniformly below relation satisfies many of the axioms of a \emph{strong inclusion} on $\O X$ (see \cite{banaschewski1990compactification}).
\begin{lemma}\label{lem:uniformly_below_properties}
 Let $(X, \E)$ be a pre-uniform locale. Then the uniformly below relation satisfies:
 \begin{enumerate}
  \item ${\le} \circ {\vartriangleleft} \circ {\le} \subseteq {\vartriangleleft} \subseteq {\le}$,
  \item ${\vartriangleleft}$ is a sublattice of $\O X \times \O X$,
  \item ${\vartriangleleft}$ is interpolative --- that is, ${\vartriangleleft} \subseteq {\vartriangleleft} \circ {\vartriangleleft}$.
 \end{enumerate}
\end{lemma}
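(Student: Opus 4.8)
The plan is to reduce all three parts to the internal-logic reformulation supplied by \cref{lem:uniformly_below_internal_logic}, so that $a \vartriangleleft b$ becomes the assertion that there is some $E \in \E$ with $y \in a \land (y,z) \in E \vdash_{y,z\colon X} z \in b$. Phrased this way, the first two parts are essentially bookkeeping in geometric logic, while the real content sits in the interpolation property (iii), for which the splitting axiom $F \circ F \le E$ is indispensable.

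For part (i), to see $\vartriangleleft \subseteq {\le}$ I would take a witness $E$ for $a \vartriangleleft b$ and substitute $z = y$ in the defining sequent; the hypothesis $(y,y) \in E$ then holds by the reflexivity axiom $\vdash_{x\colon X} (x,x) \in E$, leaving $y \in a \vdash_{y\colon X} y \in b$, which is exactly $a \le b$. The inclusion ${\le} \circ {\vartriangleleft} \circ {\le} \subseteq {\vartriangleleft}$ is immediate from monotonicity: if $a' \le a \vartriangleleft b \le b'$ with witness $E$, then the same $E$ witnesses $a' \vartriangleleft b'$, since the defining sequent only weakens when $a$ is shrunk and $b$ enlarged.

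For part (ii), the key observation is that $\E$ is a filter, so given witnesses $E$ for $a \vartriangleleft b$ and $F$ for $c \vartriangleleft d$, the meet $E \wedge F$ again lies in $\E$ and serves as a common witness below both. With this single entourage in hand, $a \wedge c \vartriangleleft b \wedge d$ follows by conjoining the two consequents, and $a \vee c \vartriangleleft b \vee d$ follows by case analysis on the disjunction $y \in a \lor y \in c$, sending each case into the corresponding disjunct of $z \in b \lor z \in d$. I would also record that $0 \vartriangleleft 0$ and $1 \vartriangleleft 1$ (using any entourage, which exists since $\E$ is a filter), so that $\vartriangleleft$ is nonempty and hence genuinely a sublattice, containing the bottom and top of $\O X \times \O X$.

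The heart of the argument is part (iii), and this is the step I expect to be the main obstacle. Starting from $a \vartriangleleft b$ with witness $E$, I would invoke the third uniformity axiom to pick $F \in \E$ with $F \circ F \le E$, and propose as interpolant the ``$F$-ball around $a$'', namely the open $c = \exists y\colon X.\ (y \in a \land (y,z) \in F)$ in the variable $z$ (formally $\exists^X(\pi_1^*(a) \wedge F)$). That $a \vartriangleleft c$ holds with witness $F$ is immediate, since the definition of $c$ witnesses $z \in c$ directly from $y \in a \land (y,z) \in F$. The delicate half is $c \vartriangleleft b$, again with witness $F$: unfolding $y \in c$ to $\exists w\colon X.\ w \in a \land (w,y) \in F$ and combining with $(y,z) \in F$ produces $(w,z) \in F \circ F$ by the definition of composition, whence $(w,z) \in E$ by the splitting inequality, so that the original hypothesis $w \in a \land (w,z) \in E \vdash z \in b$ closes the argument. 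The care needed here is purely in the manipulation of the existential quantifiers and the composite relation in the internal logic; it is worth noting that symmetry of entourages is not required for interpolation.
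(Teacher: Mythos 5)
Your proposal is correct and follows essentially the same route as the paper: part (i) via monotonicity and reflexivity (the latter exactly as in \cref{lem:uniformly_below_internal_logic}), part (ii) via the filter property of $\E$ to obtain a common witnessing entourage, and part (iii) via the ``$F$-ball'' interpolant $\exists^X(\pi_1^*(a)\wedge F)$ for $F$ with $F\circ F\le E$, which is precisely the paper's choice of $c$. The only cosmetic difference is that you verify the sublattice property directly for arbitrary pairs, whereas the paper treats the two one-sided cases and combines them with (i).
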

\begin{proof}
 i) For the first inclusion, suppose $a' \le a \vartriangleleft b \le b'$. Then $E \circ (a \oplus a) \le b \oplus b$ for some $E \in \E$,
 and hence we easily find $E \circ (a' \oplus a') \le E \circ (a \oplus a) \le b \oplus b \le b' \oplus b'$, so that $a' \vartriangleleft b'$,
 as required.
 
 We actually already proved the second inclusion in \cref{lem:uniformly_below_internal_logic} using reflexivity and the equivalent description of the uniformly below relation given there.
 
 ii) It is easy to see that $0 \vartriangleleft 0$ and $1 \vartriangleleft 1$. It then suffices to show $a \vartriangleleft b$ and $a \vartriangleleft b'$ implies $a \vartriangleleft b \wedge b'$ and that $a \vartriangleleft b$ and $a'\vartriangleleft b$ implies $a \vee a' \vartriangleleft b$.
 
 For the former, we have $E \circ (a \oplus a) \le b \oplus b$ and $E' \circ (a \oplus a) \le b' \oplus b'$ for some $E, E' \in \E$.
 Thus, $(E \wedge E') \circ (a \oplus a) \le ( E \circ (a \oplus a) ) \wedge (E' \circ (a \oplus a)) \le (b \wedge b') \oplus (b \wedge b')$.
 Since $E \wedge E' \in \E$, we may conclude that $a \vartriangleleft b \wedge b'$.
 
 For the latter, we have $E \circ (a \oplus a) \le b \oplus b$ and $E' \circ (a' \oplus a') \le b \oplus b$ for some $E, E' \in \E$. By taking the meet we may assume $E = E'$ without loss of generality.
 Thus, by \cref{lem:uniformly_below_internal_logic} we have $y \in a \land (y,z) \in E \vdash_{y,z\colon X} z \in b$ and $y \in a' \land (y,z) \in E \vdash_{y,z\colon X} z \in b$
 and so we may conclude $(y \in a \land (y,z) \in E) \lor (y \in a' \land (y,z) \in E) \vdash_{y,z\colon X} z \in b$. The equivalent form of $E \circ ((a \vee a') \oplus (a \vee a')) \le b \oplus b$ then follows by distributivity, as required.
 
 iii) Suppose $a \vartriangleleft b$. Then $E \circ (a \oplus a) \le b \oplus b$ for some $E \in \E$. Take $F \in \E$ such that $F \circ F \le E$ and set
      $c = \{ z\colon X \mid \exists y\colon X.\ y \in a \land (y,z) \in F \}$. Again by the characterisation in \cref{lem:uniformly_below_internal_logic}, we have $a \vartriangleleft c$.
      Moreover, $F \circ (c \oplus c) \le b \oplus b$ if and only if $\exists y\colon X.\ \exists x\colon X.\ x \in a \land (x,y) \in F \land (y,z) \in F \vdash_{z\colon X} z \in b$.
      This is equivalent to $\exists x\colon X.\ x \in a \land (x,z) \in F \circ F \vdash_{z\colon X} z \in b$, which holds since $F \circ F \circ (a \oplus a) \le E \circ (a \oplus a) \le b \oplus b$.
      Thus, $c \vartriangleleft b$ and $\vartriangleleft$ interpolates.
\end{proof}

Morphisms of uniform locales can be defined straightforwardly.
\begin{definition}
 A \emph{morphism of (pre-)uniform locales} $f\colon (X, \E) \to (Y,\F)$ is a morphism of locales $f\colon X \to Y$ such that $(f \times f)^*(F) \in \E$ for all $F \in \F$.
\end{definition}

\begin{lemma}
 If $f\colon (X, \E) \to (Y,\F)$ is a morphism of pre-uniform locales, then $f^*$ preserves the uniformly below relation.
\end{lemma}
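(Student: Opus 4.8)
The plan is to use the entourage $E = (f \times f)^*(F)$, which is forced on us: the only entourages we are handed in $\O(X \times X)$ that relate to $\F$ are the pullbacks of elements of $\F$, and these lie in $\E$ precisely by the defining condition on a morphism of pre-uniform locales. So, given $a \vartriangleleft b$ in $\O Y$ witnessed by some $F \in \F$ with $F \circ (a \oplus a) \le b \oplus b$, I would set $E = (f \times f)^*(F) \in \E$ and aim to show that $E$ witnesses $f^*(a) \vartriangleleft f^*(b)$, i.e.\ that $E \circ (f^*a \oplus f^*a) \le f^*b \oplus f^*b$.

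The cleanest route is to pass through the internal-logic characterisation of \cref{lem:uniformly_below_internal_logic} rather than manipulating the composite relation directly. First I would rewrite the hypothesis $F \circ (a \oplus a) \le b \oplus b$ in its equivalent sequent form $y \in a \land (y,z) \in F \vdash_{y,z\colon Y} z \in b$. Unfolding the membership predicates, this sequent is literally the inequality $\pi_1^*(a) \wedge F \le \pi_2^*(b)$ in $\O(Y \times Y)$, where $\pi_1, \pi_2 \colon Y \times Y \to Y$ are the projections. The point of using this quantifier-free form (as opposed to the $\exists y$ form) is that it is just a single inequality between opens, so transporting it along $f$ will require nothing more than monotonicity and finite-meet preservation.

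Next I would apply the frame homomorphism $(f \times f)^*$ to this inequality. Since $(f \times f)^*$ preserves $\wedge$ and order, I immediately obtain $(f \times f)^*\pi_1^*(a) \wedge (f \times f)^*(F) \le (f \times f)^*\pi_2^*(b)$. To identify the outer terms I would use the equalities of maps $\pi_i \circ (f \times f) = f \circ \pi_i$, which upon applying the (contravariant) functor $\O(-)$ give $(f \times f)^* \pi_i^* = \pi_i^* f^*$. Hence the pulled-back predicates are exactly $y \in f^*a$, $(y,z) \in E$ and $z \in f^*b$, and the inequality becomes $\pi_1^*(f^*a) \wedge E \le \pi_2^*(f^*b)$ in $\O(X \times X)$. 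Reading this back as a sequent and invoking \cref{lem:uniformly_below_internal_logic} in the reverse direction (now for the entourage $E \in \E$ on $X$) yields $E \circ (f^*a \oplus f^*a) \le f^*b \oplus f^*b$, so that $f^*a \vartriangleleft f^*b$, as required.

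I do not expect a serious obstacle here; the only thing needing genuine care is the compatibility identity $(f \times f)^* \pi_i^* = \pi_i^* f^*$, which is simply functoriality of $\O(-)$ applied to $\pi_i \circ (f \times f) = f \circ \pi_i$. Conceptually this is an instance of the fact that $f$ induces a morphism of geometric hyperdoctrines, under which substitution preserves geometric sequents; but because the sequent we are transporting is quantifier-free once the memberships are unfolded, no appeal to the Beck--Chevalley condition or to preservation of existential quantification is needed, and the argument reduces to the monotonicity and $\wedge$-preservation of the frame map $(f \times f)^*$. This is precisely why it is worth using the first (existential-free) form of the characterisation in \cref{lem:uniformly_below_internal_logic}.
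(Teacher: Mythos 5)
Your proof is correct, and it takes a noticeably different --- and arguably cleaner --- route than the paper's. The paper does not pass through \cref{lem:uniformly_below_internal_logic} at all: it applies $(f^* \oplus f^*)$ directly to the inequality $F \circ (a \oplus a) \le b \oplus b$ and must then supply the lax compatibility $(f \times f)^*(F) \circ (f^*(a) \oplus f^*(a)) \le (f \times f)^*(F \circ (a \oplus a))$, which it proves in the internal logic by weakening an existential witness $y'\colon X$ to the witness $f(y')\colon Y$. You instead convert the hypothesis into the quantifier-free sequent $y \in a \land (y,z) \in F \vdash_{y,z\colon Y} z \in b$, i.e.\ the single inequality $\pi_1^*(a) \wedge F \le \pi_2^*(b)$ in $\O(Y \times Y)$, transport it along the frame homomorphism $(f \times f)^*$ using only preservation of finite meets together with $(f \times f)^*\pi_i^* = \pi_i^* f^*$, and then convert back. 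Both arguments use the same witnessing entourage $E = (f \times f)^*(F)$; what your reformulation buys is that no reasoning about existential quantification (hence no appeal to the open-projection/Frobenius structure beyond plain substitution) is needed. What the paper's route buys is that it isolates the generally useful fact that pullback of open relations laxly preserves relational composition, which it explicitly reuses (``as we saw before'') in the proof that $\PreUnifLoc$ is topological over $\OLoc$.
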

\begin{proof}
 Suppose $a \vartriangleleft b$ in $Y$. Then there is an $F \in \F$ such that $F \circ (a \oplus a) \le b \oplus b$. Applying $f^* \oplus f^*$ we have $(f^* \oplus f^*)(F \circ (a \oplus a)) \le f^*(b) \oplus f^*(b)$.
 Note that in the internal logic $(x, z) \in (f^* \oplus f^*)(F \circ (a \oplus a))$ holds if and only if $\exists y\colon Y.\ (f(x), y) \in a \oplus a \land (y,f(z)) \in F$.
 This is implied by $\exists y'\colon X.\ (f(x), f(y')) \in a \oplus a \land (f(y'),f(z)) \in F$ and hence $(f^* \oplus f^*)(F) \circ (f^*(a) \oplus f^*(a)) \le (f^* \oplus f^*)(F \circ (a \oplus a)) \le f^*(b) \oplus f^*(b)$. Therefore, since $(f^* \oplus f^*)(F) \in \E$ we have $f^*(a) \vartriangleleft f^*(b)$.
\end{proof}

\begin{proposition}
 The category $\PreUnifLoc$ of pre-uniform locales is topological over $\OLoc$.
\end{proposition}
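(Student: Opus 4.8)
The plan is to verify directly that the forgetful functor $U\colon \PreUnifLoc \to \OLoc$, sending $(X,\E)\mapsto X$, admits a unique $U$-initial lift for every (possibly large) $U$-structured source. So I would begin with an overt locale $X$ together with a family of locale morphisms $f_i\colon X \to Y_i$ into pre-uniform locales $(Y_i,\F_i)$, and exhibit the initial uniformity on $X$. The natural candidate is the uniformity $\E$ generated by the set $\mathcal{B} = \{(f_i\times f_i)^*(F)\mid i,\ F\in\F_i\}$, that is, the upward closure of the finite meets of elements of $\mathcal{B}$.

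The first task is to check that this filter really is a uniformity. Reflexivity passes to each pullback (since $\vdash_{y}(y,y)\in F$ gives $\vdash_{x}(x,x)\in(f_i\times f_i)^*F$) and hence to finite meets and upsets; symmetry passes along the pullback via the identity $((f_i\times f_i)^*F)^{\mathrm{o}} = (f_i\times f_i)^*(F^{\mathrm{o}})$. The only step requiring care is the composition axiom across different indices: given a finite meet $E = \bigwedge_k (f_{i_k}\times f_{i_k})^*(F_k)$, I would choose $G_k\in\F_{i_k}$ with $G_k\circ G_k\le F_k$, set $H=\bigwedge_k (f_{i_k}\times f_{i_k})^*(G_k)\in\E$, and use the inequality $(f\times f)^*(G)\circ(f\times f)^*(G)\le (f\times f)^*(G\circ G)$ --- proved by the same internal-logic computation used to show that $f^*$ preserves $\vartriangleleft$ --- together with monotonicity of $\circ$ to conclude $H\circ H\le E$.

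Next, each $f_i\colon(X,\E)\to(Y_i,\F_i)$ is a morphism by construction, since $(f_i\times f_i)^*(F)\in\mathcal{B}\subseteq\E$. For initiality, I would take a pre-uniform locale $(Z,\mathcal G)$ and a locale map $g\colon Z\to X$ such that every composite $f_i\circ g$ is a morphism of pre-uniform locales, and show that $g\colon(Z,\mathcal G)\to(X,\E)$ is itself one, i.e.\ $(g\times g)^*(E)\in\mathcal G$ for all $E\in\E$. Since $\mathcal G$ is a filter and $(g\times g)^*$ is a frame map (hence preserves finite meets and order), it suffices to treat the generators: using $(g\times g)^*\circ(f_i\times f_i)^* = ((f_i g)\times(f_i g))^*$ and the assumption that $f_i g$ is a morphism gives $(g\times g)^*\bigl((f_i\times f_i)^*(F)\bigr)\in\mathcal G$, after which closure under finite meets and upsets finishes the argument.

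Finally, for uniqueness of the lift I would observe that any uniformity on $X$ making all the $f_i$ morphisms must contain $\mathcal B$ and hence $\E$, so $\E$ is the least such; applying initiality to $\id_X$ then shows that any other initial lift coincides with $\E$. I expect the main obstacle to be precisely the composition axiom for the generated uniformity, where meets ranging over distinct indices must be made to interact with composition of relations; everything else reduces to the internal-logic manipulations already established for morphisms. One should also note that the construction is unaffected by the source being large, since each finite meet involves only finitely many generators and $\E$ is therefore a genuine subset of $\O(X\times X)$.
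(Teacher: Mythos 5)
Your proposal is correct and follows essentially the same route as the paper: the initial lift is the filter generated by the pullbacks $(f_i\times f_i)^*(F)$, with the composition axiom handled via $(f\times f)^*(G)\circ(f\times f)^*(G)\le(f\times f)^*(G\circ G)$ and monotonicity of $\circ$ across a finite meet of generators. The only difference is that you spell out the initiality and uniqueness checks that the paper dismisses as routine.
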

\begin{proof}
 Consider a family of pre-uniform locales $(X_i, \E_i)$ and locale morphisms $f_i\colon Y \to X_i$ where $i \in I$. We claim that there is an initial lift $(Y, \E)$.
 Here we define $\E$ to be the filter generated by $(f_i \times f_i)^*(E_i)$ for each $E_i \in \E_i$ and each $i \in I$.
 
 Note that this is indeed a uniformity. First observe that $(f_i \times f_i)^*(E_i)$ always satisfies the reflexivity condition and that this condition is stable under finite meets. It is also easy to see that the symmetry condition is inherited from the $E_i$ uniformities. Finally, consider a general entourage $E = \bigwedge_{j \in J} (f_j \times f_j)^*(E_j) \in \E$ where $J$ is a finite subset of $I$.
 For each $E_j$ there is a $F_j \in \E_j$ such that $F_j \circ F_j \le E_j$. Then as we saw before, $(f_j \times f_j)^*(F_j) \circ (f_j \times f_j)^*(F_j) \le (f_j \times f_j)^*(F_j \circ F_j) \le (f_j \times f_j)^*(E_j)$,
 and therefore setting $F = \bigwedge_{j \in J} (f_j \times f_j)^*(F_j) \in \E$ we have $F \circ F \le E$. Thus, the final axiom of an entourage is satisfied.
 
 It is now routine to check that $(Y, \E)$ satisfies the necessary property in order to be a universal lift.
\end{proof}

By a general result, limits in $\PreUnifLoc$ can be computed as the limits of the underlying overt locales equipped with the initial uniformity with respect to the limiting cone.
Moreover, the existence of initial structures implies the existence of final structures.
Then colimits can be computed dually and we obtain the following corollary.

\begin{corollary}
 The category $\PreUnifLoc$ is complete and cocomplete.
\end{corollary}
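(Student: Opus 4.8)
The plan is to obtain this as an immediate consequence of the preceding proposition together with the general theory of topological functors. Since the forgetful functor $U\colon \PreUnifLoc \to \OLoc$ is topological and $\OLoc$ is complete and cocomplete (as recalled in the Background section), it suffices to invoke the standard fact that a topological functor lifts all limits and colimits that exist in its codomain; see \cite{JoyOfCats}. So the real content is already in hand, and the proof is mostly a matter of assembling the pieces correctly.

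Concretely, for the completeness half I would argue as follows. Given a small diagram $D\colon \mathcal{J} \to \PreUnifLoc$, first form the limit $L = \lim (U \circ D)$ in $\OLoc$ with its limiting cone $(\lambda_j \colon L \to U D_j)_{j}$. By topologicalness there is a unique initial lift of this cone, namely a pre-uniformity $\E$ on $L$ --- explicitly the filter generated by the $(\lambda_j \times \lambda_j)^*$-images of the entourages of the $D_j$, exactly as constructed in the proof of the proposition --- making every $\lambda_j$ a morphism of pre-uniform locales and universal among such cones. The defining property of an initial lift then shows that $((L,\E),(\lambda_j))$ is a limit cone in $\PreUnifLoc$, so $\PreUnifLoc$ is complete.

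For cocompleteness I would use the fact that a topological functor is automatically co-topological: the existence of initial lifts for all (possibly large) structured sources forces the existence of final lifts as well, by the self-duality of the notion. Hence colimits are computed dually --- one forms the colimit of $U \circ D$ in $\OLoc$ and equips it with the final pre-uniformity with respect to the colimiting cocone. Since $\OLoc$ is cocomplete, this yields all colimits in $\PreUnifLoc$.

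There is no genuinely hard step here; the substance lies entirely in the preceding proposition and in the background fact that $\OLoc$ is complete and cocomplete. The only point deserving a word of care is the passage from limits to colimits, which relies on the self-duality of topological functors rather than on any separate construction; this is where I would lean on the cited account \cite{JoyOfCats} rather than reprove the lifting of final structures by hand.
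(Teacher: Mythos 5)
Your proposal is correct and follows essentially the same route as the paper: the corollary is deduced from the topologicality of $\PreUnifLoc \to \OLoc$ together with the completeness and cocompleteness of $\OLoc$, with limits computed via initial lifts and colimits obtained dually via final lifts using the self-duality of topological functors. No gaps.
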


The following result allows the category $\UnifLoc$ of uniform locales to inherit some of the good behaviour of the category of pre-uniform locales.
We state it here for convenience, but since it will be easier to prove with covering uniformities, we postpone the proof until \cref{sec:covering_uniformities}.

\begin{proposition}\label{prop:uniform_reflection}
 Uniform locales form a reflective subcategory of the category of pre-uniform locales.
\end{proposition}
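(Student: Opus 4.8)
The plan is to construct the reflector explicitly by coarsening the topology of a pre-uniform locale down to the opens that the uniformity can ``see''. Concretely, I would study the operator $\sigma\colon \O X \to \O X$ given by $\sigma(b) = \bigvee_{a \vartriangleleft b} a$. Using \cref{lem:uniformly_below_properties}, interpolativity of $\vartriangleleft$ gives idempotence of $\sigma$, the inclusion ${\vartriangleleft}\subseteq{\le}$ gives $\sigma(b)\le b$, and monotonicity of $\vartriangleleft$ gives monotonicity of $\sigma$, so that $\sigma$ is an interior operator. Its fixed-point set $S = \{b : \sigma(b)=b\}$ is then automatically closed under arbitrary joins, while the fact that $\vartriangleleft$ is a sublattice yields $\sigma(b)\wedge\sigma(b') \le \sigma(b\wedge b')$ by distributivity, so $S$ is also closed under finite meets. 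Hence $S$ is a subframe of $\O X$. I let $X'$ be the locale with $\O X' = S$ and $\eta\colon X \to X'$ the locale morphism whose frame map $\eta^* = \iota$ is the subframe inclusion. Since $\iota$ is injective, $\eta$ is a localic surjection, so $X'$---being the image of the overt locale $X$---is again overt, as required for a pre-uniform locale.

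Next I would transport the uniformity by setting $\E' = \{D \in \O(X'\times X') : (\eta\times\eta)^*(D) \in \E\}$. As $(\eta\times\eta)^*$ is a frame homomorphism and $\E$ a filter, $\E'$ is a filter; reflexivity and symmetry transfer back along $\eta$ using that $(\eta\times\eta)\circ\Delta_X = \Delta_{X'}\circ\eta$ and that $\E$ is closed under the coordinate swap, together with injectivity of $\eta^*$. With this definition $\eta$ is tautologically a morphism of pre-uniform locales. To see that $(X',\E')$ is actually \emph{uniform}, I would establish that for $b \in S$ one has $b = \sigma(b) = \bigvee\{a \in S : a \vartriangleleft b\}$---each $a \vartriangleleft b$ being dominated by $\sigma(c)\in S$ for an interpolant $a\vartriangleleft c\vartriangleleft b$---and that the uniformly-below relation computed from $\E'$ agrees on $S$ with the restriction of $\vartriangleleft$.

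The universal property then falls out cleanly. Given a uniform locale $(Y,\F)$ and a pre-uniform morphism $f\colon(X,\E)\to(Y,\F)$, the earlier lemma that $f^*$ preserves the uniformly-below relation shows $f^*(b) = \bigvee_{a\vartriangleleft b} f^*(a)$ with every $f^*(a)\vartriangleleft f^*(b)$, whence $f^*(b) = \sigma(f^*(b))\in S$. Thus $f^*$ factors through $\iota$ as $f^* = \iota\circ\bar f^*$ for a unique frame map $\bar f^*\colon \O Y \to S$, giving a unique locale morphism $\bar f\colon X'\to Y$ with $\bar f\circ\eta = f$; moreover $\bar f$ is automatically pre-uniform, since $(\eta\times\eta)^*(\bar f\times\bar f)^*(F) = (f\times f)^*(F)\in\E$ for every $F \in \F$. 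This exhibits $\eta$ as the unit of the desired reflection.

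The step I expect to be the main obstacle is the verification that $\E'$ really satisfies the composition axiom---equivalently, that $\vartriangleleft$ restricted to $S$ is the genuine uniformly-below relation of $(X',\E')$. The difficulty is that composition of open relations does not commute with the pullback $(\eta\times\eta)^*$ (the relevant Beck--Chevalley square is not a pullback), so a refinement $F$ of an entourage on $X$ cannot simply be formally ``descended'' to an entourage on $X'\times X'$. This is precisely the point that becomes transparent once uniformities are presented via covers, where star-refinement of a cover consisting of opens in $S$ stays within the subframe $S$; accordingly I would---as the paper does---carry out this one verification in the covering picture of \cref{sec:covering_uniformities}.
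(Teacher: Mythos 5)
Your proposal is correct and follows essentially the same route as the paper: the conucleus $b \mapsto \bigvee_{a \vartriangleleft b} a$, its fixed-point subframe, the final (pre-)uniformity along the subframe inclusion, and the factorisation of $f^*$ through the subframe via preservation of $\vartriangleleft$. The single step you defer --- verifying the star-refinement axiom for the transported uniformity --- is indeed where the paper does its real work, showing that covers of the form ${\downarrow}\{\bigvee_{u \vartriangleleft v} u \mid v \in V\}$ (whose generators lie in the subframe by idempotence) are cofinal among uniform covers, by squeezing them between a star-refinement $W$ of $V$ and $V$ itself; your diagnosis that this is exactly the point where the covering picture is needed matches the paper's.
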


\begin{corollary}
 The category $\UnifLoc$ is complete and cocomplete.
\end{corollary}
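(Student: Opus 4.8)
The plan is to deduce this formally from the two facts already in hand: that $\PreUnifLoc$ is complete and cocomplete, and that $\UnifLoc$ is a reflective subcategory of it by \cref{prop:uniform_reflection}. The one point worth checking explicitly is that $\UnifLoc$ is a \emph{full} subcategory of $\PreUnifLoc$, but this is immediate from the definition of morphisms: a map of (pre-)uniform locales is in both cases just a locale morphism $f$ with $(f \times f)^*(F) \in \E$ for every entourage $F$, with no further condition imposed between uniform objects. Thus the reflector is left adjoint to a full and faithful inclusion $\iota \colon \UnifLoc \hookrightarrow \PreUnifLoc$, and the standard theory of reflective subcategories (see \cite{JoyOfCats}) applies.

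For completeness, I would note that $\iota$, being a right adjoint, preserves limits, and that a full reflective subcategory is closed under limits formed in the ambient category. Concretely, given a small diagram in $\UnifLoc$, its limit computed in $\PreUnifLoc$ (which exists since $\PreUnifLoc$ is complete) already carries an admissible uniformity and serves as the limit in $\UnifLoc$; the required universal property transfers because $\iota$ is full and faithful. Hence $\UnifLoc$ is complete, with limits preserved and reflected by the inclusion.

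For cocompleteness, the colimit of a diagram $D$ in $\UnifLoc$ is obtained by forming the colimit $C$ of $\iota D$ in $\PreUnifLoc$ and then applying the reflector $L$: the object $L(C)$, equipped with the reflected cocone, is the colimit in $\UnifLoc$. This is the familiar construction of colimits in a reflective subcategory. I expect no genuine obstacle here, since all the real content lies in the reflectivity established in \cref{prop:uniform_reflection}; what remains is a purely formal consequence of that proposition together with the completeness and cocompleteness of $\PreUnifLoc$.
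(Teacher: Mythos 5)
Your argument is correct and is exactly the route the paper intends: the corollary is stated as an immediate formal consequence of the completeness and cocompleteness of $\PreUnifLoc$ together with \cref{prop:uniform_reflection}, via the standard facts that a full reflective subcategory is closed under limits and that colimits are obtained by reflecting those of the ambient category. Your explicit check that the inclusion is full (since morphisms of uniform and pre-uniform locales are defined identically) is a worthwhile point that the paper leaves tacit.
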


We can also use this approach to define the notion of a uniform embedding.
\begin{definition}
 A morphism of (pre-)uniform locales is a \emph{uniform embedding} if it is an initial locale embedding --- that is, if it is initial with respect to the topological functor $\PreUnifLoc \to \OLoc$ and its underlying locale morphism is an extremal monomorphism.
 By general principles, these maps are precisely the extremal monomorphisms in $\PreUnifLoc$.
\end{definition}

Explicitly, we have that if $(X,\E)$ is a pre-uniform locale and $i\colon S \hookrightarrow X$ is an overt sublocale, then $i$ is a uniform embedding if and only if the $S$ is equipped with the uniformity $\{ (i \times i)^*(E) \mid E \in \E\}$. Moreover, it is easy to see that in this case $S$ is a uniform locale whenever $X$ is.

\section{Covering uniformities}\label{sec:covering_uniformities}

In pointfree topology it is more common to describe uniform locales via uniform covers and this is the approach taken by Johnstone in \cite{johnstone1991constructive}.
However, as pointed out in \cite{manuell2020congruence}, all definitions of uniform spaces or locales via covers that I have been able to find \cite{tukey1941convergence,isbell1964uniform,willard1970general,frith1987thesis,johnstone1991constructive,picado2012book} are incorrect. Classically the error is a minor one, since it only manifests for uniformities on the empty set (or the initial locale), but constructively it is more important. I explain this error after the definitions below.

\begin{definition}
 A \emph{covering downset} $C$ on a locale $X$ is a downset on $\O X$ such that $\bigvee C = 1$.
 We say a covering downset is \emph{strong} if it is generated by its positive elements.
\end{definition}

\begin{definition}
 Given a locale $X$, an overt sublocale $V$ of $X$ and a covering downset $U \subseteq \O X$, the \emph{star} of $V$ with respect to $U$ is defined by $\st(V,U) = \bigvee \{u \in U \mid V \between u\}$.
 In particular, this yields a notion of star of an open $a \in \O X$ when $X$ is overt.
 For a covering downset $U$ on an overt locale we set $U^\star = {\downarrow}\{ \st(u, U) \mid u \in U \}$.
\end{definition}

\begin{lemma}\label{lem:star_is_inflationary}
 Let $V$ and $U$ be as defined as above. Then $V \le \wkclo{V} \le \st(V,U)$ under the ordering of sublocales.
\end{lemma}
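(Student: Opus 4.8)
The statement splits into the two inequalities $V \le \wkclo{V}$ and $\wkclo{V} \le \st(V,U)$, where on the right I read $\st(V,U)$ as the open sublocale it determines. The first inequality I expect to be essentially immediate from the definition of the weak closure: the identity $V \to V$ is trivially strongly dense (as $a > 0 \implies \id^*(a) = a > 0$), so $V$ is itself a sublocale in which $V$ is strongly dense, and since $\wkclo{V}$ is by definition the \emph{largest} such sublocale we get $V \le \wkclo{V}$.

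For the second inequality I would work at the frame level, writing $\mathfrak{o}(a)$ for the open sublocale corresponding to an open $a$ and using the standard fact that a sublocale $W$ satisfies $W \le \mathfrak{o}(a)$ if and only if $a$ restricts to the top element along the inclusion $i_W\colon W \hookrightarrow X$, i.e. $i_W^*(a) = 1$ in $\O W$ (readily checked via nuclei, since $\mathfrak{o}(a)$ is the nucleus $b \mapsto a \heyting b$). Setting $W \coloneqq \wkclo{V}$, which is overt by the remarks on weak closure, the goal becomes $i_W^*(\st(V,U)) = 1$. Since $V$ is overt we have $V \between u \iff \wkclo{V} \between u$, so $\st(V,U) = \bigvee\{u \in U \mid W \between u\}$, and as $i_W^*$ preserves joins this gives $i_W^*(\st(V,U)) = \bigvee\{i_W^*(u) \mid W \between u\}$. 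On the other hand the cover condition $\bigvee U = 1$ yields $\bigvee_{u \in U} i_W^*(u) = i_W^*(1) = 1$, so I am reduced to showing that discarding the $u$ with $W \not\between u$ leaves the join unchanged.

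This last reduction is the one genuinely non-classical point and where I expect the overtness of $W$ to do the real work. Let $t \coloneqq \bigvee\{i_W^*(u') \mid W \between u'\}$. Because $W$ has a base of positive elements, each $i_W^*(u)$ equals the join of the positive opens $d \le i_W^*(u)$; for any such $d$, upward-closure of positivity gives $i_W^*(u) > 0$, that is $W \between u$, whence $i_W^*(u) \le t$ and in particular $d \le t$. Taking the join over these $d$ gives $i_W^*(u) \le t$ for every $u \in U$, so $1 = \bigvee_{u} i_W^*(u) \le t \le 1$ and thus $t = 1$, completing the argument. Classically this step is trivial --- one simply deletes the empty pieces --- but constructively one cannot decide whether $i_W^*(u)$ vanishes, and it is precisely the base of positive elements supplied by overtness, together with the identity $V \between u \iff \wkclo{V} \between u$, that lets the meeting pieces of the cover carry the whole join. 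I therefore anticipate the only delicate part of the write-up to be phrasing this positive-elements argument cleanly; the rest is bookkeeping with the characterisation of containment in an open sublocale.
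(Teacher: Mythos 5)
Your proposal is correct and follows essentially the same route as the paper: both use $V \between u \iff \wkclo{V} \between u$ to identify $\st(V,U)$ with $\st(\wkclo{V},U)$, restrict the cover $U$ along the sublocale inclusion, and invoke overtness to discard the non-meeting pieces of the resulting join without losing the top element. The only cosmetic difference is that the paper reduces to proving the generic inequality $V \le \st(V,U)$ and then applies it to $\wkclo{V}$, whereas you work with $W = \wkclo{V}$ directly; your expanded justification of the positive-elements step is exactly what the paper's phrase ``we may replace this join by one consisting only of positive elements'' abbreviates.
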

\begin{proof}
 First note that since $V \between a \iff \wkclo{V} \between a$, we have $\st(V, U) = \st(\wkclo{V},U)$
 and hence it is enough to prove $V \le \st(V, U)$.
 
 We write the frame quotient map associated to $V$ as $a \mapsto a \cap V$. To prove the result we will show $\st(V,U) \cap V = 1$ in $\O V$.
 
 Since $U$ is a cover, we have $\bigvee U = 1$ in $\O X$. Applying the quotient map then gives $\bigvee\{ u \cap V \mid u \in U\} = 1$ in $\O V$.
 Now since $V$ is overt, we may replace this join by a similar one consisting only of positive elements $\bigvee\{ u \cap V \mid u \in U, u \cap V > 0\}$.
 But $u \cap V > 0$ in $V$ if and only if $V \between u$ in $X$ and hence this join is in turn equal to $\left(\bigvee\{ u \in U \mid V \between u\}\right) \cap V$,
 which is simply $\st(V,U) \cap V$. Thus, we have shown $\st(V,U) \cap V = 1$, as required.
\end{proof}

\begin{definition}
 A \emph{pre-uniform locale via covers} is an overt locale $X$ equipped with a filter $\U$ of covering downsets such that for every $U \in \U$ there exists a \emph{strong} $V \in \U$ such that $V^\star \subseteq U$.
\end{definition}

\begin{remark}
 Where the usual definitions of covering uniformities go wrong is that they omit the strength condition on $V$. Without it, $\{\{0\}\}$ and the powerset of $\{0\}$ would be distinct valid uniformities on the trivial locale in disagreement with the entourage approach.
 Johnstone \cite{johnstone1991constructive} notices that this strength condition is important and calls a uniformity proper if it satisfies it, but does not require it for every uniformity, nor does he require (pre-)uniform locales to be overt in general. (Johnstone also fails to require that a uniformity be inhabited, but I imagine this was just an mistake in the writeup.)
\end{remark}

Nonetheless, it is easy to modify an otherwise valid covering uniformity to ensure that the strength condition holds.
It is not hard to see that a collection $\mathcal{B}$ of covering downsets that is a base for a filter which satisfies every property of a covering uniformity aside from the strength condition can be made into a base for a true uniformity by simply replacing each $B \in \mathcal{B}$ with ${\downarrow}\{u \in B \mid u > 0\}$.

\begin{definition}
 We define the uniformly below relation on the opens of a pre-uniform locale via covers $(X,\U)$ by \[a\vartriangleleft b \iff \exists U \in \U.\ \st(a,U) \le b.\]
 Then as before, a \emph{uniform locale via covers} is a pre-uniform locale via covers $(X, \U)$ such that every $b \in \O X$ can be expressed as $b = \bigvee_{a \vartriangleleft b} a$.
\end{definition}

As is well-known classically, the definition of uniformities via covers is equivalent to the entourage approach discussed in \cref{sec:entourages}.

\begin{definition}
 A \emph{morphism of (pre-)uniform locales via covers} $f\colon (X, \U) \to (Y,\V)$ is a morphism of locales $f\colon X \to Y$ such that ${\downarrow} f^*[V] \in \U$ for all $V \in \V$.
\end{definition}

\begin{theorem}\label{thm:entourage_vs_cover_preuniformity}
 There is an isomorphism of categories between the category of pre-uniform locales via entourages and the category of pre-uniform locales via covers (which commutes with the forgetful functor into $\OLoc$).
\end{theorem}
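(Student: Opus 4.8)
The plan is to construct two functors that are the identity on the underlying overt locale and mutually inverse, so that the claimed isomorphism automatically commutes with the forgetful functors to $\OLoc$. Given an entourage uniformity $(X,\E)$, I associate to each $E \in \E$ the \emph{strong} covering downset ${\downarrow}\{a \in \O X \mid a > 0,\ a \oplus a \le E\}$ and let $\Phi(X,\E)$ carry the filter generated by these. Conversely, given a covering uniformity $(X,\U)$, I associate to each $U \in \U$ the open relation $E_U := \bigvee_{a \in U}(a \oplus a)$ and let $\Psi(X,\U)$ carry the filter generated by these. The two basic computational facts driving everything are that over the rectangle basis of $\O(X \times X)$ every open is a join of rectangles $u \oplus v$, and that the existential quantifier detects positivity (so an internal overlap point $\exists y.\, y \in b \land y \in b'$ is exactly the statement $b \between b'$).

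\textbf{Well-definedness.}
First I would check $\Phi$ lands in covering uniformities. That each $U_E$ is a (strong) covering downset follows from reflexivity: expanding $\Delta \le E$ over the rectangle basis gives $\bigvee\{u \wedge v \mid u \oplus v \le E\} = 1$ with $(u \wedge v) \oplus (u \wedge v) \le E$, and a routine overtness argument (as in the proof of \cref{lem:star_is_inflationary}) replaces these by positive opens to obtain a cover by positive small squares. For the star-refinement axiom, given $E$ I pick $F \in \E$ symmetric with $F \circ F \circ F \le E$ (obtained by halving twice, using $\Delta \le F$), take the strengthening $V$ of $U_F$, and verify $\st(a,V) \oplus \st(a,V) \le E$ by chaining three copies of $F$ across two positive overlaps; strength is what guarantees these overlaps are genuine instances of $\between$. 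For $\Psi$, reflexivity of $E_U$ reduces to $\bigvee U = 1$, symmetry is immediate from symmetry of the squares, and the composition axiom $E_V \circ E_V \le E_U$ uses a strong star-refinement $V$ with $V^\star \subseteq U$: an internal point $y \in b \wedge b'$ yields $b \between b'$, hence $b' \le \st(b,V) \in U$, and here strength of $V$ ensures the relevant $b$ is positive so that $b \le \st(b,V)$. Closure of the filter bases under meets uses that the common refinement of two covers is again a cover (frame distributivity) and that $E_{U \wedge U'} \le E_U \wedge E_{U'}$.

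\textbf{Morphisms.}
Since both functors fix the underlying locale map $f$, I only need the two morphism conditions to coincide. The key identity is $(f \times f)^*(a \oplus a) = f^*a \oplus f^*a$ together with join-preservation of $f^*$, which gives $(f \times f)^*(E_U) = E_{{\downarrow}f^*[U]}$. Thus ${\downarrow}f^*[V] \in \U$ implies $(f \times f)^*(E_V) \in \Psi(\U)$ and conversely, so the cover-morphism condition for $(\U,\V)$ matches the entourage-morphism condition for $(\Psi\U, \Psi\V)$; the statement for $\Phi$ follows dually.

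\textbf{Round trips, and the main obstacle.}
The substantive work is showing $\Psi\Phi = \id$ and $\Phi\Psi = \id$ on objects, as equalities of \emph{filters}. For $\Phi\Psi$, the inclusion $U_{E_U} \in \U$ is clean: a strong star-refinement $V$ with $V^\star \subseteq U$ satisfies $V \subseteq U$ and $V \subseteq U_{E_U}$, so $U_{E_U} \in \U$ by the upset property; conversely $U \subseteq U_{E_U}$ on positive generators is immediate. I expect the genuine obstacle to be the inclusion $\Psi\Phi \subseteq \id$, i.e.\ showing $E_{U_E} \in \E$. The point is that $E_{U_E} = \bigvee\{a \oplus a \mid a > 0,\ a \oplus a \le E\}$ is visibly below $E$ but one must exhibit an entourage \emph{below} it. The plan is to take $G \in \E$ symmetric with $G \circ G \le E$ and work over the positive rectangle basis of $\O(X \times X)$ (available since $X \times X$ is overt): for a positive rectangle $u \oplus v \le G$, positivity of $u$ and $v$ gives $u \oplus u \le (u \oplus v) \circ (v \oplus u) \le G \circ G \le E$ and likewise $v \oplus v \le E$, whence $(u \vee v) \oplus (u \vee v) \le E$ with $u \vee v > 0$, so $u \oplus v \le E_{U_E}$; joining over all such rectangles yields $G \le E_{U_E}$. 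This step is exactly where overtness (positive rectangle basis) and the strength condition are indispensable, and it is the heart of why the correspondence must be set up on overt locales with strong covers rather than naively.
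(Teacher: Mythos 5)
Your constructions are the same as the paper's ($\Phi = \widehat{(-)}$ and $\Psi = \overline{(-)}$), and your verification of the uniformity axioms, the functoriality of $\Psi$, and the round trip $\Psi\Phi = \id$ (including the $G \circ G \le E$ argument over positive rectangles for $E_{U_E} \in \E$) all match the paper's proof. However, there is a major gap: the functoriality of the entourage-to-cover direction, which you dismiss with ``the statement for $\Phi$ follows dually''. It does not. The identity $(f \times f)^*(E_U) = E_{{\downarrow}f^*[U]}$ has no dual, because $f^*$ neither preserves positivity nor reflects the condition ``$u \oplus u \le (f \times f)^*(F)$'' into ``$u \le f^*(v)$ with $v \oplus v \le F$''. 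Concretely, given $f$ with $(f \times f)^*(F) \in \E$ for all $F \in \F$, you must show that ${\downarrow}f^*[U_F]$ \emph{contains} some $U_E$ with $E \in \E$, i.e.\ for every positive $u$ with $u \oplus u \le E$ you must produce a positive $v$ with $v \oplus v \le F$ and $u \le f^*(v)$. This is the longest and most delicate step in the paper's proof: it takes $E = (f \times f)^*(F')$ with $F' \circ F' \circ F' \le F$ and sets $v = \st((\Sloc f)_!(u), \widehat{F'})$, using the image sublocale $(\Sloc f)_!(u)$ (positive because $u$ is) as a substitute for the missing left adjoint $f_!$. Nothing in your proposal supplies this idea; note also that the unproved ``and conversely'' in your morphism paragraph is, via the round trips, exactly this hard direction in disguise.

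There is a second, smaller gap in the round trip $\Phi\Psi = \id$. Your observation that a strong star-refinement $V$ of $U$ satisfies $V \subseteq U_{E_U}$ shows each generator $U_{E_U}$ of $\Phi\Psi\U$ lies in $\U$, i.e.\ $\Phi\Psi\U \subseteq \U$; but your ``converse'' $U \subseteq U_{E_U}$ points the same way and proves nothing new. For $\U \subseteq \Phi\Psi\U$ you need each $U \in \U$ to \emph{contain} a generator, and the paper gets this from the nontrivial inclusion $U_{E_V} \subseteq V^\star \subseteq U$: a positive $w$ with $w \oplus w \le E_V$ satisfies $w \between v'$ for some $v' \in V$ (as $w \between \bigvee V$), whence $w \le \st(v', V)$ by applying $(\pi_1)_!$ to $w \oplus (w \wedge v') \le \bigvee_{v \in V} v \oplus (v \wedge v')$. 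This step is absent from your proposal. The remainder — well-definedness of both assignments and the $\Psi\Phi$ round trip — is sound and essentially identical to the paper's argument.
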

\begin{proof}
 Let $(X, \U)$ be a pre-uniform locale via covers. We define an entourage uniformity $\overline{\U}$ on $X$ with basic entourages of the form $\overline{U} = \bigvee\{u \oplus u \mid u \in U\}$ for each strong covering downset $U \in \U$.
 It is easy to see that $\overline{U \cap V} \le \overline{U} \wedge \overline{V}$ and so this is indeed a filter base. We now confirm that the three axioms hold.
 
 First observe that if $\Delta\colon X \to X \times X$ is the diagonal map, $\Delta^*(\overline{U}) = \bigvee\{u \wedge u \mid u \in U\} = 1$, since $U$ is a cover. But this is precisely what the reflexivity axiom is saying in the internal logic.
 
 Next we note that each of these basic entourages is fixed under the automorphism of $X \times X$ sending $(x,y)$ to $(y,x)$ in the internal logic. It follows easily that the symmetry condition holds for the resulting filter of entourages.
 
 Finally, we require for each basic entourage $\overline{U}$ an entourage $F \in \overline{\U}$ such that $F \circ F \le \overline{U}$.
 We claim that if $V$ is a strong covering downset such that $V^\star \subseteq U$, then $\overline{V}$ is such an $F$.
 Suppose $(x,z) \in \overline{V} \circ \overline{V}$ in the internal logic. This means there is a $y\colon X$ such that $(x,y) \in \overline{V}$ and $(y,z) \in \overline{V}$.
 But $\overline{V} = \bigvee\{v \oplus v \mid v \in V\}$ and so we may assume $x,y \in v$ and $y,z \in v'$ for some $v, v' \in V$.
 Thus, we have reduced the claim to the proving sequent $x \in v \land z \in v' \land \exists y\colon X.\ y \in v \wedge v' \vdash_{x,z\colon X} (x,z) \in \overline{U}$. But this sequent just means that $v \oplus v' \le \overline{U}$ whenever $v \between v'$. In that case, $v,v' \le \st(v,V)$. So $v \oplus v' \le \st(v,V) \oplus \st(v,V) \le \bigvee_{w \in V} \st(w,V) \oplus \st(w,V) = \overline{V^\star} \le \overline{U}$ and the claim follows.
 
 We now show that the map $(X, \U) \mapsto (X, \overline{\U})$ is functorial. We need only show that if $f^*\colon \O Y \to \O X$ sends uniform covers to uniform covers, then it sends entourages to entourages.
 It is enough to show this for basic entourages. Suppose $\overline{V}$ is a basic entourage on $Y$. We have $(f \times f)^*(\overline{V}) = \bigvee\{f^*(v) \oplus f^*(v) \mid v \in V\} = \bigvee\{u \oplus u \mid u \in {\downarrow}f^*[V]\} = \overline{{\downarrow}f^*[V]}$, which is a (basic) entourage on $X$.
 
 Now let $(X, \E)$ be a pre-uniform locale (via entourages). We define a covering uniformity $\widehat{\E}$ with basic covering downsets of the form $\widehat{E} = {\downarrow}\{u \in \O X \mid u > 0,\, u \oplus u \le E \}$ for each $E \in \E$. As above, reflexivity gives that $1 = \bigvee \{ u \wedge u' \mid u \oplus u' \le E \} \le \bigvee \{ u \mid u \oplus u \le E \}$. Then by overtness, we may restrict this join to the positive elements and so $\widehat{E}$ is indeed a cover. Again, it is easy to see that $\widehat{E \wedge E'} \le \widehat{E} \cap \widehat{E'}$ and so $\widehat{\E}$ is a filter base.
 To see that $\widehat{\E}$ is a covering uniformity, it just remains to show the `star-refinement' axiom.
 
 Let $\widehat{E}$ be a basic covering downset in $\widehat{\E}$. We must find a strong covering downset $V$ in $\widehat{\E}$ such that $V^\star \subseteq \widehat{E}$.
 By using the final `transitivity' axiom of entourage uniformities twice we have that there is an $F \in \E$ such that $F \circ F \circ F \le E$.
 We claim that $\widehat{F}$ is a $V$ satisfying the above condition.
 
 To see this, first note that $\widehat{F}$ is strong by construction. To show $\widehat{F}^\star \subseteq \widehat{E}$, it is enough to show that for all $v \in \widehat{F}$,
 $\st(v, \widehat{F}) \le u$ for some positive $u$ such that $u \oplus u \le E$. In particular, we can show $\st(v, \widehat{F}) \oplus \st(v, \widehat{F}) \le E$. (We may restrict to $v > 0$ since $\widehat{F}$ is strong and then $\st(v, \widehat{F}) \ge v > 0$.)
 Further expanding the definitions, we have $\st(v, \widehat{F}) = \bigvee\{ v' \in \O X \mid v' > 0,\ v' \oplus v' \le F ,\, v \between v' \}$ and so the desired inequality reduces to $v' \oplus v'' \le E$ for all $v',v''$ such that $v' \oplus v',\, v'' \oplus v'' \le F$, $v \between v'$ and $v \between v''$. We now use the internal logic. Take $x \in v'$ and $y \in v''$. There exist $x' \in v \wedge v'$ and $y' \in v \wedge v''$. Now since $x, x' \in v'$, we have $(x, x') \in F$. Similarly, $(y', y) \in F$. But recall that $v \in \widehat{F}$, so that $v \oplus v \le F$. Thus since $x', y' \in v$, we also have $(x', y') \in F$.
 So by the definition of relational composition, we have $(x,y) \in F \circ F \circ F \le E$. Therefore, $\widehat{F}^\star \subseteq \widehat{E}$ and $\widehat{\E}$ is a covering uniformity.
 
 As before, we show that $(X, \E) \mapsto (X, \widehat{\E})$ is functorial by proving that $f^*\colon \O Y \to \O X$ sends uniform covers to uniform covers whenever it sends entourages to entourages.
 Let $\widehat{F}$ be a basic uniform covering downset on $Y$.
 To show that this is sent to a uniform cover we require that $\widehat{E} \subseteq {\downarrow}f^*[\widehat{F}]$ for some entourage $E$ on $X$. We will take $E = (f \times f)^*(F')$ for an entourage $F'$ on $Y$ to be chosen later.
 So we require ${\downarrow}\{u \mid u > 0,\, u \oplus u \le (f \times f)^*(F') \} \subseteq {\downarrow}\{f^*(v) \mid v > 0,\, v \oplus v \le F \}$.
 Explicitly, for a positive $u$ such that $u \oplus u \le (f \times f)^*(F')$, we require a positive $v$ such that $v \oplus v \le F$ and $u \le f^*(v)$.
 To find such a $v$ we might try to take $v$ as small as possible such that $u \le f^*(v)$. If $f^*$ had a left adjoint $f_!$ we could achieve this by taking $v = f_!(u)$.
 In general this is not possible. However, we can approximate this by taking image of $u$ as a sublocale (which always exists) and then using the star operation to find an open containing this.
 In this way we set $v = \st((\Sloc f)_!(u), \widehat{F'})$. Then $f^*(v) = f^*(\st((\Sloc f)_!(u), \widehat{F'})) = \st(u, {\downarrow}f^*[\widehat{F'}]) \ge u$, as desired.
 Also note that $(\Sloc f)_!(u)$ is positive as the image of the positive open $u$ and hence $v > 0$.
 It remains to show $v \oplus v \le F$.
 
 This can be shown in a similar way to the proof of the star-refinement axiom above.
 Expanding the definitions, we must show that given $w, w'$ such that $w \oplus w \le F'$ and $w' \oplus w' \le F'$ and such that $u \between f^*(w)$ and $u \between f^*(w')$, we can conclude $w \oplus w' \le F$.
 In the internal logic, we take $x \in w$ and $y \in w'$. Then $\exists x'\colon X.\ x' \in u \land f(x') \in w$ and $\exists y'\colon X.\ y' \in u \land f(y') \in w'$. Recall $u \oplus u \le (f \times f)^*(F')$ and so we have $(x, f(x')) \in w \oplus w \le F'$, $(f(x'), f(y')) \in F'$ and $(f(y'), y) \in F'$.
 Thus, $(x,y) \in F' \circ F' \circ F'$. Now we can choose $F'$ to satisfy $F'\circ F'\circ F' \le F$ and so we are done.
 
 We have thus constructed functors from the category of pre-uniform locales via covers to the pre-uniform locales via entourages and back. Moreover, it is clear that these commute with the forgetful functor into $\OLoc$.
 It remains to show that these are inverses and it is enough to show this on objects.
 
 Consider a covering uniformity $\U$ on $X$. Then $\widehat{\overline{\U}}$ has a base consisting of covering downsets of the form
 $\widehat{\overline{U}} = {\downarrow}\{v \in \O X \mid v > 0,\, v \oplus v \le \bigvee\{u \oplus u \mid u \in U\}\}$ for each uniform covering downset $U \in \U$.
 We may restrict to strong $U$, in which case it is clear that $U \subseteq \widehat{\overline{U}}$ and hence $\widehat{\overline{\U}} \subseteq \U$.
 
 On the other hand, suppose $v$ is one of the generators of $\widehat{\overline{U}}$. Then $v > 0$ so that $v \between 1 = \bigvee U$ and hence $v \between u'$ for some $u' \in U$.
 We then have $v \oplus (v \wedge u') \le \bigvee\{u \oplus (u \wedge u') \mid u \in U\}$
 and applying $(\pi_1)_!$ we find $v \le \bigvee\{u \in U \mid u \between u'\} = \st(u', U) \in U^\star$.
 Therefore, $\widehat{\overline{U}} \subseteq U^\star$. It follows that $\U \subseteq \widehat{\overline{\U}}$
 and hence $\U = \widehat{\overline{\U}}$, as required.
 
 Finally, suppose $\E$ is an entourage uniformity on $X$. Then $\widelinehat{\E}$ has a base of entourages of the form $\widelinehat{E} = \bigvee\{u \oplus u \mid u \in \O X,\, u > 0,\, u \oplus u \le E\}$ for each entourage $E \in \E$.
 It is clear that $\widelinehat{E} \le E$ and hence $\E \subseteq \widelinehat{\E}$.
 
 For the other direction, let $F$ be an entourage such that $F \circ F \le E$ and let $G = F \wedge F^\mathrm{o} \in \E$.
 Suppose $u \oplus u' \le G$, where we may assume $u, u' > 0$ without loss of generality. We have $u \oplus u' \le G \le F$ and $u' \oplus u \le G^\mathrm{o} \le F$.
 Now in the internal logic, consider $x, z \in u$. Since $u' > 0$, we have $\exists y\colon X.\ y \in u'$. Thus, $(x,y) \in u \oplus u' \le F$ and $(y,z) \in u' \oplus u \le F$.
 It follows that $(x,z) \in F \circ F \le E$. So we have shown $u \oplus u \le E$. Similarly, $u' \oplus u' \in E$ and hence $(u \vee u') \oplus (u \vee u') = u \oplus u \vee u \oplus u' \vee u' \oplus u \vee u' \oplus u' \le E$. Thus, $u \oplus u' \le (u \vee u') \oplus (u \vee u') \le \widelinehat{E}$ and we can conclude that $G \le \widelinehat{E}$. It follows that $\widelinehat{\E} \le \E$ and hence $\widelinehat{\E} = \E$, as required.
\end{proof}

\begin{remark}
 We can also describe uniform embeddings in terms of covers. If $(X, \U)$ and $(Y,\V)$ are pre-uniform locales and $i\colon X \hookrightarrow Y$ is a locale embedding, then $i$ is a uniform embedding if and only if the covers ${\downarrow}\{ u \in i^*[V] \mid u > 0 \}$ for $V \in \V$ give a base for $\U$.
\end{remark}

Now in order to relate the entourage and covering approaches to \emph{uniform} locales, we must understand the relationship between their uniformly below relations.
\begin{lemma}\label{lem:uniformly_below_relation_covers_vs_entourages}
 With the definitions from the previous theorem, if $E$ is an entourage and $E \circ (a \oplus a) \le b \oplus b$ then $\st(a, \widehat{E}) \le b$.
 If $U$ is a uniform covering downset and $\st(a, U) \le b$, then $\overline{U} \circ (a \oplus a) \le b \oplus b$.
 Therefore, the uniformly below relation defined from an entourage uniformity and that defined by its corresponding covering uniformity coincide.
\end{lemma}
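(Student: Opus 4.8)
The plan is to establish the two displayed implications separately and then read off the coincidence of the two uniformly below relations, the chief tools throughout being the internal-logic reformulation of \cref{lem:uniformly_below_internal_logic} together with the inflationarity of the star from \cref{lem:star_is_inflationary}.

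For the first implication I would reduce $\st(a,\widehat{E}) \le b$ to a statement about the positive generators of $\widehat{E}$. Since $\widehat{E}$ is the downset of $\{u \mid u>0,\ u \oplus u \le E\}$ and $\between$ is monotone, $\st(a,\widehat{E})$ is the join of those generators $u$ with $a \between u$, so it suffices to show $u \le b$ for each such $u$. This is a short internal-logic argument: from $a \between u$ we may introduce a witness $w$ with $w \in a \land w \in u$; given any $z \in u$, the hypothesis $u \oplus u \le E$ yields $(w,z) \in E$, and then $w \in a \land (w,z) \in E$ gives $z \in b$ by the assumption $E \circ (a \oplus a) \le b \oplus b$ in the form provided by \cref{lem:uniformly_below_internal_logic}. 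Hence $u \le b$, as required.

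For the second implication I would again invoke \cref{lem:uniformly_below_internal_logic}, which reduces $\overline{U}\circ(a\oplus a) \le b \oplus b$ to showing that the open defined by $\exists y\colon X.\ y \in a \land (y,z) \in \overline{U}$ lies below $b$. Expanding $\overline{U} = \bigvee\{u \oplus u \mid u \in U\}$ and distributing the existential quantifier over the join (via the Frobenius and Beck--Chevalley conditions of the hyperdoctrine), this antecedent open computes to $\bigvee_{u \in U}\bigl({!}^*(a \between u) \wedge u\bigr)$, so it is enough to prove ${!}^*(a \between u) \wedge u \le \st(a,U)$ for each $u \in U$; the hypothesis $\st(a,U) \le b$ then finishes the job. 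This last inequality is the main obstacle, since constructively $a \between u$ is merely a truth value and we cannot case-split on whether it holds. I would get around this using overtness of $X$: writing ${!}^*(a \between u) \wedge u$ as the join of the positive opens $w$ below it, each such $w$ satisfies $\top = \exists(w) \le \exists\bigl({!}^*(a \between u)\bigr) \le a \between u$, where the first equality records positivity of $w$, the middle step is monotonicity of $\exists$ (as $w \le {!}^*(a \between u)$), and the last is Frobenius reciprocity for the adjunction $\exists \dashv {!}^*$. Thus $a \between u$ genuinely holds, whence $u \le \st(a,U)$ and therefore $w \le u \le \st(a,U)$; taking the join over all such $w$ gives the desired bound with no case distinction.

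Finally, to conclude that the two relations agree I would combine these implications with the identification $\E = \overline{\widehat{\E}}$ from \cref{thm:entourage_vs_cover_preuniformity}. If $E \in \E$ witnesses $a \vartriangleleft b$ in the entourage sense, the first implication shows the cover $\widehat{E} \in \widehat{\E}$ witnesses it in the covering sense. Conversely, any cover $U \in \widehat{\E}$ witnessing $a \vartriangleleft b$ contains a basic strong cover $\widehat{E}$, and monotonicity of the star in its cover argument gives $\st(a,\widehat{E}) \le \st(a,U) \le b$; the second implication then produces the entourage $\overline{\widehat{E}} \in \E$ witnessing $a \vartriangleleft b$. Hence the two uniformly below relations coincide.
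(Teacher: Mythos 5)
Your proof is correct and follows essentially the same route as the paper's: the first implication by extracting a witness of $a \between u$ in the internal logic, the second by distributing the existential quantifier over the join $\overline{U} = \bigvee_{u \in U} u \oplus u$, and the coincidence of the two relations by combining these implications with the identities $\U = \widehat{\overline{\U}}$ and $\E = \widelinehat{\E}$ from the preceding theorem. The only divergence is your overtness detour in the second implication, which is avoidable: for each $u \in U$ the antecedent open $\exists y\colon X.\ y \in a \land (y,z) \in u \oplus u$ computes directly to $\bigvee\{u \mid a \between u\}$ (a join over a subsingleton index set), so summing over $u \in U$ gives literally $\st(a,U)$ and the hypothesis $\st(a,U) \le b$ applies with no appeal to positive elements.
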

\begin{proof}
 Suppose $E \circ (a \oplus a) \le b \oplus b$. Recall that in the internal logic this is equivalent to $\exists y\colon X.\ y \in a \land (y,z) \in E \vdash_{z\colon X} z \in b$. We also have $\st(a, \widehat{E}) = \bigvee\{u \in \widehat{E} \mid a \between u\} = \bigvee\{u \mid u \oplus u\le E,\, a \between u\}$.
 Consider $u \in \O X$ such that $u \oplus u \le E$ and $a \between u$. Then if $z \in u$ in the internal logic, we can take $y \in a \wedge u$ and conclude that $z \in b$. So $u \le b$ and hence $\st(a, \widehat{E}) \le b$, as required.
 
 Now suppose $\st(a, U) \le b$. Then for any $u \in U$ such that $a \between u$, we have $u \le b$.
 We must show $y \in a \land (y,z) \in \overline{U} \vdash_{y,z\colon X} z \in b$ where $\overline{U} = \bigvee\{u \oplus u \mid u \in U\}$.
 By the join and existential quantification rules it is enough to show $\exists y\colon X.\ y \in a \land (y,z) \in u \oplus u \vdash_{z\colon X} z \in b$ for each $u \in U$.
 But this is just the assumption expressed in the internal logic and so we are done.
\end{proof}

Now combining \cref{thm:entourage_vs_cover_preuniformity} and \cref{lem:uniformly_below_relation_covers_vs_entourages} we obtain the following result.

\begin{theorem}
 The isomorphism of categories from \cref{thm:entourage_vs_cover_preuniformity} restricts to one between uniform locales via entourages and uniform locales via covers.
\end{theorem}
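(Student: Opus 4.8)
The plan is to observe that the property of being a \emph{uniform} locale is, in both the entourage and the covering setting, nothing more than the admissibility condition $b = \bigvee_{a \vartriangleleft b} a$ imposed on a pre-uniform locale, and that this condition refers only to the underlying frame $\O X$ together with the uniformly below relation $\vartriangleleft$. Since the isomorphism of \cref{thm:entourage_vs_cover_preuniformity} commutes with the forgetful functor into $\OLoc$, it fixes the underlying overt locale $X$ and hence its frame of opens. Thus the entire statement reduces to checking that admissibility is transported back and forth along the object-correspondences $(X,\E) \mapsto (X,\widehat{\E})$ and $(X,\U) \mapsto (X,\overline{\U})$.

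For this I would invoke \cref{lem:uniformly_below_relation_covers_vs_entourages} directly. Given a pre-uniform locale $(X,\E)$ via entourages, its image is $(X,\widehat{\E})$, and the lemma tells us that the uniformly below relation computed from $\E$ coincides, as a subset of $\O X \times \O X$, with the one computed from $\widehat{\E}$. Consequently, for each $b \in \O X$ the set $\{a \mid a \vartriangleleft b\}$ --- and therefore the join $\bigvee_{a \vartriangleleft b} a$ --- is literally the same object whether $\vartriangleleft$ is read off from the entourages or from the covers. Hence $b = \bigvee_{a \vartriangleleft b} a$ holds for $(X,\E)$ if and only if it holds for $(X,\widehat{\E})$, so one is a uniform locale exactly when the other is. The same argument applied to the inverse functor shows that $(X,\U)$ is a uniform locale via covers if and only if $(X,\overline{\U})$ is a uniform locale via entourages.

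It then remains only to package this as a restriction of the isomorphism. The categories of uniform locales are full subcategories of the respective pre-uniform categories --- a morphism of uniform locales is just a morphism of the underlying pre-uniform locales --- so no condition on morphisms needs to be checked. Because the object-bijection of \cref{thm:entourage_vs_cover_preuniformity} carries uniform objects to uniform objects in both directions by the previous paragraph, it restricts to a bijection between the uniform objects on the two sides, and the isomorphism of categories restricts accordingly. There is essentially no obstacle here: all the genuine content lies in \cref{lem:uniformly_below_relation_covers_vs_entourages}, and the one point worth making explicit is that admissibility is not merely preserved but also reflected, which is automatic since the correspondence is a bijection fixing both $\O X$ and $\vartriangleleft$.
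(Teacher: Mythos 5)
Your argument is correct and is precisely the paper's intended proof: the theorem is stated there as an immediate consequence of combining \cref{thm:entourage_vs_cover_preuniformity} with \cref{lem:uniformly_below_relation_covers_vs_entourages}, exactly as you do. Your write-up just makes explicit the (routine) points that admissibility depends only on $\O X$ and $\vartriangleleft$ and that the uniform categories are full subcategories, so nothing needs checking on morphisms.
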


We can now provide the proof of \cref{prop:uniform_reflection}, which states that $\UnifLoc$ is a reflective subcategory of $\PreUnifLoc$, using the covering approach.

\begin{proof}[Proof of \cref{prop:uniform_reflection}]\label{proof:prop:uniform_reflection}
 Given a pre-uniform locale $X = (X, \U)$, we can consider the set $R \subseteq \O X$ of elements $b$ such that $b = \bigvee_{a \vartriangleleft b} a$. We claim that $R$ is a subframe of $\O X$.
 
 This is most easily understood by showing the map $r\colon b \mapsto \bigvee_{a \vartriangleleft b} a$ is a conucleus --- that is, a meet-preserving interior operator.
 Certainly, $r$ is monotone and deflationary. For idempotence, we consider $a \vartriangleleft b$. Then there is a $c$ such that $a \vartriangleleft c \vartriangleleft b$ and so $a \vartriangleleft c \le r(b)$. Hence, $r(b) \le r(r(b))$ and $r$ is idempotent.
 
 We have $r(1) = 1$, since $1 \vartriangleleft 1$. We must show $r(b) \wedge r(b') \le r(b \wedge b')$. It suffices to consider $a \vartriangleleft b$ and $a' \vartriangleleft b'$ and show that $a \wedge a' \vartriangleleft b \wedge b'$. But this is immediate from part (ii) of \cref{lem:uniformly_below_properties}.
 Thus, $r$ is indeed a conucleus and its set of fixed points $R$ is a subframe.
 
 We define $\Upsilon X$ to be the locale with underlying frame $R$ equipped with the final uniformity from the locale map given by $R \hookrightarrow \O X$.
 This uniformity can be described explicitly as $\{ U \subseteq R \mid {\downarrow} i[U] \in \U \}$ where $i$ is the subframe inclusion.
 It is easy to see this is indeed the final uniformity once we know it is a uniformity at all. The only nontrivial condition to check is that for each $U \subseteq R$ such that ${\downarrow} i[U] \in \U$ there is a strong cover $V$ such that ${\downarrow} i[V] \in \U$ and $V^\star \subseteq U$. Since $i$ is injective and hence strongly dense we know that $a > 0$ in $R$ if and only if $i(a) > 0$ in $\O X$. Thus, there is no difference between strength or the star operation with respect to the subframe versus the parent frame and so it is enough to show that covers of the from ${\downarrow} i[U]$ give a base for $\U$.
 
 Let $V$ be a uniform cover of $X$ and consider the downset $V' = \{u \mid u \vartriangleleft v \in V\}$. Take a uniform cover $W$ such that $W^\star \subseteq V$.
 Then for $w \in W$, we have $w \le \st(w, W) \in V$ and hence $w \in V'$. Thus, $W \subseteq V'$ and so $V'$ is a uniform cover. But note that $\bigvee_{u \vartriangleleft v} u$ lies in $R$ by idempotence of $r$ and set $U = {\downarrow}\{ \bigvee_{u \vartriangleleft v} u \mid v \in V \} \subseteq R$. It is now clear that $V' \subseteq {\downarrow} i[U] \subseteq V$ and so $U$ is uniform cover of $\Upsilon X$ and the covers of the form ${\downarrow} i[U]$ form a base for $\U$.
 
 The above arguments also quickly imply that the uniformly below relations on $R$ and $\O X$ agree. Thus, the uniformity on $\Upsilon X$ is admissible by the construction of $R$.
 
 We claim that the epimorphism of pre-uniform locales $\upsilon_X\colon X \to \Upsilon X$ induced by $i$ is the unit of an adjunction between $\Upsilon$ and the inclusion functor from $\UnifLoc$ into $\PreUnifLoc$.
 Consider the following diagram in $\PreUnifLoc$ where $Y$ is a \emph{uniform} locale. We must show $f^\flat$ exists. (It is unique since $\upsilon_X$ is epic.) 
 \begin{center}
   \begin{tikzpicture}[node distance=3.0cm, auto]
    \node (UX) {$\Upsilon X$};
    \node (X) [below of=UX] {$X$};
    \node (Y) [right of=X] {$Y$};
    \draw[->] (X) to node {$\upsilon_X$} (UX);
    \draw[->, dashed] (UX) to node {$f^\flat$} (Y);
    \draw[->] (X) to node [swap] {$f$} (Y);
   \end{tikzpicture}
 \end{center}
 Since $\upsilon_X$ is final, it is sufficient to check this factorisation on the level of the underlying locale maps. So we only need to show that the image of the frame homomorphism $f^*$ lies in $R \subseteq \O X$.
 
 As $Y$ is a uniform locale, every element of $\O Y$ satisfies $b = \bigvee_{a \vartriangleleft b} a$. Applying $f^*$ we obtain $f^*(b) = \bigvee_{a \vartriangleleft b} f^*(a) \le \bigvee_{a' \vartriangleleft f^*(b)} a' \in R$, where the inequality holds since $a \vartriangleleft b \implies f^*(a) \vartriangleleft f^*(b)$. The result follows.
\end{proof}

We end this section with some more basic results about uniform locales. The following result was already observed by Johnstone in \cite{johnstone1991constructive}.
\begin{proposition}
 Uniform locales are weakly regular.
\end{proposition}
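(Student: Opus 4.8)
The plan is to reduce weak regularity to the single implication ${\vartriangleleft} \subseteq {}$``weakly rather below'', and then to read that implication off directly from the inflationary property of the star operation recorded in \cref{lem:star_is_inflationary}. Recall that $X$ is weakly regular precisely when every $b \in \O X$ satisfies $b = \bigvee\{a \mid \wkclo{a} \le b\}$, where $\wkclo{a}$ denotes the weak closure of the open sublocale determined by $a$ and the inequality is taken in the order of sublocales. Since $X$ is a uniform locale we already know $b = \bigvee_{a \vartriangleleft b} a$, and any $a$ with $\wkclo{a} \le b$ certainly satisfies $a \le b$ (as $a \le \wkclo{a} \le b$ by \cref{lem:star_is_inflationary}). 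Hence it suffices to prove that $a \vartriangleleft b$ implies $\wkclo{a} \le b$: this sandwiches $b$ between $\bigvee_{a \vartriangleleft b} a$ and $\bigvee\{a \mid \wkclo{a} \le b\}$, forcing the two joins to coincide.

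To establish the implication I would pass to the covering description of the uniformity, which is legitimate by \cref{thm:entourage_vs_cover_preuniformity} together with the agreement of the two uniformly below relations proved in \cref{lem:uniformly_below_relation_covers_vs_entourages}. So suppose $a \vartriangleleft b$; then there is a uniform covering downset $U \in \U$ with $\st(a, U) \le b$ as opens. Because every pre-uniform locale is overt, the open sublocale determined by $a$ is overt, so \cref{lem:star_is_inflationary} applies and yields $\wkclo{a} \le \st(a,U)$ in the order of sublocales. Here one need only observe that the star of the open sublocale of $a$ agrees with the star of the open $a$, since $V \between u \iff a \between u \iff a \wedge u > 0$ for $V$ the open sublocale of $a$, so that the index set $\{u \in U \mid V \between u\}$ defining the sublocale-valued star is identical to the one defining $\st(a,U)$. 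Combining the two inequalities gives $\wkclo{a} \le \st(a,U) \le b$, as required.

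There is essentially no obstacle beyond assembling the pieces; the genuine content lives in \cref{lem:star_is_inflationary}, whose proof already exploited overtness and the identity $V \between c \iff \wkclo{V} \between c$ to locate the weak closure inside the star. The only point deserving care in the present argument is the bookkeeping identifying the open $\st(a,U)$ with the sublocale-valued star appearing in that lemma, and the observation that \cref{thm:entourage_vs_cover_preuniformity} lets us work entirely with covering uniformities, where this inflationary estimate is available.
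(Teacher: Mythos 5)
Your proposal is correct and follows exactly the paper's argument: the paper likewise deduces from \cref{lem:star_is_inflationary} that $a \vartriangleleft b$ implies $a$ is weakly rather below $b$, and then concludes by admissibility. The extra bookkeeping you supply (passing to the covering description and identifying the open star with the sublocale-valued one) is just an expansion of what the paper leaves implicit.
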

\begin{proof}
 By \cref{lem:star_is_inflationary} we know that $a \vartriangleleft b$ implies that $a$ is weakly rather below $b$. Thus, weak regularity follows from the admissibility condition.
\end{proof}
\begin{remark}
 In fact this gives somewhat more. Classically under the assumption of the Axiom of Dependent Choice, uniform locales are not just regular, but completely regular. Without Dependent Choice (but still using classical logic) complete regularity is too strong a condition. However, there is a variant of completely regularity, called \emph{strong regularity} in \cite{banaschewski2003stone}, which still holds. A frame is strongly regular if every element $b$ can be expressed as $b = \bigvee_{a \vartriangleleft b} a$ for some interpolative relation $\vartriangleleft$ contained in $\prec$. This is implied by complete regularity, and is equivalent to it given Dependent Choice. Moreover, using classical logic, a locale is uniformisable if and only if it is strongly regular. The terminology becomes rather unfortunate in our setting: since the uniformly below relation interpolates, the above proof shows that every uniform locale is `strongly weakly regular'.
\end{remark}
\begin{corollary}\label{cor:strongly_dense_epic}
 Strongly dense uniform maps are epic in $\UnifLoc$.
\end{corollary}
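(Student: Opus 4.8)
The plan is to argue exactly as one does classically to prove that dense maps into Hausdorff (or regular) locales are epic, but with \emph{dense} strengthened to \emph{strongly dense} and \emph{closed} weakened to \emph{weakly closed}. First note that since the forgetful functor $\UnifLoc \to \OLoc$ is faithful, it suffices to prove that the underlying locale maps of any two uniform maps $g, h\colon (Y,\F) \to (Z,\G)$ with $g \circ f = h \circ f$ coincide. In particular the uniform structures on $X$ and $Y$ play no further role; all that is used is that the target $Z$ is a uniform locale, hence \emph{weakly regular} by the preceding proposition.

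Next I would form the equaliser $\iota\colon E \hookrightarrow Y$ of the underlying maps of $g$ and $h$ in $\OLoc$, which exists since $\OLoc$ is complete. Being an equaliser, $\iota$ is an extremal monomorphism, hence an overt sublocale inclusion, and it is the pullback of the diagonal $\Delta_Z\colon Z \to Z \times Z$ along $(g,h)\colon Y \to Z \times Z$. Since $g \circ f = h \circ f$, the strongly dense map $f$ factors as $f = \iota \circ f'$ for a unique $f'\colon X \to E$.

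The payoff step is to exploit the $(\text{strongly dense},\text{weakly closed})$ factorisation system on $\OLoc$, \emph{assuming} that $\iota$ is weakly closed. Factoring $f' = m \circ e$ with $e$ strongly dense and $m$ weakly closed, we obtain $f = (\iota \circ m) \circ e$, where $e$ is strongly dense and $\iota \circ m$ is weakly closed (weakly closed sublocale inclusions are closed under composition). By uniqueness of the factorisation of the strongly dense map $f$, the weakly closed part $\iota \circ m$ must be an isomorphism; hence $\iota$ is a split epimorphism, and being also an extremal monomorphism it is an isomorphism. Thus $E = Y$ and $g = h$, so $f$ is epic.

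The main obstacle is precisely the assumption used above, namely that $\iota$ is weakly closed. This reduces to two facts about the fibrewise theory of \cite{johnstone1990fibrewise} rather than anything about uniformities: that a weakly regular locale is \emph{weakly Hausdorff}, i.e.\ that its diagonal $\Delta_Z$ is a weakly closed sublocale of $Z \times Z$ (the weak analogue of ``regular implies Hausdorff''), and that the weakly closed sublocales behave well enough under the pullback $\iota = (g,h)^{-1}(\Delta_Z)$ for weak closedness to be inherited by $\iota$. Once these are in hand the remainder of the argument is purely formal. (Alternatively, one could avoid the diagonal entirely and run the classical join argument directly: by weak regularity write each $c \in \O Z$ as $c = \bigvee_{c' \vartriangleleft c} c'$ and show $g^*(c') \le h^*(c)$ using that $c'$ is weakly rather below $c$ together with strong density of $f$; but this again rests on the same fibrewise separation lemma, now in its pointwise form.)
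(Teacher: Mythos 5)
Your argument is correct and is exactly the ``familiar'' Hausdorff/regular argument that the paper's one-line proof alludes to, transported along the dictionary dense $\mapsto$ strongly dense, closed $\mapsto$ weakly closed: the two inputs you flag (weakly regular implies weakly Hausdorff, and pullback-stability of weakly closed sublocales) are indeed available in the fibrewise theory of \cite{johnstone1990fibrewise}, and combined with the (strongly dense, weakly closed) factorisation they force the equaliser to be all of $Y$. The only cosmetic wrinkle is that the equaliser in $\OLoc$ is the overt coreflection of the one in $\Loc$; it is cleaner to apply weak closedness to the $\Loc$-equaliser (through which $f$ still factors) and conclude from $\wkclo{\mathrm{im}\, f} = Y$ that it is all of $Y$.
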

\begin{proof}
 The proof proceeds exactly like the familiar one for Hausdorff spaces/locales.
\end{proof}

A full characterisation of uniformisable locales appears to be difficult and we do not attempt it here.
However, the following proposition does improve on Johnstone's result that completely regular (overt) locales are uniformisable.
\begin{proposition}
 Strongly regular overt locales are uniformisable.
\end{proposition}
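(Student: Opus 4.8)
The plan is to manufacture an admissible uniformity directly from the strong regularity data. By definition, strong regularity provides an interpolative relation $\vartriangleleft$ with $\vartriangleleft \subseteq \prec$ (weakly rather below) and $b = \bigvee_{a \vartriangleleft b} a$ for every $b \in \O X$. The key preliminary observation is that it suffices to construct \emph{any} pre-uniformity on $X$ for which $a \vartriangleleft b$ (in the given relation) implies that $a$ is uniformly below $b$: for then $b = \bigvee_{a \vartriangleleft b} a \le \bigvee\{a \mid a \text{ uniformly below } b\} \le b$, so the uniformity is admissible and $X$ is uniformisable. Since the entourage and covering pictures agree (\cref{thm:entourage_vs_cover_preuniformity} and \cref{lem:uniformly_below_relation_covers_vs_entourages}), I would work with covers, where the uniformly below relation is expressed through $\st$.

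I would take as a base for the candidate uniformity the \emph{finite $\vartriangleleft$-covers}: downsets generated by finitely many opens $b_1,\dots,b_n$ for which there exist $a_i \vartriangleleft b_i$ with $\bigvee_i a_i = 1$, each such downset then cut down to its positive elements so that the strength condition holds (this is harmless by overtness, exactly as in the remark following the definition of covering pre-uniform locales). That these form a filter base is immediate: a common refinement of two finite $\vartriangleleft$-covers is obtained from the pairwise meets $b_i \wedge b_j'$ with witnesses $a_i \wedge a_j'$, using part (ii) of \cref{lem:uniformly_below_properties}. Reflexivity ($\bigvee_i b_i = 1$) is clear and symmetry is automatic for covers.

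The substance is the star-refinement axiom, and this is where $\vartriangleleft \subseteq \prec$ is essential and where the only real difficulty lies. Classically one refines a finite $\vartriangleleft$-cover $\{b_i\}$ using pseudocomplements $a_i^*$ (available because $a_i \prec b_i$ gives $a_i^* \vee b_i = 1$), forming the complemented cover with cells $\bigwedge_{i \in S} b_i \wedge \bigwedge_{i \notin S} a_i^*$ and checking that the star of each cell lands in a single $b_i$. Constructively there are no pseudocomplements to hand, so I would instead extract from each relation $a_i \vartriangleleft b_i$ a \emph{weak complement}: an open $g_i$ with $g_i \vee b_i = 1$ for which $a_i \wedge g_i$ is not positive, i.e.\ $\neg(a_i \between g_i)$. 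The first technical point is to establish that weakly rather below supplies such a $g_i$ — that $\wkclo{a_i} \le b_i$ is equivalent, in an overt locale, to the existence of such a $g_i$ — this being the weak/positive analogue of the classical characterisation $a \prec b \iff \exists e\,(a \wedge e = 0 \land e \vee b = 1)$. One then runs the complemented-refinement argument with $g_i$ in place of $a_i^*$, interpolating $\vartriangleleft$ where needed (\cref{lem:uniformly_below_properties}(iii)) to create enough room, and carries out the star computation entirely in the positivity logic: a cell is discarded from a star exactly when a relevant meet fails to be positive, and positivity of a cell forces, through $\bigvee_i a_i = 1$, the membership of an index witnessing containment in a single $b_i$. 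Reconciling the overlapping cells with the single-index requirement of $V^\star \subseteq U$, while only ever invoking ``not positive'' (never genuine disjointness), is the crux of the argument and the step I expect to be most delicate.

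Admissibility then follows from the same construction. Given $a \vartriangleleft b$, interpolate $a \vartriangleleft c \vartriangleleft b$ and take the weak complement $g$ of $c$ relative to $b$; the weak-complement refinement separates $a$ from the exterior of $b$, producing a uniform cover $U$ for which $g$ (and every cell meeting it) is excluded from $\st(a,U)$ because $a \wedge g$ is not positive, so that $\st(a,U) \le b$ and $a$ is uniformly below $b$. By the opening reduction this gives $b = \bigvee_{a \vartriangleleft b} a$ for the constructed uniformity, so it is admissible and $X$ is a uniform locale. Conversely, by \cref{lem:star_is_inflationary} its uniformly below relation is contained in weakly rather below, confirming that what we have built is exactly the totally bounded uniformity associated with the strong inclusion $\vartriangleleft$.
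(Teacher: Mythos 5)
There is a genuine gap, and it stems from a misreading of the hypothesis. You take $\prec$ in the definition of strong regularity to be the \emph{weakly} rather below relation and then assert that ``constructively there are no pseudocomplements to hand,'' which sends you on a detour through unproven ``weak complements'' $g_i$ with $g_i \vee b_i = 1$ and $\neg(a_i \between g_i)$. Both premises are wrong: every frame is constructively a complete Heyting algebra, so the pseudocomplement $a^* = \bigvee\{c \mid c \wedge a = 0\}$ always exists, and the $\prec$ in the definition of strong regularity is the ordinary rather below relation $a^* \vee b = 1$. Thus $a \vartriangleleft b$ hands you the two-element cover ${\downarrow}\{a^*, b\}$ for free, with genuine disjointness $a \wedge a^* = 0$ — no substitute notion is needed, and your ``first technical point'' (an equivalence between $\wkclo{a} \le b$ and the existence of such a $g$) is neither established nor relevant.

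More seriously, the step you yourself identify as ``the crux'' — the star-refinement computation — is not carried out. This is where the actual content of the proof lives: one interpolates $a_0 \vartriangleleft a_{\frac{1}{3}} \vartriangleleft a_{\frac{2}{3}} \vartriangleleft a_1$, forms the meet $C$ of the three subbasic covers ${\downarrow}\{a_0^*, a_{\frac{1}{3}}\}$, ${\downarrow}\{a_{\frac{1}{3}}^*, a_{\frac{2}{3}}\}$, ${\downarrow}\{a_{\frac{2}{3}}^*, a_1\}$, expands it by distributivity into four generators, and uses the disjointnesses $a_{\frac{1}{3}} \wedge a_{\frac{2}{3}}^* = 0$ (which follows from $a_{\frac{1}{3}} \prec a_{\frac{2}{3}}$) to bound each star by either $a_1$ or $a_0^*$. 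A single interpolation, as you suggest for the admissibility step, does not suffice for star-refinement; three are needed. Your base of general ``finite $\vartriangleleft$-covers'' also makes this verification harder than necessary — working with two-element subbasic covers and closing under finite meets and positivity restriction is what makes the computation tractable. Admissibility itself is then the easy part, exactly as you say: $\st(a, {\downarrow}\{a^*, b\}) \le b$ because $\neg(a \between a^*)$. So the skeleton of your plan matches the paper's, but the argument as written does not close.
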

\begin{proof}
 Let $X$ be an overt locale and let $\vartriangleleft$ be an interpolative relation contained in $\prec$ witnessing its strong regularity.
 We will take our subbasic uniform covering downsets to be ${\downarrow}\{ a^*, b \}$ for $a \vartriangleleft b$. Note that these are indeed covers since $a \vartriangleleft b$ implies $a \prec b$.
 
 We claim that the covers of the form ${\downarrow} \{u \in \bigwedge_{i \in I} C_i \mid u > 0\}$, where $I$ is finite and each $C_i$ is one of these subbasic covers, constitute a base for an admissible covering uniformity on $X$.
 To show this gives a uniformity it suffices to find for each subbasic cover ${\downarrow}\{a_0^*, a_1\}$ a finite set $S$ of subbasic covers such that $(\bigwedge S)^\star \subseteq {\downarrow} \{a_0^*, a_1\}$.
 We use the interpolativity of $\vartriangleleft$ to obtain $a_0 \vartriangleleft a_\frac{1}{3} \vartriangleleft a_\frac{2}{3} \vartriangleleft a_1$
 and consider $C = {\downarrow}\{a_0^*, a_\frac{1}{3}\} \cap {\downarrow}\{a_\frac{1}{3}^*, a_\frac{2}{3}\} \cap {\downarrow}\{a_\frac{2}{3}^*, a_1\}$.
 Writing each ${\downarrow}\{x, y\}$ as ${\downarrow}x \cup {\downarrow}y$ and using distributivity we find that $C = {\downarrow}\{a_\frac{1}{3}, a_0^* \wedge a_\frac{2}{3}, a_\frac{1}{3}^* \wedge a_1, a_\frac{2}{3}^*\}$.
 Observe that the first two elements $a_\frac{1}{3}$ and $a_0^* \wedge a_\frac{2}{3}$ are disjoint from the last element $a_\frac{2}{3}^*$, while the last two elements are disjoint from the first one.
 Thus, when $x$ is one of the first two elements we have $\st(x, C) \le a_\frac{1}{3} \vee (a_0^* \wedge a_\frac{2}{3}) \vee (a_\frac{1}{3}^* \wedge a_1) \le a_1$
 and when $x$ is one of the last two elements we have $\st(x, C) \le (a_0^* \wedge a_\frac{2}{3}) \vee (a_\frac{1}{3}^* \wedge a_1) \vee a_\frac{2}{3}^* \le a_0^*$.
 So we do have $C^\star \subseteq {\downarrow} \{a_0^*, a_1\}$ as required.
 
 Finally, we show admissibility. Suppose $a \vartriangleleft b$. Then $\{a^*, b\}$ is a uniform cover and we have $\st(a, \{a^*, b\}) \le b$ and hence $a$ is uniformly below $b$.
 Admissibility then follows from the assumption that $b = \bigvee_{a \vartriangleleft b} a$.
\end{proof}

\section{Completion}

We are now in a position to discuss completeness of uniform locales. The definition is similar to the classical case except we use weak closedness and strong density instead of ordinary closedness and density.

\begin{definition}
 A uniform locale $X$ is \emph{complete} if it is \emph{universally weakly closed} in the sense that whenever it occurs as a uniform sublocale of a uniform locale, it is a weakly closed sublocale.
 Equivalently, it is complete if every strongly dense uniform embedding $X \hookrightarrow Y$ is an isomorphism.
\end{definition}

Every uniform locale has a \emph{completion} --- that is, a (unique) complete uniform locale in which it is strongly densely uniformly embedded. As in the spatial setting, the completion may be constructed by means of \emph{Cauchy filters}.

\begin{definition}
 A \emph{Cauchy filter} on a uniform locale $(X, \U)$ is a filter $F$ on $\O X$ such that
 \begin{itemize}
  \item $F$ only contains positive elements,
  \item $F$ contains an open from every uniform cover.
 \end{itemize}
 We say a Cauchy filter $F$ is \emph{regular} if for every $a \in F$ there is a $b \in F$ such that $b \vartriangleleft a$.
\end{definition}

The regular Cauchy filters on $X$ will be the points of the completion of $X$. Thus, we consider the classifying locale $\Cvar X$ of the theory of regular Cauchy filters on $X$.
This classifying locale is given by a presentation with a generator $[a \in F]$ for each $a \in \O X$ and the following relations:
\begin{enumerate}
 \item $[1 \in F] = 1$,
 \item $[a \wedge b \in F] = [a \in F] \wedge [b \in F]$,
 \item $[a \in F] \le \bigvee\{1 \mid a > 0\}$,
 \item $\bigvee_{u \in U} [u \in F] = 1$ for each $U \in \U$,
 \item $[a \in F] \le \bigvee_{b \vartriangleleft a} [b \in F]$.
\end{enumerate}
By the universal property of the classifying locale, the identity morphism on $\Cvar X$ corresponds to a `$\Cvar X$-indexed regular Cauchy filter on $X$' sending $a \in \O X$ to $[a \in F] \in \O\Cvar X$.
There is also a locale embedding $\gamma\colon X \hookrightarrow \Cvar X$ given by the frame homomorphism $[a \in F] \mapsto a$ (corresponding to the identity $X$-indexed regular Cauchy filter on $X$).
In fact, it is not hard to see that the former map is the right adjoint of the latter. (For the nontrivial direction, write a general element of $\O\Cvar X$ as $\bigvee_\alpha [a_\alpha \in F]$ and break the inequality up into a part for each $\alpha$.)
Condition (iii) then implies that $\gamma_*{!}^*(p) \le {!}^*(p)$ for all $p \in \Omega$ and hence $\gamma$ is strongly dense.

It will also be useful to define the \emph{classifying locale $\Cauchy X$ of all Cauchy filters on $X$}. The presentation is similar to that of $\Cvar X$, but without the regularity condition (v).
As above, we have a canonical strongly dense embedding  $\digamma\colon X \hookrightarrow \Cauchy X$ defined by $\digamma^*\colon [a \in F] \mapsto a$
and whose right adjoint is a $\Cauchy X$-indexed Cauchy filter on $X$ sending $a$ to $[a \in F]$. Of course, we have $\digamma^* = \gamma^* \rho^*$ and $\rho^*\digamma_* = \gamma_*$ where $\rho$ is the natural embedding $\Cvar X \hookrightarrow \Cauchy X$.

We can use $\digamma_*$ to define a pre-uniform structure on $\Cauchy X$.
\begin{lemma}\label{lem:preuniform_structure_on_CL}
 Let $(X, \U)$ be a uniform locale. The downsets of the form ${\downarrow}\digamma_*[U]$ for $U \in \U$ form a base for a covering uniformity on $\Cauchy X$.
 Moreover, $\digamma\colon X \hookrightarrow \Cauchy X$ is a strongly dense uniform embedding.
\end{lemma}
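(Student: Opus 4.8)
The plan rests on one fact about positivity. Recall that $\digamma_*(u) = [u \in F]$ and that, since $\digamma$ is an embedding, $\digamma^*\digamma_* = \id$; moreover $\digamma_*$ preserves all meets, being a right adjoint. I first record the \emph{positivity transfer} $\digamma_*(w) > 0 \iff w > 0$ for $w \in \O X$. The forward implication is immediate from the strong density of $\digamma$ (already established), since $\digamma_*(w) > 0$ forces $\digamma^*(\digamma_*(w)) = w > 0$. For the converse, if $w > 0$ then $X \between \digamma^*(\digamma_*(w))$, so $(\Sloc\digamma)_!(X) \between \digamma_*(w)$ by the image identity $(\Sloc\digamma)_!(X)\between a \iff X \between \digamma^*(a)$, and hence $\Cauchy X \between \digamma_*(w)$ by monotonicity of $\between$ in the sublocale argument, i.e. $\digamma_*(w) > 0$. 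Combined with meet-preservation this gives $\digamma_*(v) \between \digamma_*(v') \iff \digamma_*(v\wedge v') > 0 \iff v \between v'$, which is exactly what lets me match up the two betweenness relations.

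I then verify the three requirements for $\{{\downarrow}\digamma_*[U] : U \in \U\}$ to be a base for a covering uniformity. Each ${\downarrow}\digamma_*[U]$ is a covering downset: this is relation (iv) of the presentation, $\bigvee_{u\in U}[u\in F] = \bigvee\digamma_*[U] = 1$. It is a filter base because $\U$ is a filter: given $U,U'$, take $W \in \U$ with $W \subseteq U \cap U'$, so ${\downarrow}\digamma_*[W] \subseteq {\downarrow}\digamma_*[U]\cap{\downarrow}\digamma_*[U']$. For star-refinement, given $U$ choose a strong $V' \in \U$ with $V'^\star \subseteq U$; I claim ${\downarrow}\digamma_*[V']$ is the required strong refinement. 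It is strong: using $v > 0 \Rightarrow \digamma_*(v) > 0$, each generator $\digamma_*(v)$ sits below a positive generator. For the refinement, by monotonicity of $\st$ in its first argument it suffices to bound the stars of the generators, and I will show $\st(\digamma_*(v), {\downarrow}\digamma_*[V']) \le \digamma_*(\st(v, V'))$: any $d \le \digamma_*(v')$ with $\digamma_*(v)\between d$ forces $\digamma_*(v)\between\digamma_*(v')$, hence $v \between v'$, hence $v' \le \st(v,V')$ and $d \le \digamma_*(\st(v,V'))$. Since $V'^\star \subseteq U$ gives $\st(v,V') \in U$, the bound lies in ${\downarrow}\digamma_*[U]$, and therefore $({\downarrow}\digamma_*[V'])^\star \subseteq {\downarrow}\digamma_*[U]$.

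Finally, $\digamma$ is a strongly dense uniform embedding. Strong density is given, and $\digamma$ is a locale embedding since $\digamma^*$ is onto ($\digamma^*[a\in F] = a$). It remains to see that $\digamma$ is initial for the topological functor $\PreUnifLoc \to \OLoc$, i.e. that $\U$ coincides with the uniformity pulled back from the one just constructed. This is immediate from $\digamma^*\digamma_* = \id$: computing on base elements, ${\downarrow}\digamma^*[{\downarrow}\digamma_*[U]] = {\downarrow}\{\digamma^*(d) : d \le \digamma_*(u),\ u \in U\} = U$, since $\digamma^*(d) \le u$ for such $d$ while $d = \digamma_*(u)$ recovers $u$. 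Thus the pulled-back base is exactly $\U$, which shows at once that $\digamma$ is a morphism of pre-uniform locales and that it is initial, hence a uniform embedding.

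The main obstacle is the star-compatibility inequality $\st(\digamma_*(v), {\downarrow}\digamma_*[V']) \le \digamma_*(\st(v, V'))$ and, beneath it, the positivity transfer $\digamma_*(w) > 0 \iff w > 0$; once meet-preservation of $\digamma_*$ and this transfer are secured, the betweenness relations on $X$ and $\Cauchy X$ agree and everything else is bookkeeping. I should also note that speaking of a covering uniformity presupposes that $\Cauchy X$ is overt, which is already implicit in the strong density of $\digamma$ asserted above.
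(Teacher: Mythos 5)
Your proposal is correct and follows essentially the same route as the paper: covers via relation (iv), star-refinement via the key inequality $\st(\digamma_*(v), {\downarrow}\digamma_*[V']) \le \digamma_*(\st(v,V'))$ obtained by transferring $\between$ along the strongly dense section $\digamma_*$, strength via positivity preservation of $\digamma_*$, and the embedding property from $\digamma^*\digamma_* = \id_{\O X}$. The only cosmetic difference is that the paper derives positivity preservation directly from $\digamma_*$ being a section of $\digamma^*$ (apply $\digamma^*$ to a covering of $\digamma_*(w)$), whereas you route it through $(\Sloc\digamma)_!$; both are fine.
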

\begin{proof}
 First note that $\Cauchy X$ is overt, since $X$ is overt and $\digamma\colon X \to \Cauchy X$ is strongly dense.
 
 The above downsets indeed are covers by condition (iv). It is also easy to see that they form a filter base.
 
 For the star-refinement axiom, consider ${\downarrow}\digamma_*[U]$. There is a strong $V \in \U$ such that $V^\star \le U$. We claim $({\downarrow}\digamma_*[V])^\star \subseteq {\downarrow}\digamma_*[U]$.
 It suffices to show $\st(\digamma_*(v), \digamma_*[V]) \le \digamma_*(\st(v, V)) \in {\downarrow}\digamma_*[U]$ for all $v \in V$. Consider $w \in V$ such that $\digamma_*(v) \between \digamma_*(w)$. Since $\digamma$ is strongly dense, we then have $v \wedge w = \digamma^*\digamma_*(v \wedge w) = \digamma^*(\digamma_*(v) \wedge \digamma_*(w)) > 0$ and hence $v \between w$. Thus, $w \le \st(v, V)$ so that $\digamma_*(w) \le \digamma_*(\st(v, V))$ and the claim follows.
 
 Finally, since $\digamma_*$ is a section of $\digamma^*$, it preserves positive elements. To see this explicitly, suppose $a \in \O X$ is positive and $\digamma_*(a) \le \bigvee A$. Applying $\digamma^*$ we have $a = \digamma^*\digamma_*(a) \le \bigvee \digamma^*[A]$. Hence $\digamma^*[A]$ is inhabited and so is $A$. Thus, $\digamma^*(a)$ is positive.
 It follows that $\digamma_*[V]$ as defined above is strong since $V$ is. Thus, we have shown that we do have a base for a uniformity on $\Cauchy X$.
 The map $\digamma$ is then a uniform embedding by construction, since $\digamma^*\digamma_* = \id_{\O X}$.
\end{proof}

We obtain a uniform structure on $\Cvar X$ in a similar way.

\begin{corollary}\label{lem:uniform_structure_on_CL}
 Let $(X, \U)$ be a uniform locale. The downsets of the form ${\downarrow}\gamma_*[U]$ for $U \in \U$ form a base for an admissible covering uniformity on $\Cvar X$.
 Moreover, $\gamma\colon X \hookrightarrow \Cvar X$ is a strongly dense uniform embedding.
\end{corollary}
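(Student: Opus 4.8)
The plan is to reuse the proof of \cref{lem:preuniform_structure_on_CL} essentially verbatim for the pre-uniform part and then to extract admissibility from the regularity relation (v) in the presentation of $\Cvar X$. For the pre-uniform structure, I would first note that $\Cvar X$ is overt (as $X$ is overt and $\gamma$ is strongly dense), that each ${\downarrow}\gamma_*[U]$ is a cover by condition (iv) since $\gamma_*(u) = [u \in F]$, and that these downsets form a filter base. The star-refinement axiom is proved exactly as before: for strong $V \in \U$ with $V^\star \le U$ one has $\st(\gamma_*(v), \gamma_*[V]) \le \gamma_*(\st(v, V))$, using that $\gamma$ is strongly dense and that $\gamma_*$ preserves meets. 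Since $\gamma_*$ is a section of $\gamma^*$ it preserves positivity, so $\gamma_*[V]$ is strong; and $\gamma^*\gamma_* = \id_{\O X}$ shows $\gamma$ is a strongly dense uniform embedding. (Equivalently, since $\gamma_* = \rho^*\digamma_*$, the covers ${\downarrow}\gamma_*[U]$ are precisely the pullbacks along $\rho$ of the basic covers of $\Cauchy X$, so $\Cvar X$ is a uniform sublocale of $\Cauchy X$ and everything here is inherited from \cref{lem:preuniform_structure_on_CL}.)

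The one genuinely new point is admissibility, which is where I expect the real work to be. It suffices to verify $b = \bigvee_{a \vartriangleleft b} a$ on the generators $b = \gamma_*(a) = [a \in F]$, because every open of $\Cvar X$ is a join of such generators and \cref{lem:uniformly_below_properties}(i) lets each piece uniformly below a generator be absorbed into the join over elements uniformly below the whole. The key claim is that $\gamma_*$ carries the uniformly below relation of $X$ into that of $\Cvar X$, i.e. $b \vartriangleleft a$ in $X$ implies $\gamma_*(b) \vartriangleleft \gamma_*(a)$ in $\Cvar X$. Granting this, condition (v) gives $\gamma_*(a) = [a \in F] \le \bigvee_{b \vartriangleleft a}[b \in F] = \bigvee_{b \vartriangleleft a}\gamma_*(b) \le \bigvee_{c \vartriangleleft \gamma_*(a)} c$, and the reverse inequality is immediate from $\vartriangleleft \subseteq \le$.

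To establish the key claim, suppose $b \vartriangleleft a$ in $X$, witnessed by $U \in \U$ with $\st(b, U) \le a$. I would show that ${\downarrow}\gamma_*[U]$ witnesses $\gamma_*(b) \vartriangleleft \gamma_*(a)$ via $\st(\gamma_*(b), {\downarrow}\gamma_*[U]) \le \gamma_*(\st(b, U)) \le \gamma_*(a)$. This is the same star computation as above: whenever $\gamma_*(b) \between \gamma_*(u)$ for $u \in U$, strong density of $\gamma$ forces $b \wedge u = \gamma^*(\gamma_*(b) \wedge \gamma_*(u)) > 0$, so $b \between u$, whence $u \le \st(b, U)$ and $\gamma_*(u) \le \gamma_*(\st(b,U))$; taking the join (and checking that passing to the downset does not enlarge the star) yields the inequality. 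The crux of the whole proof is thus that strong density of $\gamma$ lets the positivity $\gamma_*(b) \between \gamma_*(u)$ descend to $b \between u$ in $X$, which is exactly what turns the regularity relation (v) into genuine uniform-belowness in $\Cvar X$.
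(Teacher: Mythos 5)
Your proposal is correct and follows essentially the same route as the paper: the pre-uniform structure and embedding are inherited from $\Cauchy X$ via $\gamma_* = \rho^*\digamma_*$ (the paper's actual choice, which you note parenthetically), and admissibility is obtained by using the star computation $\st(\gamma_*(v), \gamma_*[V]) \le \gamma_*(\st(v,V))$ to show $\gamma_*$ preserves the uniformly below relation and then invoking relation (v) on the basic opens $[a \in F]$. Your explicit justification for why it suffices to check admissibility on generators (via \cref{lem:uniformly_below_properties}(i)) is a detail the paper leaves implicit, but nothing of substance differs.
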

\begin{proof}
 As above we have that $\Cvar X$ is overt, since $X$ is overt and $\gamma\colon X \to \Cvar X$ is strongly dense.
 Now since $\rho^*\digamma_* = \gamma_*$, the downsets ${\downarrow}\gamma_*[U]$ form a base for the uniformity on $\Cvar X$ inherited as a sublocale of $\Cauchy X$.
 Furthermore, $\gamma$ is a uniform embedding as before.
 
 It only remains to show admissibility. As we showed above for $\digamma$ we have $\st(\gamma_*(v), \gamma_*[V]) \le \gamma_*(\st(v, V))$.
 Applying $\gamma_*$ we then see that $\st(v, V) \le u$ implies $\st(\gamma_*(v), \gamma_*[V]) \le \gamma_*(u)$ and so $v \vartriangleleft u$ in $\O X$ implies $\gamma_*(v) \vartriangleleft \gamma_*(u)$ in $\O \Cvar X$.
 It is enough to show admissibility for the basic opens $\gamma_*(u) = [u \in F]$ and by condition (v) we have $\gamma_*(u) \le \bigvee_{v \vartriangleleft u} \gamma_*(v) \le \bigvee_{\gamma_*(v) \vartriangleleft \gamma_*(u)} \gamma_*(v)$, as required.
\end{proof}

The uniformity on $\Cauchy X$ is not in general admissible, while the uniformity on $\Cvar X$ is. The following proposition explains the relationship between these pre-uniform locales.
\begin{proposition}\label{prop:uniform_reflection_of_Cauchy_X}
The uniform locale $\Cvar X$ is the uniform reflection of $\Cauchy X$. Moreover, the frame homomorphism corresponding to the unit $\upsilon_{\Cauchy X}\colon \Cauchy X \to \Cvar X$ is left adjoint to $\rho^*$.
\end{proposition}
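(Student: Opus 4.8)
The plan is to identify $\Cvar X$ with the uniform reflection $\Upsilon(\Cauchy X)$ constructed in the proof of \cref{prop:uniform_reflection}. Recall from that proof that $\Upsilon(\Cauchy X)$ has underlying frame the subframe $R = \{c \in \O\Cauchy X \mid c = \bigvee_{p \vartriangleleft c} p\}$ of fixed points of the conucleus $r(c) = \bigvee_{p \vartriangleleft c} p$, and that its unit $\upsilon$ has frame map the inclusion $R \hookrightarrow \O\Cauchy X$. By \cref{lem:uniform_structure_on_CL} the locale $\Cvar X$ is admissible and carries the subspace uniformity generated by the covers ${\downarrow}\gamma_*[U] = {\downarrow}\rho^*[{\downarrow}\digamma_*[U]]$; hence $\rho$ is a uniform embedding. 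Moreover $\digamma = \rho\gamma$ is strongly dense (by \cref{lem:preuniform_structure_on_CL}) and $\digamma^* = \gamma^*\rho^*$, so $\rho$ is strongly dense, and being a uniform morphism $\rho^*$ preserves the uniformly below relation.

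The heart of the argument is the identity $\rho^* \circ r = \rho^*$, i.e. that $\rho^*$ cannot distinguish an open of $\Cauchy X$ from its uniform reflection. The inequality $\rho^*(r(c)) \le \rho^*(c)$ is immediate since $r$ is deflationary. For the reverse inequality I would invoke admissibility of $\Cvar X$ to write $\rho^*(c) = \bigvee_{q \vartriangleleft \rho^*(c)} q$ and then lift each witness back upstairs: given $q \vartriangleleft_{\Cvar X} \rho^*(c)$, a star-refinement with respect to some ${\downarrow}\gamma_*[U]$ witnesses this, and since that cover is precisely the $\rho^*$-image of the cover ${\downarrow}\digamma_*[U]$ on $\Cauchy X$, one transports the star estimate (using \cref{lem:star_is_inflationary} and strong density to preserve the meeting relation $\between$) to produce a $p \vartriangleleft_{\Cauchy X} c$ with $q \le \rho^*(p)$. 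Summing over $q$ then gives $\rho^*(c) \le \bigvee_{p \vartriangleleft c}\rho^*(p) = \rho^*(r(c))$, as required.

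Granting $\rho^* = {\rho^*}|_R \circ r$, the restriction ${\rho^*}|_R \colon R \to \O\Cvar X$ is surjective, since for $a \in \O\Cvar X$ the element $r(\rho_*(a)) \in R$ satisfies $\rho^*(r(\rho_*(a))) = \rho^*(\rho_*(a)) = a$. The remaining point is that ${\rho^*}|_R$ is \emph{injective} (equivalently order-reflecting on $R$), which I would again derive from the lifting step, now feeding $\rho^*(p) \vartriangleleft_{\Cvar X} \rho^*(c) \le \rho^*(d)$ back into $\Cauchy X$ and using strong density of $\rho$ to recover the inequality $p \le d$. Once ${\rho^*}|_R$ is a bijective frame homomorphism it is a frame isomorphism; writing $L := ({\rho^*}|_R)^{-1}\colon \O\Cvar X \to R \hookrightarrow \O\Cauchy X$, injectivity upgrades $\rho^* = {\rho^*}|_R \circ r$ to the identity $L\rho^* = r$ and $\rho^* L = \id$. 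From these two identities one checks directly that $L \dashv \rho^*$: the forward direction applies $\rho^*$, and the backward direction uses $L\rho^*(b) = r(b) \le b$. Transporting the reflection unit along the isomorphism ${\rho^*}|_R$ then identifies $\Upsilon(\Cauchy X)$ with $\Cvar X$ in $\UnifLoc$ (the uniformities agree, both being generated by the $U \in \U$), proving the first assertion, while the computation $\upsilon_{\Cauchy X}^* = L$ together with $L \dashv \rho^*$ gives the second.

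The main obstacle is the clash between the two descriptions of $\Cvar X$: as a sublocale of $\Cauchy X$ (a nucleus, via $\rho$) and as a subframe of $\O\Cauchy X$ (a conucleus, via $R$). Concretely, all the real work sits in the two lifting arguments of the second and third paragraphs, and in the fact they are meant to establish, namely that the quotient imposing the regularity relation (v) behaves like an open quotient in that $\rho^*$ preserves arbitrary meets and so admits a left adjoint. I expect both to reduce to the single structural feature that the uniform covers of $\Cvar X$ are \emph{exactly} the $\rho^*$-images of those of $\Cauchy X$, which is what allows star-refinements to be pulled back through $\rho^*$; strong density of $\rho$ is then the ingredient preventing this transport from collapsing positivity.
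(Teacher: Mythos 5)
Your overall architecture (identify $\O\Upsilon\Cauchy X$ with the subframe $R$ of fixed points of the conucleus, show that ${\rho^*}|_R$ is a frame isomorphism, and read off the adjunction) is sound and close in spirit to the paper's, but the two ``lifting'' steps on which everything rests are not justified as stated, and they sit exactly where the paper has to do its work. The map $\rho^*\colon\O\Cauchy X\to\O\Cvar X$ is a genuine frame quotient --- it imposes the regularity relation $[a\in F]\le\bigvee_{b\vartriangleleft a}[b\in F]$, which typically fails upstairs --- so it does not reflect the order, and strong density of $\rho$ only controls positivity ($a>0\implies\rho^*(a)>0$), not inequalities. Concretely: in your second paragraph the witness for $q\vartriangleleft_{\Cvar X}\rho^*(c)$ is an inequality $\st(q,{\downarrow}\gamma_*[U])\le\rho^*(c)$ living in $\O\Cvar X$, and the fact that the covers of $\Cvar X$ are $\rho^*$-images of covers of $\Cauchy X$ does not let you transport it to an inequality $\st(p,{\downarrow}\digamma_*[W])\le c$ in $\O\Cauchy X$; and in your third paragraph, deducing $p\le d$ from $\rho^*(p)\le\rho^*(d)$ for $p,d\in R$ is precisely the order-reflection that a quotient does not provide. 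Injectivity of ${\rho^*}|_R$ is the crux of the whole proposition and cannot come for free from density.

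What closes both gaps is the presentation of $\O\Cvar X$, which you never invoke. For the identity $\rho^*r=\rho^*$ you should not pass through admissibility of $\Cvar X$ at all: since the $[a\in F]$ form a base of $\O\Cauchy X$, it suffices to note that relation (v) gives $\rho^*([a\in F])\le\bigvee_{b\vartriangleleft a}\rho^*([b\in F])$, while $\digamma_*$ carries $b\vartriangleleft a$ in $X$ to $[b\in F]\vartriangleleft[a\in F]$ in $\Cauchy X$ (because $\st(\digamma_*(b),\digamma_*[V])\le\digamma_*(\st(b,V))$), so each disjunct is $\rho^*$ of something uniformly below $c$. For injectivity on $R$ one needs the computation that $B\le r'\rho^*(B)$ for every $B\in R$, where $r'$ is the frame homomorphism defined on generators by $[a\in F]\mapsto\bigvee_{b\vartriangleleft a}[b\in F]$ (well-definedness being checked against relations (i)--(v)); this is a star-refinement argument carried out entirely in $\O\Cauchy X$ using relations (ii)--(iv), and it is the one nontrivial calculation in the paper's proof. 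Defining the left adjoint directly on generators in this way also makes the adjunction immediate ($\rho^*r'=\mathrm{id}$ by relation (v), $r'\rho^*\le\mathrm{id}$ since $\vartriangleleft$ is contained in $\le$), whereas your route must first invert ${\rho^*}|_R$. I would rework the proof around that explicit generator-level map.
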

\begin{proof}
 We claim there is a well-defined frame map $r\colon \O\Cvar X \to \O\Cauchy X$ sending $[a \in F]$ to $\bigvee_{b \vartriangleleft a} [b \in F]$.
 By part (ii) of \cref{lem:uniformly_below_properties} we see it satisfies relations (i) and (ii) and it satisfies (iii) simply due to the similar relation on $\O\Cauchy X$.
 For condition (iv) we can use that star-refinement axiom to show that $\{b \mid b \vartriangleleft a \in U\}$ is a uniform cover whenever $U$ is.
 Finally, relation (v) holds since $\vartriangleleft$ interpolates.
 
 We now show that $r$ is left adjoint to $\rho^*$. It is clear that $\rho^* r = \id_{\O\Cvar X}$ by condition (v) on $\O\Cvar X$.
 On the other hand, $r\rho^*([a \in F]) = \bigvee_{b \vartriangleleft a} [b \in F] \le [a \in F]$ and so $r\rho^* \le \id_{\O\Cauchy X}$, as required.
 
 By the universal property of the uniform reflection, the image of $r\colon \O\Cvar X \to \O\Cauchy X$ lies in the the subframe $\O\Upsilon\Cauchy X$ of elements $B$ such that $B \le \bigvee_{A \vartriangleleft B} A$.
 We must show that every such element lies in the image of $r$.
 
 Suppose $B \le \bigvee_{A \vartriangleleft B} A$. Then since the elements $[a \in F]$ form a base, we have $B \le \bigvee_{[a \in F] \vartriangleleft B} [a \in F]$.
 Now suppose $[a \in F] \vartriangleleft B$. Then $\st([a \in F], \digamma_*[U]) \le B$ for some uniform cover $U$ on $X$. Explicitly, $\st([a \in F], \digamma_*[U]) = \bigvee_{u \in U,\, a \between u} [u \in F]$.
 Now let $V$ be a uniform cover such that $V^\star \le U$ and take $v \in V$ such that $v \between a$.
 Then $\st(a \wedge v, V) \le \st(v, V) \in U$, and clearly $\st(a \wedge v, V) \between a$, so that $[\st(a \wedge v, V) \in F] \le \st([a \in F], \digamma_*[U]) \le B$.
 Thus, $[a \wedge v \in F] \vartriangleleft [b \in F] \le B$ for some $b \in \O X$
 and so we have $[a \wedge v \in F] \le \bigvee_{[c \in F] \vartriangleleft [b \in F] \le B} [c \in F] \le \bigvee_{[b \in F] \le B} \bigvee_{c \vartriangleleft b} [c \in F] = r\rho^*(B)$,
 where the second inequality holds since $\digamma^*$ preserves the uniformly below relation.
 
 Taking the join over all such $v$ we then find $\bigvee_{v \in V,\, v \between a} [a \wedge v \in F] \le r\rho^*(B)$.
 Now note that $\bigvee_{v \in V,\, v \between a} [a \wedge v \in F] = \bigvee_{v \in V} [a \wedge v \in F] = [a \in F] \wedge \bigvee_{v \in V} [v \in F] = [a \in F]$ where the first equality follows from condition (iii) for $\Cauchy X$, the second equality is from condition (ii) and the third equality is from condition (iv).
 Therefore, $[a \in F] \le r\rho^*(B)$ and so $r\rho^*(B) \le B \le \bigvee_{[a \in F] \vartriangleleft B} [a \in F] \le r\rho^*(B)$.
 Thus, $B$  is in the image of $r$ as required.
\end{proof}

\begin{remark}
 The adjunction of the frame homomorphisms above gives $\rho \dashv \upsilon_{\Cauchy X}$ in $\Loc$. This can be understood as a pointfree manifestation of the fact that regular Cauchy filters are the same as \emph{minimal} Cauchy filters, with the composite $\rho\upsilon_{\Cauchy X}$ associating each Cauchy filter to the smallest Cauchy filter contained in it.
\end{remark}

We now observe that $\Cauchy$ and $\Cvar$ are functorial.
\begin{proposition}\label{prop:Cauchy_X_is_functorial}
 The construction of the locale of Cauchy filters gives rise to a functor $\Cauchy\colon \UnifLoc \to \PreUnifLoc$. Furthermore, the maps $\digamma\colon X \to \Cauchy X$ assemble into a natural transformation from the inclusion $\UnifLoc \hookrightarrow \PreUnifLoc$ to $\Cauchy$.
\end{proposition}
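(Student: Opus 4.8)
The plan is to build the functor straight from the presentation of $\O\Cauchy X$, so that both functoriality and the naturality of $\digamma$ can be read off the behaviour of the generators. Given a uniform morphism $f\colon (X,\U)\to(Y,\V)$, I would define $\Cauchy f\colon \Cauchy X\to\Cauchy Y$ to be the locale map whose frame homomorphism sends each generator $[b\in F]$ of $\O\Cauchy Y$ (for $b\in\O Y$) to $[f^*(b)\in F]\in\O\Cauchy X$. The substance of the construction is checking that this assignment respects the four defining relations of $\O\Cauchy Y$, i.e.\ that each relation is sent to a valid (in)equality in $\O\Cauchy X$. Relations (i) and (ii) are immediate from $f^*(1)=1$ and the preservation of binary meets by $f^*$, using the matching relations on $\O\Cauchy X$.

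The two relations that actually use the hypotheses on $f$ are (iii) and (iv), and these form the main obstacle. For (iii), I must show the image $[f^*(b)\in F]$ of the left-hand side lies below the image $\bigvee\{1\mid b>0\}$ of the right-hand side; relation (iii) for $\O\Cauchy X$ gives $[f^*(b)\in F]\le\bigvee\{1\mid f^*(b)>0\}$, and since any frame homomorphism reflects positivity (if $f^*(b)\le\bigvee A$ then $b\le\bigvee A$ forces $A$ inhabited whenever $f^*(b)>0$), we have $f^*(b)>0\implies b>0$, which yields the required inequality. For (iv), given $V\in\V$ I must check $\bigvee_{v\in V}[f^*(v)\in F]=1$ in $\O\Cauchy X$; here I use that $f$ is a uniform morphism, so ${\downarrow}f^*[V]\in\U$, apply relation (iv) of $\O\Cauchy X$ to this cover, and note that every generator $[w\in F]$ with $w\le f^*(v)$ satisfies $[w\in F]\le[f^*(v)\in F]$ by the monotonicity coming from (ii).

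With $\Cauchy f$ in hand, the organising identity is $(\Cauchy f)^*([v\in F])=[f^*(v)\in F]=\digamma_*(f^*(v))$ for $v\in\O Y$, i.e.\ $(\Cauchy f)^*\circ\digamma_*=\digamma_*\circ f^*$ (with $\digamma_*$ taken for $Y$ on the left and for $X$ on the right). From this I read off that $\Cauchy f$ is a morphism in $\PreUnifLoc$: it suffices to test the covering-morphism condition on a base, and ${\downarrow}(\Cauchy f)^*[{\downarrow}\digamma_*[V]]={\downarrow}\digamma_*[{\downarrow}f^*[V]]$, which is a basic cover of $\Cauchy X$ by \cref{lem:preuniform_structure_on_CL} since ${\downarrow}f^*[V]\in\U$. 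Functoriality is then routine on generators: $\Cauchy(\id)$ fixes every $[a\in F]$, and $(\Cauchy(g\circ f))^*[c\in F]=[f^*g^*(c)\in F]=(\Cauchy f)^*(\Cauchy g)^*[c\in F]$. Finally, naturality of $\digamma$ reduces to $\digamma^*(\Cauchy f)^*=f^*\digamma^*$ on generators (both send $[b\in F]$ to $f^*(b)$), giving $\Cauchy f\circ\digamma=\digamma\circ f$ with the evident indices; that each $\digamma\colon X\to\Cauchy X$ is itself a morphism in $\PreUnifLoc$ is already supplied by \cref{lem:preuniform_structure_on_CL}. I expect everything beyond the well-definedness checks for relations (iii) and (iv) to be purely formal.
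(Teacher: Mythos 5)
Your proposal is correct and is essentially the paper's argument: the paper defines $\Cauchy f$ via the universal property of the classifying locale (noting that $\digamma^X_* f^*$ is a $\Cauchy X$-indexed Cauchy filter on $Y$), which is exactly your generator-by-generator verification of relations (i)--(iv) unfolded. Your explicit checks of positivity reflection for (iii), of the pre-uniform morphism condition via $(\Cauchy f)^*\digamma_* = \digamma_* f^*$, and of naturality on generators all match what the paper leaves implicit.
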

\begin{proof}
 Let $f\colon X \to Y$ be a morphism of uniform locales. Recall that $\digamma^X_*\colon \O X \to \O\Cauchy X$ is an $\Cauchy X$-indexed Cauchy filter on $X$,
 from which it easily follows that $\digamma^X_* f^*$ is an $\O\Cauchy X$-indexed Cauchy filter on $Y$.
 By the universal property of $\Cauchy Y$ we obtain a frame homomorphism from $\O\Cauchy Y$ to $\O\Cauchy X$. Explicitly this sends $[a \in F]$ to $[f^*(a) \in F]$.
 We define $\Cauchy f$ to be a corresponding locale map.
 
 It is now easy to see this definition respects identities and composition
 and that $\digamma$ is natural.
\end{proof}

\begin{corollary}\label{cor:CX_is_functorial}
 Similarly, the regular Cauchy filter locale yields a functor $\Cvar\colon \UnifLoc \to \UnifLoc$ and the maps $\gamma\colon X \to \Cvar X$ give a natural transformation from the identity to $\Cvar$.
\end{corollary}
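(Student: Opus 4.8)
The plan is to realise $\Cvar$ as the composite of two functors we already have in hand, rather than to reconstruct it from scratch. By \cref{prop:uniform_reflection_of_Cauchy_X} we know that $\Cvar X = \Upsilon\Cauchy X$, where $\Upsilon\colon \PreUnifLoc \to \UnifLoc$ is the uniform reflector supplied by \cref{prop:uniform_reflection} and $\Cauchy\colon \UnifLoc \to \PreUnifLoc$ is the functor of \cref{prop:Cauchy_X_is_functorial}. I would therefore simply \emph{define} $\Cvar \coloneqq \Upsilon \circ \Cauchy$, with $\Cvar f \coloneqq \Upsilon\Cauchy f$ on morphisms. Functoriality is then automatic, being inherited from that of $\Upsilon$ and of $\Cauchy$, and on objects this composite agrees with the regular Cauchy filter construction by the cited proposition.

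For the naturality of $\gamma$, the key step is to identify $\gamma_X$ with the composite $\upsilon_{\Cauchy X} \circ \digamma_X$, where $\upsilon$ denotes the unit of the reflection. This is a short frame-level calculation: using $\digamma^* = \gamma^* \rho^*$ together with the facts from \cref{prop:uniform_reflection_of_Cauchy_X} that $\upsilon_{\Cauchy X}^* = r$ is left adjoint to $\rho^*$ and satisfies $\rho^* r = \id$, we obtain $(\upsilon_{\Cauchy X} \circ \digamma_X)^* = \digamma_X^* \, \upsilon_{\Cauchy X}^* = \gamma_X^* \rho^* r = \gamma_X^*$, whence $\gamma_X = \upsilon_{\Cauchy X} \circ \digamma_X$.

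Granting this identification, naturality is purely formal. Writing $\iota\colon \UnifLoc \hookrightarrow \PreUnifLoc$ for the inclusion, the transformation $\digamma\colon \iota \Rightarrow \Cauchy$ is natural by \cref{prop:Cauchy_X_is_functorial}, and whiskering the reflection unit $\upsilon$ by $\Cauchy$ yields a natural transformation with components $\upsilon_{\Cauchy X}\colon \Cauchy X \to \iota\Upsilon\Cauchy X = \iota\Cvar X$ (naturality of $\upsilon$ being a general property of reflective subcategories). Hence $\iota\gamma$ is the vertical composite of these two natural transformations and is therefore itself natural; since $\iota$ is fully faithful, this gives naturality of $\gamma\colon \id_{\UnifLoc} \Rightarrow \Cvar$.

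I expect essentially all the content to be concentrated in the identification $\gamma_X = \upsilon_{\Cauchy X} \circ \digamma_X$ above, which is exactly the point at which \cref{prop:uniform_reflection_of_Cauchy_X} is doing the real work. It is worth noting why the more obvious route — directly checking, in imitation of \cref{prop:Cauchy_X_is_functorial}, that $a \mapsto [f^*(a) \in F]$ defines a $\Cvar X$-indexed regular Cauchy filter on $Y$ — is less convenient: relations (i)--(iv) follow just as for $\Cauchy$ (using that $f^*$ preserves finite meets, that $f^*(a) > 0$ implies $a > 0$, and that $f$ is a uniform morphism so that ${\downarrow}f^*[U]$ is a uniform cover), but the regularity relation (v) would require lifting the relation $\vartriangleleft$ through $f^*$, which is not straightforward and is precisely the bookkeeping that the reflection already absorbs.
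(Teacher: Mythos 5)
Your proposal is correct and follows essentially the same route as the paper, which likewise simply sets $\Cvar = \Upsilon \circ \Cauchy$ and observes that $\gamma$ is (up to the canonical isomorphism $\Upsilon X \cong X$ for uniform $X$) the composite of $\digamma$ with the reflection unit. Your more explicit frame-level verification that $\gamma_X = \upsilon_{\Cauchy X} \circ \digamma_X$ and the whiskering argument for naturality are just a careful spelling-out of the paper's one-line remark.
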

\begin{proof}
 Simply set $\Cvar = \Upsilon \circ \Cauchy$ and note that $\gamma$ is equal to $\Upsilon\digamma$ (up to isomorphism).
\end{proof}

We shall now attempt to show $\Cvar X$ is the completion of $X$.
To see this we note that some of the results that we have seen to hold for $\gamma$ actually hold for any strongly dense uniform embedding.
We will proceed in a similar manner to \cite[Chapter VIII]{picado2012book} for the next few results.
Let us begin by proving an analogue of \cref{lem:uniform_structure_on_CL}.

\begin{lemma}\label{lem:base_from_strongly_dense_uniform_embedding}
 If $j\colon (X, \U) \hookrightarrow (Y, \V)$ is a strongly dense embedding of pre-uniform locales, then the downsets of the form ${\downarrow}j_*[U]$ for $U \in \U$ form a base for $\V$.
\end{lemma}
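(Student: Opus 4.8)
The plan is to verify the two defining conditions of a base directly: that each ${\downarrow}j_*[U]$ genuinely lies in $\V$, and that every $W \in \V$ contains some ${\downarrow}j_*[U]$. Throughout I would exploit that $j$, being a uniform embedding, carries the initial uniformity, so every $U \in \U$ contains ${\downarrow}j^*[W]$ for some $W \in \V$; and that $j^* \dashv j_*$ with $j^* j_* j^* = j^*$. The two facts about positivity I will lean on are strong density ($a > 0 \implies j^*(a) > 0$) and the general fact that any frame homomorphism reflects positivity ($j^*(a) > 0 \implies a > 0$, since $a \le \bigvee A$ forces $j^*(a) \le \bigvee j^*[A]$, so $j^*(a) > 0$ makes $j^*[A]$ and hence $A$ inhabited). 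Overtness of $Y$ (part of its being a pre-uniform locale) will let me reduce inequalities of opens to inequalities tested on positive opens.

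For the first condition, I would fix $U \in \U$ and choose $W_0 \in \V$ with ${\downarrow}j^*[W_0] \subseteq U$. For each $w_0 \in W_0$ we have $j^*(w_0) \in U$, hence $j_*(j^*(w_0)) \in j_*[U]$, and since the unit gives $w_0 \le j_*(j^*(w_0))$ we conclude $w_0 \in {\downarrow}j_*[U]$. Thus $W_0 \subseteq {\downarrow}j_*[U]$, which shows ${\downarrow}j_*[U]$ is a covering downset and, being above the element $W_0$ of the filter $\V$, itself lies in $\V$.

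The substance is in the second condition, and the crux is the estimate $j_*(j^*(w')) \le \st(w', W')$ for $w'$ in a strong cover $W'$. This is the main obstacle, because the naive argument wants to case-split on whether $c \between w''$, which is not available constructively. I would instead argue via overtness: for each $w'' \in W'$ it suffices to bound $j_*(j^*(w')) \wedge w''$ on positive opens, and any positive $e \le j_*(j^*(w')) \wedge w''$ satisfies $j^*(e) > 0$ with $j^*(e) \le j^* j_* j^*(w') = j^*(w')$ and $j^*(e) \le j^*(w'')$, whence $j^*(w' \wedge w'') > 0$ and so $w' \between w''$ by reflection of positivity; therefore $e \le w'' \le \st(w', W')$. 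Summing over positive $e$ and then over $w'' \in W'$ (using $\bigvee W' = 1$ and distributivity) yields the claim.

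Finally I would assemble the second condition: given $W \in \V$, use the star-refinement axiom to pick a strong $W' \in \V$ with $(W')^\star \subseteq W$, and set $U = {\downarrow}j^*[W'] \in \U$. For $a \in U$ we have $a \le j^*(w')$ for some $w' \in W'$, so $j_*(a) \le j_*(j^*(w')) \le \st(w', W') \in (W')^\star \subseteq W$; since $W$ is a downset this gives $j_*(a) \in W$, hence ${\downarrow}j_*[U] \subseteq W$. Together with the first condition this exhibits $\{{\downarrow}j_*[U] \mid U \in \U\}$ as a base for $\V$. I expect the positivity-reflection observation and the overtness reduction in the key estimate to be the only genuinely delicate points; everything else is adjunction bookkeeping.
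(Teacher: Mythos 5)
Your proposal is correct and follows essentially the same route as the paper: establish membership of ${\downarrow}j_*[U]$ in $\V$ via a $W_0 \in \V$ with ${\downarrow}j^*[W_0] \subseteq U$, then use a star-refinement $W'$ and the key inequality $j_*j^*(w') \le \st(w', W')$ to get ${\downarrow}j_*[{\downarrow}j^*[W']] \subseteq W$. The only difference is that the paper derives that key inequality by showing $j_*j^*(w')$ meets only the opens that $w'$ meets, hence $j_*j^*(w') \le \wkclo{w'} \le \st(w', W')$ via \cref{lem:star_is_inflationary}, whereas you prove it directly by the overtness/positivity-reflection computation --- which is in effect an inlining of the same argument.
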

\begin{proof}
 We first observe that the downsets ${\downarrow}j_*[U]$ are $\V$-uniform covering downsets. Since $j$ is a uniform embedding, every $U \in \U$ contains ${\downarrow} j^*[V]$ for some $V \in \V$.
 We then have ${\downarrow}j_*[U] \supseteq {\downarrow} j_*j^*[V] \supseteq V$ and hence ${\downarrow}j_*[U] \in \V$.
 
 Now take $V \in \V$. There is a $W \in \V$ such that $W^\star \subseteq V$. Then ${\downarrow} j^*[W] \in \U$ and we claim ${\downarrow} j_*j^*[W] \subseteq V$.
 An element of $j_*j^*[W]$ is of the form $j_*j^*(w)$ for $w \in W$.
 Note that if $j_*j^*(w) \between a$, then by strong density of $j$ we have $j^*(w) = j^*j_*j^*(w) \between j^*(a)$, which in turn means $w \between a$.
 Thus, the open sublocale $j_*j^*(w)$ is contained in $\wkclo{w}$.
 But $w \in W$ satisfies $\st(w, W) \le v$ for some $v \in V$. Now by \cref{lem:star_is_inflationary} we have $j_*j^*(w) \le \wkclo{w} \le v$ and hence $j_*j^*(w) \in V$, as claimed.
\end{proof}

Now recall that the right adjoint $\gamma_*$ is an indexed regular Cauchy filter. In fact, this is true of every strongly dense uniform embedding, which helps explain why there is a link between Cauchy filters and completeness in the first place.
\begin{lemma}\label{lem:right_adjoint_strongly_dense_embedding_cauchy}
 Let $j\colon (X, \U) \hookrightarrow (Y, \V)$ be a strongly dense embedding of uniform locales. Then the right adjoint $j_*\colon \O X \to \O Y$ is an $Y$-indexed regular Cauchy filter.
\end{lemma}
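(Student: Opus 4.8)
The plan is to verify directly that the assignment $a \mapsto j_*(a)$ satisfies the five defining relations (i)--(v) of the presentation of $\Cvar X$, now interpreted with values in $\O Y$; that is, to check that $j_*$ is a model of the theory of regular Cauchy filters on $X$. Relations (i) and (ii) are immediate: $j_*$ is a right adjoint and so preserves all meets, in particular the top element ($j_*(1) = 1$) and binary meets ($j_*(a \wedge b) = j_*(a) \wedge j_*(b)$).

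For relation (iii), $j_*(a) \le \bigvee\{1 \mid a > 0\}$, I would begin from the adjunction unit $a \le {!}(\exists(a))$ in $\O X$, apply the monotone map $j_*$, and then invoke strong density of $j$ in the form $j_*{!}(p) \le {!}(p)$ (with $p = \exists(a)$); the composite lands inside $\bigvee\{1 \mid a > 0\}$ as computed in $\O Y$. Relation (iv), $\bigvee_{u \in U} j_*(u) = 1$ for each $U \in \U$, is essentially already established: by \cref{lem:base_from_strongly_dense_uniform_embedding} each ${\downarrow}j_*[U]$ is a member of $\V$ and hence a covering downset, so $\bigvee_{u\in U} j_*(u) = \bigvee {\downarrow} j_*[U] = 1$.

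The only substantive point is the regularity relation (v), $j_*(a) \le \bigvee_{b \vartriangleleft a} j_*(b)$, and this is where the argument really lives. The idea is to exploit admissibility of the target: since $Y$ is uniform, $j_*(a) = \bigvee\{c \mid c \vartriangleleft j_*(a)\}$, so it suffices to dominate each such $c$ by some $j_*(b)$ with $b \vartriangleleft a$ in $X$. The key observation is that $b := j^*(c)$ works. Because $j$ is a morphism of (pre-)uniform locales, its inverse image preserves the uniformly below relation, so $c \vartriangleleft j_*(a)$ yields $j^*(c) \vartriangleleft j^*j_*(a) = a$, using that $j$ is an embedding and hence $j^*j_* = \id$. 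Meanwhile the adjunction unit gives $c \le j_*j^*(c) = j_*(b)$. Thus $c \le j_*(b) \le \bigvee_{b \vartriangleleft a} j_*(b)$, and taking the join over all $c \vartriangleleft j_*(a)$ establishes (v); the reverse inequality is trivial.

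I expect relation (v) to be the main obstacle, or rather the step requiring the right idea. The naive attempt — applying $j_*$ to the admissibility expansion $a = \bigvee_{b \vartriangleleft a} b$ of $X$ — fails because $j_*$ does not preserve joins. The trick is instead to run admissibility on the $Y$ side and pull the witnessing opens back along $j^*$, which \emph{does} preserve $\vartriangleleft$; the compatibility $j^*j_* = \id$ together with the unit $c \le j_*j^*(c)$ then glue the two sides together without ever demanding that $j_*$ commute with joins.
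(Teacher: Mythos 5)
Your proof is correct and follows essentially the same route as the paper: (i)--(iii) from meet-preservation and strong density, (iv) from the uniform-embedding property, and (v) by running admissibility on $\O Y$, pulling the witnesses back along $j^*$ (which preserves $\vartriangleleft$), and using $j^*j_* = \id$ together with the unit $c \le j_*j^*(c)$. The only cosmetic difference is that you cite \cref{lem:base_from_strongly_dense_uniform_embedding} for (iv) where the paper re-derives the same inequality inline.
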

\begin{proof}
 Since $j_*$ preserves meets it satisfies conditions (i) and (ii) and strong density means $j_*(a) \le {!}^*{\exists}(a)$ and hence condition (iii) holds.
 
 To see condition (iv) holds recall that $j$ is a uniform embedding and so each uniform covering downset $U \in \U$ satisfies $U \supseteq {\downarrow} j^*[V]$ for some $V \in \V$.
 We then have $\bigvee_{u \in U} j_*(u) \ge \bigvee_{v \in V} j_*(j^*(v)) \ge \bigvee_{v \in V} v = 1$, as required.
 
 Now note that if $b \vartriangleleft j_*(a)$, then $j^*(b) \vartriangleleft j^*j_*(a) = a$ and hence
 $j_*(a) = \bigvee_{b \vartriangleleft j_*(a)} b \le \bigvee_{j^*(b) \vartriangleleft a} b \le \bigvee_{j^*(b) \vartriangleleft a} j_*j^*(b) \le \bigvee_{b' \vartriangleleft a} j_*(b')$,
 which is the regularity condition (v).
\end{proof}

We are now able to prove that $\Cvar X$ is, in a sense, the largest uniform locale into which $X$ can be strongly densely embedded.

\begin{proposition}\label{prop:dense_uniform_embedding_factors_through_gamma}
 Let $j\colon X \hookrightarrow Y$ be any strongly dense embedding of uniform locales. Then $\gamma\colon X \hookrightarrow \Cvar X$ factors uniquely through $j$ to give a strongly dense uniform embedding $k\colon Y \hookrightarrow \Cvar X$.
\end{proposition}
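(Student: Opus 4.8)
The plan is to build $k$ directly from the universal property of the classifying locale $\Cvar X$ and then verify in turn that it factors $\gamma$, is unique, and is a strongly dense uniform embedding. By \cref{lem:right_adjoint_strongly_dense_embedding_cauchy}, the right adjoint $j_*\colon \O X \to \O Y$ is an $(\O Y)$-indexed regular Cauchy filter on $X$, so the universal property of $\Cvar X$ yields a frame homomorphism $k^*\colon \O\Cvar X \to \O Y$ determined by $[a \in F] \mapsto j_*(a)$, and hence a locale map $k\colon Y \to \Cvar X$. Since $j$ is an embedding we have $j^*j_* = \id_{\O X}$, so on generators $(k \circ j)^*([a \in F]) = j^*j_*(a) = a = \gamma^*([a \in F])$, giving $k \circ j = \gamma$. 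Uniqueness is then immediate: $j$ is a strongly dense uniform map, hence epic in $\UnifLoc$ by \cref{cor:strongly_dense_epic}, so any two uniform maps factoring $\gamma$ through $j$ must coincide.

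Next I would dispatch the two easier properties. For uniformity, recall from \cref{lem:uniform_structure_on_CL} that the covers ${\downarrow}\gamma_*[U]$ for $U \in \U$ form a base for $\Cvar X$; since $k^*\gamma_* = j_*$, pulling back such a cover gives ${\downarrow}k^*[{\downarrow}\gamma_*[U]] = {\downarrow}j_*[U]$, which lies in $\V$ by \cref{lem:base_from_strongly_dense_uniform_embedding}, so $k$ is a uniform morphism. For strong density, note that $\gamma = k \circ j$ is strongly dense, so $\gamma^* = j^* k^*$ preserves positivity; since $j^*(b) > 0$ forces $b > 0$ for any frame map, $c > 0$ implies $j^*k^*(c) > 0$ and hence $k^*(c) > 0$, which is strong density of $k$.

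The main work is to show that $k$ is a uniform embedding, i.e. that $k^*$ is surjective and that $\V$ is the initial uniformity along $k$. For surjectivity, fix $b \in \O Y$ and use admissibility of $Y$ to write $b = \bigvee_{c \vartriangleleft b} c$. For each $c \vartriangleleft b$ there is, by \cref{lem:base_from_strongly_dense_uniform_embedding}, a cover ${\downarrow}j_*[U]$ contained in a uniform cover witnessing $c \vartriangleleft b$, so that $\st(c, {\downarrow}j_*[U]) \le b$; by \cref{lem:star_is_inflationary} we obtain $c \le \st(c, {\downarrow}j_*[U]) = \bigvee\{j_*(u) \mid u \in U,\ c \between j_*(u)\} \le b$, exhibiting $c$ as a join of elements $j_*(u) = k^*([u \in F])$ that lie below $b$ and in the image of $k^*$. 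Taking the join over all $c \vartriangleleft b$ shows $b$ is a join of image elements, and since the image of a frame homomorphism is a subframe, $b$ itself lies in the image; thus $k^*$ is surjective and $k$ is a locale embedding. Finally, the initial uniformity along $k$ is generated by the covers ${\downarrow}k^*[{\downarrow}\gamma_*[U]] = {\downarrow}j_*[U]$, which is exactly a base for $\V$ by \cref{lem:base_from_strongly_dense_uniform_embedding}, so $k$ is initial and hence a uniform embedding. The surjectivity step is where the real content lies, as it is precisely where admissibility of $Y$ and the base description of $\V$ from \cref{lem:base_from_strongly_dense_uniform_embedding} must be combined.
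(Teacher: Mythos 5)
Your proposal is correct and follows essentially the same route as the paper: construct $k$ from the indexed regular Cauchy filter $j_*$ via the universal property of $\Cvar X$, check $k^*\gamma_* = j_*$ so that \cref{lem:base_from_strongly_dense_uniform_embedding} gives uniformity and initiality, deduce surjectivity of $k^*$ from admissibility of $Y$ together with the base of covers ${\downarrow}j_*[U]$, and get uniqueness from \cref{cor:strongly_dense_epic}. The only difference is that you spell out the strong-density and subframe-closure steps in slightly more detail than the paper does.
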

\begin{proof}
 By \cref{lem:right_adjoint_strongly_dense_embedding_cauchy} we have that $j_*\colon \O X \to \O Y$ is an indexed regular Cauchy filter. Then by the universal property of $\Cvar X$ this gives a locale map $k\colon Y \to \Cvar X$
 whose corresponding frame homomorphism is defined by $[a \in F] \mapsto j_*(a)$. This indeed composes with $j^*$ to give $\gamma^*$, since $j^*j_* = \id_{\O X}$.
 
 Pulling back a basic uniform cover on $\Cvar X$ along $k$ gives ${\downarrow}\{k^*\gamma_*(u) \mid u \in U\}$ for $U$ a uniform cover on $X$. But note that $k^*\gamma_* = j_*$ and so applying \cref{lem:base_from_strongly_dense_uniform_embedding} to $j$ these give a base of uniform covers of $Y$. Hence $k$ is uniform and initial.
 Now to show that $k$ is a uniform embedding it remains to prove $k^*$ is surjective. Observe that if $b \vartriangleleft a$ in $\O Y$ then $b \le \bigvee\{v \in V \mid v \between b\} \le a$ for some basic uniform cover $V$ of $Y$ and hence each $a \in \O Y$ is a join of elements of the form $k^*\gamma_*(u)$.
 Consequently, $k^*$ is indeed surjective.
 Finally, $k$ is strongly dense since $\gamma$ is and the map $k$ is unique since $j$ is epic by \cref{cor:strongly_dense_epic}.
\end{proof}

It follows that $\Cvar X$ is indeed the completion of $X$.

\begin{theorem}\label{thm:completion}
 The uniform locale $\Cvar X$ is the unique completion of $X$.
\end{theorem}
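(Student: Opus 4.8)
The plan is to deduce everything from the factorisation property established in \cref{prop:dense_uniform_embedding_factors_through_gamma}, together with the fact that strongly dense uniform maps are epic (\cref{cor:strongly_dense_epic}). Two things must be shown: that $(\Cvar X, \gamma)$ is a completion at all, and that it is the unique one. Half of ``being a completion'', namely that $\gamma\colon X \hookrightarrow \Cvar X$ is a strongly dense uniform embedding, is already recorded in \cref{lem:uniform_structure_on_CL}, so the real content is to verify that $\Cvar X$ is complete.

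For completeness, I would take an arbitrary strongly dense uniform embedding $h\colon \Cvar X \hookrightarrow Z$ into a uniform locale $Z$ and show it is an isomorphism. First observe that strongly dense uniform embeddings are closed under composition (strong density composes since $(gf)^* = f^*g^*$, localic embeddings compose, and the composite of initial lifts is initial), so $h \circ \gamma\colon X \hookrightarrow Z$ is again a strongly dense uniform embedding. Applying \cref{prop:dense_uniform_embedding_factors_through_gamma} to $h \circ \gamma$ yields a strongly dense uniform embedding $k\colon Z \hookrightarrow \Cvar X$ with $k \circ h \circ \gamma = \gamma$. Since $\gamma$ is epic by \cref{cor:strongly_dense_epic}, this forces $k \circ h = \id_{\Cvar X}$. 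But $h$ is itself strongly dense and hence epic, and from $(h \circ k)\circ h = h = \id_Z \circ h$ right-cancellation of $h$ gives $h \circ k = \id_Z$. Thus $h$ is an isomorphism with inverse $k$, and $\Cvar X$ is complete.

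For uniqueness, suppose $(Y, j)$ is any completion, so that $Y$ is complete and $j\colon X \hookrightarrow Y$ is a strongly dense uniform embedding. \cref{prop:dense_uniform_embedding_factors_through_gamma} again provides a strongly dense uniform embedding $k\colon Y \hookrightarrow \Cvar X$ with $k \circ j = \gamma$. Since $Y$ is complete, every strongly dense uniform embedding out of it is an isomorphism, so $k$ is an isomorphism; as it intertwines the two embeddings via $k \circ j = \gamma$, the completion is unique up to the expected compatible isomorphism.

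I do not expect any genuine obstacle here: all of the substantive work sits in \cref{prop:dense_uniform_embedding_factors_through_gamma} and \cref{lem:uniform_structure_on_CL}, and what remains is the standard ``an epic split monomorphism is invertible'' manoeuvre, familiar from the uniqueness of completions of (weakly) Hausdorff uniform spaces. The only points needing a line of justification are that a composite of strongly dense uniform embeddings is again one, and the two right-cancellations using that strongly dense uniform maps are epic; both are routine.
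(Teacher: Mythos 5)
Your proposal is correct and follows essentially the same route as the paper's own proof: completeness is obtained by applying \cref{prop:dense_uniform_embedding_factors_through_gamma} to the composite embedding and cancelling against the epimorphism $\gamma$ to exhibit the given embedding as a split mono that is also epi, and uniqueness is the same one-line application of that proposition together with completeness of the other candidate. The only difference is that you spell out the closure of strongly dense uniform embeddings under composition and the second right-cancellation explicitly, which the paper leaves implicit.
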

\begin{proof}
 We first show that $\Cvar X$ is complete. Suppose $e\colon \Cvar X \hookrightarrow Y$ is a strongly dense uniform embedding. Then by \cref{prop:dense_uniform_embedding_factors_through_gamma} we have that $e\gamma$ factors through $\gamma$ to give a uniform map $f\colon Y \to \Cvar X$ such that $\gamma = fe\gamma$. But $\gamma$ is strongly dense and hence epic, so that $fe = \id_{\Cvar X}$ and $e$ is a split monomorphism. But $e$ is also an epimorphism and thus an isomorphism. Therefore, $\Cvar X$ is complete.
 
 Now we show uniqueness. Suppose $e'\colon X \hookrightarrow C$ is a completion of $X$. Then by \cref{prop:dense_uniform_embedding_factors_through_gamma} there is a strongly dense uniform embedding $f'\colon C \hookrightarrow \Cvar X$ such that $f' e' = \gamma$. This is an isomorphism by the completeness of $C$.
\end{proof}

\begin{theorem}
 Complete uniform locales form a reflective subcategory $\CUnifLoc$ of $\UnifLoc$ with $\Cvar$ as the reflector and $\gamma$ as the unit.
\end{theorem}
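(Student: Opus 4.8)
The plan is to verify directly the universal property characterising $\gamma_X \colon X \to \Cvar X$ as a reflection arrow. First I would observe that by \cref{thm:completion} the locale $\Cvar X$ is always complete, so the functor $\Cvar\colon \UnifLoc \to \UnifLoc$ of \cref{cor:CX_is_functorial} corestricts to a functor $\Cvar\colon \UnifLoc \to \CUnifLoc$; and $\CUnifLoc$ is by definition a full subcategory of $\UnifLoc$. It then remains to show that for every uniform locale $X$, every complete uniform locale $Z$ and every uniform map $f\colon X \to Z$, there is a unique uniform map $\bar f\colon \Cvar X \to Z$ with $\bar f \circ \gamma_X = f$.

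The key point is that $\gamma$ restricts to an isomorphism on complete objects. Indeed, for complete $Z$ the map $\gamma_Z\colon Z \to \Cvar Z$ is a strongly dense uniform embedding by \cref{lem:uniform_structure_on_CL}, and hence an isomorphism by the very definition of completeness. I would then set $\bar f = \gamma_Z^{-1} \circ \Cvar f$, where $\Cvar f\colon \Cvar X \to \Cvar Z$. Naturality of $\gamma$ (\cref{cor:CX_is_functorial}) gives $\Cvar f \circ \gamma_X = \gamma_Z \circ f$, whence $\bar f \circ \gamma_X = \gamma_Z^{-1} \circ \gamma_Z \circ f = f$, as required. Uniqueness is immediate because $\gamma_X$ is a strongly dense uniform map and is therefore epic by \cref{cor:strongly_dense_epic}. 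Assembling these universal arrows into the statement that $\Cvar$ is left adjoint to the inclusion $\CUnifLoc \hookrightarrow \UnifLoc$ with unit $\gamma$ is then routine category theory.

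I do not expect any genuine obstacle here, since the substantive work has already been carried out: completeness of $\Cvar X$, the uniform structure making $\gamma$ a strongly dense uniform embedding, and the fact that strongly dense uniform maps are epic. The only thing requiring a little care is checking that the reflector defined on morphisms in \cref{cor:CX_is_functorial} is compatible with these universal arrows, i.e.\ that $\bar f$ agrees with $\Cvar f$ up to the identification $\gamma_Z^{-1}$; but this is exactly what the naturality square for $\gamma$ delivers, so no separate verification beyond \cref{cor:CX_is_functorial} is needed.
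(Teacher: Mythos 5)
Your argument is correct and is essentially the paper's own proof: completeness of $\Cvar X$ from \cref{thm:completion}, the observation that $\gamma_Z$ is an isomorphism for complete $Z$ because it is a strongly dense uniform embedding, the factorisation $\gamma_Z^{-1}\circ\Cvar f$ via naturality of $\gamma$, and uniqueness from \cref{cor:strongly_dense_epic}. Nothing further is needed.
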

\begin{proof}
 By \cref{thm:completion}, $\Cvar X$ is complete. Moreover, $\gamma_Y$ is an isomorphism for a complete uniform locale $Y$, since $\gamma_Y$ is a strongly dense uniform embedding.
 Thus, any uniform map $f\colon X \to Y$ factors through $\gamma_X$ to give $\gamma_Y^{-1}\Cvar f$. Finally, this factorisation is unique since $\gamma_Y$ is epic.
\end{proof}

 We will occasionally wish to talk about the completion of a pre-uniform locale. This may be defined simply as the completion of its uniform reflection.
 On the other hand, the definition of $\Cauchy X$ works equally well when $X$ is only a pre-uniform locale and its fundamental properties still go through. With no essential modification the proof of \cref{prop:uniform_reflection_of_Cauchy_X} then shows that $\Upsilon \Cauchy X \cong \Cvar \Upsilon X$ in this general situation.
 Moreover, we have $\gamma_{\Upsilon X} \upsilon_X = \upsilon_{\Cauchy X} \digamma_X$.

Even with this generalised notion of completion we have the following result.
\begin{proposition}\label{prop:completion_preserves_products}
 The completion of pre-uniform locales preserves finite products.
\end{proposition}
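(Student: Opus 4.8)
The plan is to reduce the statement to the case of genuine uniform locales, where it follows cleanly from the uniqueness of completions, and then to transfer the result along the uniform reflection. Since a finite product is assembled from the empty product and binary products, it suffices to treat these two cases. The empty product is the terminal object $1$, whose completion is again $1$ (the one-point locale is already complete), so it is preserved; the remaining work is to handle a binary product $X \times Y$ formed in $\PreUnifLoc$.

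First I would settle the uniform case: for uniform locales $A$ and $B$ I claim $\Cvar(A \times B) \cong \Cvar A \times \Cvar B$. Because $\UnifLoc$ and $\CUnifLoc$ are reflective in $\PreUnifLoc$, they are closed under the limits that exist there, so the product $\Cvar A \times \Cvar B$ --- computed as the product of the underlying overt locales equipped with the initial (product) uniformity --- is again a complete uniform locale. By \cref{lem:uniform_structure_on_CL} the maps $\gamma_A$ and $\gamma_B$ are strongly dense uniform embeddings, and hence so is $\gamma_A \times \gamma_B \colon A \times B \to \Cvar A \times \Cvar B$: products of strongly dense maps are strongly dense, products of initial lifts are initial, and products of locale embeddings are locale embeddings (the coproduct of two surjective frame maps is surjective). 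By the uniqueness of completions in \cref{thm:completion}, this exhibits $\Cvar A \times \Cvar B$ as the completion of $A \times B$, giving the desired isomorphism.

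Next comes the reduction. The completion of a pre-uniform locale is $\Cvar\Upsilon$, so I must prove $\Cvar\Upsilon(X \times Y) \cong \Cvar\Upsilon X \times \Cvar\Upsilon Y$. By the uniform case the right-hand side is $\Cvar(\Upsilon X \times \Upsilon Y)$, so it is enough to show $\Cvar\Upsilon(X \times Y) \cong \Cvar(\Upsilon X \times \Upsilon Y)$. Applying the universal property of the uniform reflection to the strongly dense map $\upsilon_X \times \upsilon_Y \colon X \times Y \to \Upsilon X \times \Upsilon Y$ (a product of strongly dense maps, hence strongly dense) produces a comparison $t \colon \Upsilon(X \times Y) \to \Upsilon X \times \Upsilon Y$ with $t\,\upsilon_{X \times Y} = \upsilon_X \times \upsilon_Y$; since the latter is strongly dense, so is $t$. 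On frames $t^*$ is injective and identifies $\O(\Upsilon X \times \Upsilon Y)$ with the subframe of $\O\Upsilon(X \times Y)$ generated by the rectangles $a \oplus b$ of admissible opens.

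The heart of the matter --- and the step I expect to be the main obstacle --- is that $\Cvar t$ is an isomorphism even though $t$ itself is not; this is precisely the phenomenon that $\Upsilon$ alone fails to preserve products. I would prove it by comparing the presentations of the two classifying locales $\Cvar\Upsilon(X \times Y)$ and $\Cvar(\Upsilon X \times \Upsilon Y)$ directly, showing that the generators indexed by admissible opens of the product which are \emph{not} joins of admissible rectangles become redundant once the regularity relation is imposed. Intuitively, the admissible rectangles $a \oplus b$ form a base of the underlying locale, so a regular (round) Cauchy filter is determined by its restriction to them, and conversely a pair of regular Cauchy filters on the factors determines a unique round filter on the product. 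This is the localic counterpart of the classical fact that the minimal Cauchy filters on a product uniform space are exactly the products of minimal Cauchy filters on the factors, whereas general Cauchy filters (for instance ultrafilters) are not --- which is exactly why $\Cauchy$ alone cannot preserve products but $\Cvar\Upsilon$ can. The delicate point in carrying this out is to check that the uniform covers match, that is, that pulling the product uniform covers of $\Upsilon X \times \Upsilon Y$ back along $t$ yields a base for the uniformity of $\Upsilon(X \times Y)$, since this is precisely where the discrepancy between the two frames must be absorbed.
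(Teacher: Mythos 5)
Your terminal-object case and your ``uniform case'' ($\Cvar(A\times B)\cong \Cvar A\times \Cvar B$ for uniform $A,B$, via uniqueness of completions applied to the strongly dense uniform embedding $\gamma_A\times\gamma_B$ into the complete uniform locale $\Cvar A\times\Cvar B$) are sound and close in spirit to the paper. The genuine gap is in the reduction: the isomorphism $\Cvar\Upsilon(X\times Y)\cong\Cvar(\Upsilon X\times\Upsilon Y)$, which you yourself flag as ``the heart of the matter'', is never actually proved. You propose to compare the two classifying-locale presentations and argue that the generators not coming from admissible rectangles ``become redundant once the regularity relation is imposed'', but the verification you defer --- that the admissible rectangles form a base and that pulling the product uniform covers of $\Upsilon X\times\Upsilon Y$ back along $t$ yields a base for the uniformity of $\Upsilon(X\times Y)$ --- is precisely the mathematical content of the statement, not a routine check to be filled in later. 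As written, the argument stops exactly where the difficulty begins.

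The presentation-comparison route is also a detour. With the pieces you already have, the efficient argument is the paper's: the map $X\times Y\to\Cvar\Upsilon X\times\Cvar\Upsilon Y$ is a product of strongly dense maps, hence strongly dense; it factors through $\upsilon_{X\times Y}$, and a second factor of a strongly dense map is strongly dense, so the canonical map $\Upsilon(X\times Y)\to\Cvar\Upsilon X\times\Cvar\Upsilon Y$ is a strongly dense uniform embedding into a complete uniform locale and is therefore \emph{the} completion by \cref{thm:completion} --- no separate analysis of $\Upsilon X\times\Upsilon Y$ or of $\Cvar t$ is needed. (Equivalently, in your own setup: once $t$ is known to be a strongly dense uniform \emph{embedding}, $\Cvar t$ is an isomorphism immediately by \cref{prop:dense_uniform_embedding_factors_through_gamma} and \cref{thm:completion}; there is no need to manipulate presentations. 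The one thing that genuinely must be checked on either route is the uniform-embedding property of this comparison map, i.e.\ initiality and surjectivity of the frame map, and that is exactly the ``uniform covers match'' point you postponed.)
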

\begin{proof}
 The terminal pre-uniform locale is already a complete uniform locale, since reflective subcategories are closed under limits in the parent category.
 
 To show that $\Cvar\Upsilon$ preserves binary products, it suffices to show that the canonical map from $\Upsilon(X \times Y)$ to $\Cvar\Upsilon X \times \Cvar\Upsilon Y$ is a strongly dense uniform embedding.
 The latter is complete, since as a reflective subcategory, complete uniform locales are closed under products.
 
 Certainly the unit $\upsilon_X\colon X \to \Upsilon X$ is strongly dense since it is an epimorphism of locales. Also, we already know that $\gamma_{\Upsilon X}\colon \Upsilon X \to \Cvar \Upsilon X$ is strongly dense.
 Therefore, the composite $\gamma_{\Upsilon X}\upsilon_X\colon X \to \Cvar \Upsilon X$ is strongly dense (and similarly for $Y$). It then follows that the product map $X \times Y \to \Cvar \Upsilon X \times \Cvar \Upsilon Y$ is strongly dense.
 This factors through the unit map $\upsilon_{X \times Y}\colon X \times Y \to \Upsilon(X \times Y)$ to give the map in question. Now as required, this map is strongly dense since it is the second factor of a strongly dense map.
\end{proof}

The functoriality of $\Cvar$ means that uniform maps between pre-uniform locales lift to (uniform) maps between their completions.
However, these are not the only maps that lift to (not-necessarily-uniform) maps between the completions.
\begin{lemma}\label{prop:lift_to_completion}
 A general locale map $f\colon X \to Y$ between pre-uniform locales lifts to give a (unique) locale map $\widetilde{f}\colon \Cvar\Upsilon X \to \Cvar\Upsilon Y$ such that $\widetilde{f} \gamma_{\Upsilon X}\upsilon_X = \gamma_{\Upsilon Y} \upsilon_Y f$
 if and only if $(\gamma_{\Upsilon X})_* (\upsilon_X)_* f^*$ sends uniform covers to covers.
\end{lemma}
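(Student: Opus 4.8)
The plan is to obtain $\widetilde f$ from the universal property of the completion. Since $\Cvar\Upsilon Y$ is the classifying locale of regular Cauchy filters on $\Upsilon Y$ (equivalently $\Cvar\Upsilon Y \cong \Upsilon\Cauchy Y$), a locale map $\Cvar\Upsilon X \to \Cvar\Upsilon Y$ is the same datum as a $\Cvar\Upsilon X$-indexed regular Cauchy filter on $\Upsilon Y$, i.e.\ an assignment $\Theta\colon \O\Upsilon Y \to \O\Cvar\Upsilon X$ respecting the five presenting relations (i)--(v). Chasing the required square on the generators $[a \in F] = (\gamma_{\Upsilon Y})_*(a)$, and using the identity $\gamma_{\Upsilon X}\upsilon_X = \upsilon_{\Cauchy X}\digamma_X$, forces $\Theta$ to be built from the pushforward $\Phi = (\gamma_{\Upsilon X})_*(\upsilon_X)_* f^* = (\gamma_{\Upsilon X}\upsilon_X)_* f^*$ of the generic Cauchy filter on $X$ along $f$ (regularised, if necessary, by $a \mapsto \bigvee_{b \vartriangleleft a}\Phi(b)$). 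The entire statement then reduces to determining which of the relations (i)--(v) this assignment respects.

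First I would dispose of the relations that hold for free. Relations (i) and (ii) are immediate because $\Phi$ is a composite of a frame map with two right adjoints and so preserves $1$ and binary meets, while relation (v) (regularity) follows from interpolativity of $\vartriangleleft$ together with the sublattice property of \cref{lem:uniformly_below_properties}. The positivity relation (iii) looks dangerous but is in fact automatic, and this is the one point I would treat carefully: strong density of $\gamma_{\Upsilon X}\upsilon_X$ gives $\Phi(a) \le {!}\,\exists_X f^*(a)$, and the key inequality $\exists_X f^*(a) \le \exists_{\Upsilon Y}(a)$ is nothing but the unit of the adjunction $\exists \dashv {!}^*$ on $\Upsilon Y$ transported along $f^*$ via ${!}^*_X = f^*\upsilon_Y^*\,{!}^*_{\Upsilon Y}$. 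It is essential here that the domain of $\Theta$ is $\O\Upsilon Y$, not $\O Y$.

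This leaves relation (iv), the Cauchy (covering) condition, as the sole substantive requirement, and it states exactly that $\bigvee_{u \in U}\Phi(u) = 1$ in $\O\Cvar\Upsilon X$ for every uniform cover $U$ of $Y$ --- that is, that $(\gamma_{\Upsilon X})_*(\upsilon_X)_* f^*$ sends uniform covers to covers. This is precisely the stated hypothesis, so under it $\Theta$ is a genuine indexed regular Cauchy filter and $\widetilde f$ exists; compatibility with the square then drops out of the naturality identities $\upsilon_{\Cauchy X}\digamma_X = \gamma_{\Upsilon X}\upsilon_X$ and $\upsilon_{\Cauchy Y}\digamma_Y = \gamma_{\Upsilon Y}\upsilon_Y$, and uniqueness is immediate because $\gamma_{\Upsilon X}\upsilon_X$ is strongly dense, hence epic by \cref{cor:strongly_dense_epic}. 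For the converse I would run the same computation backwards: the covering relation (iv) already holds in $\O\Cvar\Upsilon Y$, is preserved by the frame homomorphism $\widetilde f^*$, and pushing it through the commuting square recovers $\bigvee_u \Phi(u) = 1$, so existence of the lift forces the cover condition.

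The main obstacle I expect is pinning down relation (iv) precisely: one must match the covering relation of the presentation --- which lives over the reflection $\Upsilon Y$ and is phrased via stars and interpolated covers --- against the literal condition that $\Phi$ sends uniform covers to covers, and check that whatever regularisation is used to secure relation (v) neither disturbs relation (iv) nor the compatibility square. Establishing the necessity direction cleanly, where the cover condition must be extracted from the bare existence of $\widetilde f$, is the step I would verify most carefully.
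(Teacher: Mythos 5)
Your construction of the lift is essentially the paper's: both build $\widetilde{f}$ from the indexed Cauchy filter $\Phi = (\gamma_{\Upsilon X})_*(\upsilon_X)_* f^*$, observe that relations (i)--(iii) come for free (meet preservation by right adjoints plus strong density of $\gamma_{\Upsilon X}\upsilon_X$) while relation (iv) is literally the stated hypothesis, and get uniqueness from epicness of the strongly dense map $\gamma_{\Upsilon X}\upsilon_X$ via \cref{cor:strongly_dense_epic}. Where you diverge is in handling regularity: you aim the filter directly at the presentation of $\Cvar\Upsilon Y$, so it must be indexed by $\O\Upsilon Y$ and satisfy relation (v), which forces the ad hoc regularisation $a\mapsto\bigvee_{b\vartriangleleft a}\Phi(b)$ and creates exactly the $\O Y$--versus--$\O\Upsilon Y$ matching problems you flag at the end. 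The paper instead first produces a map into $\Cauchy Y$ --- the classifier of \emph{all} Cauchy filters on $Y$, indexed by $\O Y$ with no relation (v) --- and only then passes to $\Cvar\Upsilon Y$ via $\Upsilon\Cauchy Y\cong\Cvar\Upsilon Y$ (\cref{prop:uniform_reflection_of_Cauchy_X}); this purchase is precisely the bookkeeping you were worried about, since the mismatch between covers of $Y$ and covers of $\Upsilon Y$ and the compatibility of the regularisation with the square are absorbed into an already-proved isomorphism. For the necessity direction your outline is right but stops where the work starts: ``pushing the cover relation through the square'' concretely means the inequality $(\gamma_{\Upsilon X})_*(\upsilon_X)_* f^*\upsilon_Y^*\gamma_{\Upsilon Y}^*\ge\widetilde{f}^*$ (from the square together with the unit of $(\gamma_{\Upsilon X}\upsilon_X)^*\dashv(\gamma_{\Upsilon X}\upsilon_X)_*$), combined with the facts that $(\gamma_{\Upsilon Y})_*$ and $(\upsilon_Y)_*$ send uniform covers to uniform covers and that covers of the form ${\downarrow}\upsilon_Y^*(\upsilon_Y)_*[U]$ form a base for the uniformity on $Y$ (established in the proof of \cref{prop:uniform_reflection}). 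You correctly single this out as the delicate step, but it is the one piece of the argument your proposal does not actually supply.
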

\begin{proof}
 The reverse direction of the proof proceeds as in the proof of \cref{prop:Cauchy_X_is_functorial}.
 The composite map $(\gamma_{\Upsilon X})_* (\upsilon_X)_* f^*$ is an $\Cvar X$-indexed Cauchy filter on $Y$
 and thus lifts to give a frame homomorphism from $\O \Cauchy Y$ to $\O \Cvar X$. Then the universal property of the uniform reflection of $\Cauchy Y$ (together with the isomorphism $\Upsilon \Cauchy X \cong \Cvar \Upsilon X$) gives the desired lift $\widetilde{f}\colon \Cvar \Upsilon X \to \Cvar \Upsilon Y$.
 
 For the forward direction consider the following calculation.
 \begin{displaymath}
  (\gamma_{\Upsilon X})_* (\upsilon_X)_* f^* \upsilon_Y^* \gamma_{\Upsilon Y}^* = (\gamma_{\Upsilon X})_* (\upsilon_X)_* \upsilon_X^* \gamma_{\Upsilon X}^* \widetilde{f}^* \ge \widetilde{f}^*
 \end{displaymath}
 Certainly, the frame homomorphism $\widetilde{f}^*$ sends all covers to covers and hence so does $(\gamma_{\Upsilon X})_* (\upsilon_X)_* f^* \upsilon_Y^* \gamma_{\Upsilon Y}^*$.
 But we also know that $(\gamma_{\Upsilon Y})_*$ and $(\upsilon_Y)_*$ preserve uniform covers and hence $(\gamma_{\Upsilon X})_* (\upsilon_X)_* f^* \upsilon_Y^* \gamma_{\Upsilon Y}^* (\gamma_{\Upsilon Y})_* (\upsilon_Y)_* = (\gamma_{\Upsilon X})_* (\upsilon_X)_* f^* \upsilon_Y^* (\upsilon_Y)_*$ sends uniform covers to covers.
 Now in the proof of \namecref{prop:uniform_reflection}~\hyperref[proof:prop:uniform_reflection]{\ref*{prop:uniform_reflection}}
 we showed uniform covers of the form ${\downarrow} \upsilon_Y^* (\upsilon_Y)_*[U]$
 form a base for the uniformity on $Y$ and hence $(\gamma_{\Upsilon X})_* (\upsilon_X)_* f^*$ also sends uniform covers to covers, as required.
\end{proof}
This can also be expressed in terms of entourages.
\begin{corollary}\label{prop:lift_to_completion_via_entourages}
 A locale map $f\colon X \to Y$ between pre-uniform locales lifts to a locale map $\widetilde{f}\colon \Cvar\Upsilon X \to \Cvar\Upsilon Y$ as above if and only if $(\gamma_{\Upsilon X}\upsilon_X \times \gamma_{\Upsilon X}\upsilon_X)_* (f \times f)^*$ sends entourages on $Y$ to reflexive relations on $\Cvar\Upsilon X$.
\end{corollary}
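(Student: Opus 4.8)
The plan is to deduce this from the cover-based criterion of \cref{prop:lift_to_completion} using the entourage--cover dictionary of \cref{thm:entourage_vs_cover_preuniformity}. Write $g = \gamma_{\Upsilon X}\upsilon_X\colon X \to \Cvar\Upsilon X$, so that $g$ is strongly dense (being a composite of strongly dense maps) and $g_* = (\gamma_{\Upsilon X})_*(\upsilon_X)_*$. By \cref{prop:lift_to_completion} the lift $\widetilde f$ exists precisely when $g_* f^*$ sends uniform covers to covers. Since the covering downsets $\widehat E$, for $E$ an entourage on $Y$, form a base for the uniformity of $Y$ by \cref{thm:entourage_vs_cover_preuniformity}, this is equivalent to the family of conditions
\[ \bigvee\{\, g_* f^*(u) \mid u > 0,\ u \oplus u \le E \,\} = 1 \qquad \text{for every entourage } E \text{ on } Y, \]
and it remains to show that this family is equivalent to the reflexivity of every relation $(g \times g)_*(f \times f)^*(E)$.

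First I would rewrite the reflexivity condition as a join. A relation $R$ on $\Cvar\Upsilon X$ is reflexive iff $\Delta^*R = 1$, and applying \cref{lem:adjoint_of_product_map} together with the facts that $\Delta^*$ preserves joins and $g_*$ preserves binary meets gives
\[ \Delta^*(g \times g)_*(f \times f)^*(E) = \bigvee\{\, g_*(c) \mid c \oplus c \le (f \times f)^*(E) \,\}. \]
So the entourage condition asserts exactly that this join is $1$ for every $E$. The forward implication is then immediate: each $u$ with $u \oplus u \le E$ yields $f^*(u) \oplus f^*(u) = (f \times f)^*(u \oplus u) \le (f \times f)^*(E)$, so the generator $g_* f^*(u)$ of the cover join occurs among the $g_*(c)$, whence reflexivity follows from the cover condition.

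The reverse implication is where I expect the real work to lie. Here I would first pass to the positive part of the reflexivity join: since $\Cvar\Upsilon X$ is overt the join may be restricted to terms with $g_*(c) > 0$, and for such a term the counit inequality $g^* g_* \le \id$ together with strong density of $g$ gives $c \ge g^* g_*(c) > 0$, so that $c$ is itself positive. Given a target entourage $E'$, I would then invoke the positive reflexivity condition for an entourage $E$ with $E \circ E \circ E \le E'$ and re-cover each positive $c$ with $c \oplus c \le (f \times f)^*(E)$ exactly as in the second half of the proof of \cref{thm:entourage_vs_cover_preuniformity}: the open $u = \st((\Sloc f)_!(c), \widehat E)$ on $Y$ is positive, satisfies $f^*(u) \ge c$, and satisfies $u \oplus u \le E'$ by the internal-logic triple-composition argument (the one genuinely delicate step, though it is already carried out there). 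Monotonicity of $g_*$ then yields $g_*(c) \le g_* f^*(u)$ with $u \in \widehat{E'}$, so the cover join for $E'$ dominates the positive reflexivity join for $E$ and equals $1$. As $E'$ is arbitrary, the cover condition holds and the lift exists.
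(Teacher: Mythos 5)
Your proposal is correct and follows essentially the same route as the paper: it reduces to the cover criterion of \cref{prop:lift_to_completion}, proves that the cover condition implies reflexivity via the $\Delta^*$ computation using \cref{lem:adjoint_of_product_map}, and proves the converse by the star-of-image argument borrowed from the functoriality part of \cref{thm:entourage_vs_cover_preuniformity}. The only difference is that you make explicit the positivity bookkeeping (restricting the reflexivity join to positive terms and using strong density of $\gamma_{\Upsilon X}\upsilon_X$ to conclude $c>0$), which the paper leaves implicit in its appeal to the earlier proof.
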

\begin{proof}
 We must simply show that $(\gamma\upsilon \times \gamma\upsilon)_* (f \times f)^*$ sends entourages to reflexive relations if and only if $\gamma_*\upsilon_* f^*$ sends uniform covers to covers.
 
 The proof of the forward direction proceeds almost exactly as in the proof that the map $(X, \E) \mapsto (X, \widehat{\E})$ is functorial in \cref{thm:entourage_vs_cover_preuniformity} (and using $u \le \gamma_*\upsilon_*f^*(v) \iff \upsilon^*\gamma^*(u) \le f^*(v)$ as appropriate).
 
 Conversely, suppose $\gamma_*\upsilon_*$ sends uniform covers to covers.
 Using the same notation as in \cref{thm:entourage_vs_cover_preuniformity} (and applying \cref{lem:adjoint_of_product_map}) we have
 \begin{align*}
  \Delta^* (\gamma\upsilon \times \gamma\upsilon)_* (f \times f)^*(\overline{V}) &= \bigvee\{\gamma_*\upsilon_*(a) \wedge \gamma_*\upsilon_*(b) \mid a \oplus b \le (f \times f)^*(\overline{V})\} \\
          &= \bigvee\{\gamma_*\upsilon_*(c) \mid c \oplus c \le (f \times f)^*(\overline{V})\} \\
          &= \bigvee\{\gamma_*\upsilon_*(c) \mid c \oplus c \le \bigvee\{f^*(v) \oplus f^*(v) \mid v \in V\} \} \\
          &\ge \bigvee\{\gamma_*\upsilon_*(c) \mid c \le f^*(v),\, v \in V \} \\
          &= \bigvee\{\gamma_*\upsilon_*f^*(v) \mid v \in V \} \\
          &= \bigvee ({\downarrow}\gamma_*\upsilon_*f^*[V]) \\
          &= 1.
 \end{align*}
 Thus, $(\gamma\upsilon \times \gamma\upsilon)_* (f \times f)^*$ indeed sends entourages to reflexive relations, as required.
\end{proof}

It is sometimes convenient to be able to describe the completion of a (pre-)uniform locale $X$ in terms of basic opens and basic uniformities on $X$ instead of all elements of $\O X$.
This was done in the metric case by Henry in \cite{Henry2016}. We conclude this section with a discussion of how to do this in our setting.

Let $X$ be a locale and let $\mathcal{B}$ be a collection of covers of $X$ by positive opens such that $\{{\downarrow} C \mid C \in \mathcal{B}\}$ is a base for a covering uniformity $\U$ on $X$. Set $B = \bigcup \mathcal{B}$.

Note that every element of $\O\Upsilon X$ is a join of elements of $B$. For $a \in \O\Upsilon X$ we have $a \le \bigvee_{b \vartriangleleft a} b$. Now $b \vartriangleleft a$ means $b \le \st(b, C) = \bigvee \{c \in C \mid c \between b\} \le a$ for some $C \in \mathcal{B}$ and so $a \le \bigvee\{c \mid C \in \mathcal{B},\, c \in C,\, b \in \O X,\, \st(b, C) \le a,\, c \between b \} \le a$.

By replacing each element $c$ of each $C \in \mathcal{B}$ by $\bigvee_{c' \vartriangleleft c} c'$ we may assume without loss of generality that $B \subseteq \O\Upsilon X$ and hence that $B$ is a base for $\O\Upsilon X$.

\begin{proposition}\label{prop:completion_from_a_base}
 Given $X$, $\mathcal{B}$ and $B$ as defined above, the completion $\Cvar \Upsilon X$ has an alternative presentation with a generator $[b \in F_B]$ for each $b \in B$ and the following relations:
 \begin{enumerate}[a)]
  \item $[a \in F_B] \le [b \in F_B]$ for $a \le b$,
  \item $[a \in F_B] \wedge [b \in F_B] \le \bigvee_{c \in B \cap {\downarrow} a \cap {\downarrow} b} [c \in F_B]$,
  \item $\bigvee_{c \in C} [c \in F_B] = 1$ for each $C \in \mathcal{B}$,
  \item $[a \in F_B] \le \bigvee_{b \in B,\, b \vartriangleleft a} [b \in F_B]$.
 \end{enumerate}
\end{proposition}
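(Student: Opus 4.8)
The plan is to identify the frame $L$ presented by the generators $[b\in F_B]$ (for $b\in B$) and relations (a)--(d) with $\O\Cvar\Upsilon X$, which by construction carries the presentation with a generator $[a\in F]$ for \emph{every} $a\in\O\Upsilon X$ and relations (i)--(v). I would produce mutually inverse frame homomorphisms $\Phi\colon L\to\O\Cvar\Upsilon X$ and $\Psi\colon\O\Cvar\Upsilon X\to L$ by appealing to the two universal properties, throughout using that $B\subseteq\O\Upsilon X$ is a base and that $\vartriangleleft$ and $\U$ refer to the uniform structure on $\Upsilon X$ with basic covers ${\downarrow}C$ for $C\in\mathcal B$.

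To build $\Phi$ it suffices to check that the elements $[b\in F]$, $b\in B$, satisfy (a)--(d) inside $\O\Cvar\Upsilon X$. Relations (a) and (c) are immediate from (i),(ii) and from (iv) respectively (for (c), noting $\bigvee_{u\in{\downarrow}C}[u\in F]=\bigvee_{c\in C}[c\in F]$). The substance is a single lemma: \emph{if $c\vartriangleleft c'$ in $\O\Upsilon X$ then $[c\in F]\le\bigvee\{[d\in F]\mid d\in B,\ d\le c'\}$.} To prove it I would pick $C\in\mathcal B$ with $\st(c,{\downarrow}C)\le c'$, use (iv) to write $[c\in F]=\bigvee_{e\in C}[c\wedge e\in F]$, and bound each summand by $[e\in F]\wedge\bigvee\{1\mid c\between e\}$ using (a) and (iii); when $c\between e$ holds one has $e\in C\subseteq B$ and $e\le\st(c,{\downarrow}C)\le c'$, so every summand is dominated as claimed. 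Relation (b) then follows by applying this lemma (with $c'=a\wedge b$) to each term of $[a\wedge b\in F]\le\bigvee_{c'\vartriangleleft a\wedge b}[c'\in F]$ from (v), and (d) follows similarly after one interpolation step, replacing $c\vartriangleleft a$ by $c\vartriangleleft c''\vartriangleleft a$ so that the resulting $d\le c''\vartriangleleft a$ satisfies $d\vartriangleleft a$ (\cref{lem:uniformly_below_properties}).

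For $\Psi$ I would set $[a\in F]\mapsto\bigvee\{[b\in F_B]\mid b\in B,\ b\le a\}$ and verify that this assignment respects (i)--(v), so that it extends to a frame homomorphism by the universal property of the presentation of $\Cvar\Upsilon X$. Relations (i) and (iv) come from (c), (ii) from (a) and (b), and (v) from (d) together with ${\le}\circ{\vartriangleleft}\circ{\le}\subseteq{\vartriangleleft}$; crucially (iii) holds automatically because every member of $B$ is positive, so the existence of some $b\in B$ with $b\le a$ already forces $a>0$. Finally $\Psi\Phi=\id_L$ is immediate from (a), since the defining join collapses to its top term $[b\in F_B]$, while $\Phi\Psi=\id$ amounts to the inequality $[a\in F]\le\bigvee\{[b\in F]\mid b\in B,\ b\le a\}$, which is the displayed lemma (with $c'=a$) applied to each $c\vartriangleleft a$ occurring in (v).

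The main obstacle is exactly the displayed lemma. Because $[-\in F]$ preserves finite meets but \emph{not} joins, one cannot simply expand $c'$ as the join of the basic opens below it; instead the argument must route through the star of a basic cover and use the positivity relation (iii) to discard the non-positive meets $c\wedge e$. This is also the only point at which the hypothesis that the members of $\mathcal B$ are covers by \emph{positive} opens is genuinely used.
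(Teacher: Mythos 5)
Your proposal is correct and follows essentially the same route as the paper: the heart of both arguments is the identity $[a\in F]=\bigvee\{[b\in F]\mid b\in B,\ b\le a\}$, proved by combining regularity (v) with the Cauchy relation (iv) and then using the positivity relation (iii) to discard the summands $[a'\wedge c\in F]$ with $a'$ and $c$ not overlapping, so that the surviving $c$ satisfy $c\le\st(a',C)\le a$; the translation $[a\in F]\mapsto\bigvee_{b\in B\cap{\downarrow}a}[b\in F_B]$ and the verification that (i)--(v) and (a)--(d) are interderivable likewise match the paper's proof. Your write-up is only slightly more explicit about the two frame homomorphisms and their composites (and note that in the key lemma the monotonicity $[c\wedge e\in F]\le[e\in F]$ comes from relation (ii) of the original presentation rather than from (a), and that the positivity of the members of $B$ is what makes $\Psi$ respect (iii), as you say, rather than being consumed inside the lemma itself).
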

\begin{proof}
 We first note that every generator $[a \in F]$ in the original presentation for $\Cvar X$ satisfies $[a \in F] = \bigvee_{b \in B \cap {\downarrow} a} [b \in F]$.
 To see this, recall that $[a \in F] \le \bigvee_{a' \vartriangleleft a} [a' \in F]$ by regularity and that $a' \vartriangleleft a$ means $\st(a', C) \le a$ for some $C \in \mathcal{B}$.
 Then by Cauchiness we have $[a' \in F] \le [a' \in F] \wedge \bigvee_{c \in C} [c \in F] = \bigvee_{c \in C} [a' \wedge c \in F]$. Putting these together, we find \[[a \in F] \le \bigvee \{ [a' \wedge c \in F] \mid C \in \mathcal{B},\, c \in C,\, a' \in \O X,\, \st(a', C) \le a\}.\]
 Now by properness (that is, condition (iii) for the presentation) we can restrict the join to the elements $[a' \wedge c \in F]$ such that $a' \between c$.
 But $a' \between c$ and $c \in C$ give $c \le \st(a', C) \le a$ and hence
 $[a \in F] \le \bigvee \{ [c \in F] \mid C \in \mathcal{B},\, c \in C,\, c \le a\} \le \bigvee_{c \in B \cap {\downarrow} a} [c \in F]$, as claimed.
 
 We can use this to show the generators $[b \in F]$ for $b \in B$ indeed satisfy the above relations required for $[b \in F_B]$.
 That (a) and (b) hold is immediate from (ii) and the claim above, while relation (c) follows from (iv).
 Finally, relation (d) follows from the regularity condition (v) and the above claim.
 
 It remains to define $[a \in F]$ in terms of $[b \in F_B]$ show that relations (i)--(v) for the original presentation follow from (a)--(d).
 The claim suggests defining $[a \in F]$ to be $\bigvee_{b \in B \cap {\downarrow} a} [b \in F]$.
 Condition (i) says that $\bigvee_{b \in B} [b \in F_B] = 1$, which holds by (c) and the fact that, as a base, $\mathcal{B}$ is inhabited.
 The ``$\le$'' direction of (ii) follows from (a), while for the ``$\ge$'' direction follows from (b). Condition (iii) comes from the fact that every $b \in B$ is positive.
 Finally, (iv) and (v) follow from (c) and (d) respectively.
\end{proof}

\section{Examples and applications}

In this section we discuss the relationship between metric and uniform locales and some applications of uniform locales to localic algebra.

\subsection{Metric locales}

Classically, perhaps the most important example of uniform spaces is given by metric spaces.
As one might expect, in the constructive setting metric locales also have uniform structures. Our definition of metric locales follows that of Henry in \cite{Henry2016}.

Metric locales take values in the nonnegative extended upper reals $\overleftarrow{\R_{\ge 0}^\infty}$ --- the classifying locale of extended upper Dedekind cuts on the set of positive rationals $\Q_+$. Explicitly, a presentation of $\O \overleftarrow{\R_{\ge 0}^\infty}$ has a generator $[0, q)$ for each $q \in \Q_+$ subject to the relations $[0, q) = \bigvee_{p < q} [0, p)$.
\begin{definition}
 A \emph{pre-metric locale} is an overt locale $X$ equipped with a locale map $d\colon X \times X \to \overleftarrow{\R_{\ge 0}^\infty}$
 such that in the internal logic we have
 \begin{itemize}
  \item \makebox[140pt][l]{$d(x,x) = 0$}                 $(x \colon X)$,
  \item \makebox[140pt][l]{$d(x,y) = d(y,z)$}            $(x,y \colon X)$,
  \item \makebox[140pt][l]{$d(x,z) \le d(x,y) + d(y,z)$} $(x,y,z \colon X)$,
 \end{itemize}
 where $\le$ is interpreted as the \emph{reverse} of the order enrichment.
\end{definition}

If $(X, d)$ is a metric locale, we now attempt to define a uniformity $\E_d$ on $X$ with basic entourages of the form $E_q = d^*([0, q))$ for each $q \in \Q_+$.
\begin{lemma}
 This indeed gives a pre-uniform locale $(X, \E_d)$.
\end{lemma}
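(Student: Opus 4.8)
The plan is to check directly that the family $\{E_q\}_{q \in \Q_+}$ is a filter base whose generated upset $\E_d$ satisfies the three entourage axioms. Since $d^*$ is a frame homomorphism and $[0,q) \wedge [0,q') = [0,\min(q,q'))$ in $\O\overleftarrow{\R_{\ge 0}^\infty}$, we get $E_q \wedge E_{q'} = E_{\min(q,q')}$; thus the base is inhabited and closed under binary meets, so it is downward directed and $\E_d$ is a genuine filter. Because reflexivity and the existence of a symmetric entourage and of a square root all persist when one enlarges an entourage, it suffices to verify the three conditions on the basic $E_q$ (with the symmetry witness for a general $E$ produced by the swap trick described below).

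For reflexivity, the condition $\vdash_{x\colon X} (x,x) \in E_q$ amounts to $\Delta^*(E_q) = 1$ for the diagonal $\Delta\colon X \to X \times X$. As $d \circ \Delta$ is the constant map sending $x$ to $d(x,x) = 0$ and $0 \in [0,q)$ for every $q \in \Q_+$, we obtain $\Delta^*(E_q) = (d\Delta)^*([0,q)) = 1$. Symmetry is likewise immediate: writing $\sigma$ for the coordinate swap, the symmetry axiom $d(x,y) = d(y,x)$ says $d\sigma = d$, so $\sigma^*(E_q) = E_q$ and $E_q$ serves as its own $E_q^{\mathrm{o}}$; for an arbitrary $E \supseteq E_q$ one takes $E^{\mathrm{o}} = \sigma^*(E) \supseteq E_q$, noting that $(y,x) \in \sigma^*(E) \dashv\vdash_{x,y\colon X} (x,y) \in E$ holds by the definition of $\sigma^*$ in the internal logic.

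The substantive axiom is the existence of a square root, for which I would take $F = E_{q/2}$ and prove $E_{q/2} \circ E_{q/2} \le E_q$. Working in the internal logic, $(x,z) \in E_{q/2} \circ E_{q/2}$ unfolds to $\exists y\colon X.\ d(x,y) < q/2 \land d(y,z) < q/2$, where $d(x,y) < q/2$ abbreviates $(x,y) \in E_{q/2} = d^*([0,q/2))$, and the goal is to deduce $d(x,z) < q$, i.e.\ $(x,z) \in E_q$. The argument combines the triangle inequality $d(x,z) \le d(x,y) + d(y,z)$ (read with the reversed order enrichment) with the arithmetic fact $r < q/2 \land s < q/2 \vdash r + s < q$, which follows from the geometric description of addition on the upper reals via pairs $[0,q_1)$, $[0,q_2)$ with $q_1 + q_2 = q$ (take $q_1 = q_2 = q/2$).

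The main obstacle is precisely this last arithmetic step: making rigorous, in the presence of the reversed order on $\overleftarrow{\R_{\ge 0}^\infty}$, that summing two strict upper bounds behaves as expected, so that $d(x,y) < q/2$ and $d(y,z) < q/2$ genuinely force $d(x,z) < q$. This is the only place where the specific structure of the extended upper reals and their addition enters, and I would isolate the required property of the opens $[0,q)$ as a small observation so that the transitivity estimate reduces to monotonicity of relational composition; the remaining verifications are routine manipulations in the internal logic.
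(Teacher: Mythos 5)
Your proposal is correct and follows essentially the same route as the paper: verify the filter-base property from monotonicity of $q \mapsto E_q$, get reflexivity from $d(x,x)=0<q$, symmetry from $d\sigma = d$, and transitivity from $E_{q/2}\circ E_{q/2}\le E_q$ via the triangle inequality. The arithmetic point you isolate about adding strict upper bounds in $\overleftarrow{\R_{\ge 0}^\infty}$ is handled correctly by your appeal to the presentation of addition, and the paper simply asserts the same step without further comment.
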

\begin{proof}
 Since $E_p \le E_q$ for $p \le q$, it is clear that the basic entourages $E_q$ form a filter base.
 For the reflexivity condition, we know that in the internal logic we have $d(x,x) = 0 < q$ and hence $(x,x) \in E_q$, as required.
 The symmetry condition follows from the fact that $E_q$ is symmetric by the symmetry condition on $d$.
 Finally, we will show $E_{q/2} \circ E_{q/2} \le E_q$.
 In the internal logic, $(x,z) \in E_{q/2} \circ E_{q/2}$ means $\exists y\colon X.\ (x,y) \in E_{q/2} \land (y,z) \in E_{q/2}$.
 That is, $d(x,y) < q/2$ and $d(y,z) < q/2$. By the triangle inequality it follows that $d(x,z) \le d(x,y) + d(x,y) < q/2 + q/2 = q$.
 Thus, $(x,z) \in E_q$ and we have shown the desired inclusion.
\end{proof}

This construction also interacts well with morphisms of pre-metric spaces.

\begin{definition}
 A map $f\colon X \to Y$ between pre-metric locales is \emph{nonexpansive} if we have $\vdash_{x,y\colon X} d^Y(f(x),f(y)) \le d^X(x,y)$ in the internal logic --- that is, if $E_q^X \le (f \times f)^*(E_q^Y)$ for all $q \in \Q_+$.
\end{definition}

From this definition it is clear that a nonexpansive map is uniform with respect to the induced pre-uniformities. Thus, the association of the above metric uniformity to a metric locale to a uniform locale gives a forgetful functor from the category $\PreMetLoc$ of pre-metric locales and nonexpansive maps to $\PreUnifLoc$, which commutes with the obvious forgetful functors into $\OLoc$.

There is a relation $\vartriangleleft_q$ on pre-metric locales, the definition of which is precisely equivalent to $a \vartriangleleft_q b \iff (a \oplus a) \circ E_q \le b \oplus b$.
Then the definition of $\vartriangleleft$ by $a \vartriangleleft b \iff \exists q \in \Q_+.\ a \vartriangleleft_q b$ coincides with the usual uniformly below relation for the metric uniformity.

\begin{definition}
 A \emph{metric locale} is a pre-metric locale $X$ for which $a = \bigvee_{b \vartriangleleft a} b$ for all $a \in \O X$.
 By the above discussion, this is clearly the equivalent to the metric uniformity being admissible.
\end{definition}
Thus, the forgetful functor $\PreMetLoc \to \PreUnifLoc$ restricts to a functor $U$ from the category $\MetLoc$ of metric locales to $\UnifLoc$.

Finally, we note that the completion of a metric locale described in \cite{Henry2016} agrees with the completion of the underlying uniform locale in the sense that the following diagram commutes.
\begin{center}
\begin{tikzpicture}[node distance=2.5cm, auto]
  \node (TL) {$\MetLoc$};
  \node (TR) [right of=TL, xshift=0.6cm] {$\CMetLoc$};
  \node (BL) [below of=TL] {$\UnifLoc$};
  \node (BR) [below of=TR] {$\CUnifLoc$};
  \draw[->] (TL) to [swap] node {$U$} (BL);
  \draw[->] (TR) to node {$U$} (BR);
  \draw[->] (TL) to node {$\Cvar$} (TR);
  \draw[->] (BL) to node [swap] {$\Cvar$} (BR);
\end{tikzpicture}
\end{center}

\subsection{Localic groups}

Another important class of examples of uniform spaces is given by topological groups.
In our setting we will see that every overt localic group has a natural uniform structure.

By \emph{localic group} we simply mean an internal group in $\Loc$.
More explicitly, this means we have a locale $G$ and maps $\epsilon\colon 1 \to G$, $\iota\colon G \to G$ and $\mu\colon G \times G \to G$ satisfying the group axioms.
\begin{remark}
Localic groups are the pointfree analogue of topological groups.
However, even though classically every $T_0$ topological group is Tychonoff and hence sober, not every topological group is the spectrum of a localic group,
since a product of two locales can diverge from that of the corresponding spaces.
The difference between the two theories can be understood in terms of uniform structures: every localic group satisfies a certain completeness condition (as shown in \cite{banaschewski1999completeness} in the classical setting).
\end{remark}

\begin{definition}
 Let $G$ be an overt localic group. The \emph{left uniformity} on $G$ is given by the following set of basic entourages:
 for each open neighbourhood of the identity $u$ we define $L_u$ to be $\{(x,y)\colon G \times G \mid x^{-1}y \in u \}$ in the internal logic.
 
 The \emph{right uniformity} is similar with $R_u = \{(x,y)\colon G \times G \mid xy^{-1} \in u \}$. The \emph{two-sided uniformity} is the join of these in the lattice of uniformities.
 It has a base consisting of the entourages $T_u = \{(x,y)\colon G \times G \mid x^{-1}y \in u \,\land\, xy^{-1} \in u \}$.
\end{definition}
\begin{lemma}
 These all define admissible uniformities on $G$.
\end{lemma}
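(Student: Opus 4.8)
The plan is to prove the claim for the left uniformity in full and then deduce the other two cases formally. Since $\iota\colon G\to G$ is an isomorphism of locales which in the internal logic interchanges $x^{-1}y$ and $xy^{-1}$, it carries the left uniformity onto the right one; thus the right uniformity is the image of an admissible uniformity under an automorphism and is again admissible. The two-sided uniformity is by definition the join of the two, hence finer than the left uniformity, and any uniformity finer than an admissible one is again admissible (passing to a finer uniformity only enlarges $\vartriangleleft$, and $a\vartriangleleft b$ always forces $a\le b$, so the join $\bigvee_{a\vartriangleleft b}a$ can only grow while staying below $b$). Throughout I would work in the internal logic of $\O\colon\OLoc\op\to\Frm$, using that overtness of $G$ makes the projections open and hence, writing $\mu=\pi_2\circ\langle\pi_1,\mu\rangle$ with $\langle\pi_1,\mu\rangle\colon(x,y)\mapsto(x,xy)$ an isomorphism, that $\mu$ is an open surjection (so $\mu_!$ exists and $\mu_!\mu^*=\id$).

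First I would verify that the $L_u$ form a base for a uniformity. Reflexivity $\vdash_{x}(x,x)\in L_u$ is just $e\in u$, which holds since $u$ is a neighbourhood of the identity; symmetry holds with $L_u^{\mathrm o}=L_{\iota^* u}$, because $(y,x)\in L_{\iota^* u}\iff(x^{-1}y)^{-1}\in u\iff(x,y)\in L_u$ and $\iota^* u$ is again a neighbourhood of $e$ as $\iota$ fixes $e$; and $L_u\wedge L_{u'}=L_{u\wedge u'}$ gives the filter-base condition. For the halving axiom I would invoke continuity of multiplication at $(e,e)$: since $\mu(e,e)=e\in u$ there is a neighbourhood $v$ of $e$ with $v\cdot v\le u$, and then $(x,y)\in L_v$, $(y,z)\in L_v$ force $x^{-1}z=(x^{-1}y)(y^{-1}z)\in v\cdot v\le u$, so $L_v\circ L_v\le L_u$.

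For admissibility I must show $b=\bigvee_{a\vartriangleleft b}a$ for every $b\in\O G$. Unwinding \cref{lem:uniformly_below_internal_logic}, $a\vartriangleleft b$ holds for the left uniformity exactly when some neighbourhood $u$ of $e$ satisfies $a\cdot u\le b$, equivalently $a\oplus u\le\mu^* b$ (with $a\cdot u=\mu_!(a\oplus u)$). As $e\in u$ gives $a\le a\cdot u\le b$, only the reverse inequality is at issue. For each neighbourhood $u$ set $b^{\ominus u}=\bigvee\{a:a\oplus u\le\mu^* b\}$; then $b^{\ominus u}\cdot u\le b$ (products distribute over joins since $\mu_!$ does), so $b^{\ominus u}\vartriangleleft b$, and $b^{\ominus u}\le b$ by restricting along the identity slice $\langle\id,\epsilon!\rangle$, which satisfies $\mu\circ\langle\id,\epsilon!\rangle=\id$ and hence $\langle\id,\epsilon!\rangle^*\mu^* b=b$. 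It remains to prove $b\le\bigvee_u b^{\ominus u}$, i.e.\ that the neighbourhood-tubes contained in $\mu^* b$ already cover the identity slice $G\times\{e\}$; this is the pointfree form of continuity of multiplication along that slice, and is where the rectangle base of $\O(G\times G)$ together with $\mu_!\mu^*=\id$ must be brought to bear.

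The main obstacle is precisely this last step, and its delicacy is constructive rather than classical. Decomposing $\mu^* b=\bigvee\{a\oplus u':a\oplus u'\le\mu^* b\}$ and restricting along the slice yields $b=\bigvee\{a\wedge{!}^*\epsilon^*(u')\}$, in which a summand is certified uniformly below $b$ only when $\epsilon^*(u')=\top$, i.e.\ when $u'$ is genuinely a neighbourhood of $e$. Classically every contributing $u'$ has $\epsilon^*(u')\in\{\bot,\top\}$ and the difficulty evaporates; constructively $\epsilon^*(u')$ may be a nontrivial truth value, and one cannot simply enlarge $u'$ to a neighbourhood (adjoining ${!}^*\neg\epsilon^*(u')$ only produces $\epsilon^*\!=\!\epsilon^*(u')\vee\neg\epsilon^*(u')$). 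The crux is therefore to extract genuine identity-neighbourhoods, and my plan is to do this not by naive point-set continuity but by exploiting the homogeneity of $G$ --- the invariance of $\mu^* b$ under the anti-diagonal action $(x,w)\mapsto(xg^{-1},gw)$ --- together with the openness of $\mu$, so that the covering of the slice is transported from a single positive piece rather than assembled pointwise. The same extraction underlies the halving neighbourhood $v$ above, so I expect to isolate it once, as a lemma asserting that multiplication on an overt localic group is open at the identity, and to reuse it in both places.
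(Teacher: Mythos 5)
Your verification of the uniformity axioms for the $L_u$ matches the paper's: the halving neighbourhood $v$ is obtained exactly as you indicate, by writing $\mu^*(u)$ as a join of rectangles $a\oplus b$ and pulling back along $\epsilon\times\epsilon$. No separate lemma about ``openness of $\mu$ at the identity'' is needed for this, since $\top\le\bigvee_i p_i$ in $\Omega$ \emph{is} the statement $\exists i.\ p_i$, so one may directly extract $a,b$ with $\epsilon^*(a)=\epsilon^*(b)=\top$ and $a\oplus b\le\mu^*(u)$, and set $v=a\wedge b$. Your formal reductions for the right uniformity (transport along $\iota$) and the two-sided uniformity (a finer uniformity only enlarges $\vartriangleleft$) are fine, and slightly cleaner than the paper's ``similar''.

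The genuine gap is in admissibility, at precisely the step you yourself flag as the ``main obstacle'': you correctly reduce to showing $b\le\bigvee\{a\wedge{!}^*\epsilon^*(u')\mid a\oplus u'\le\mu^*(b)\}$ forces $b\le\bigvee_u b^{\ominus u}$, but then you do not prove it, proposing instead an undeveloped detour through homogeneity of $G$ and openness of $\mu$. That detour is unnecessary, and as written the proof is incomplete at its central point. The resolution is the elementary identity $p=\bigvee\{\top\mid p=\top\}$ in $\Omega$, which gives ${!}^*(p)=\bigvee\{1\mid p=\top\}$ in any frame and hence
\[
 a\wedge{!}^*\epsilon^*(u')=\bigvee\{a\mid \epsilon^*(u')=\top\}.
\]
Substituting this into your join yields $b\le\bigvee\{a\mid \epsilon^*(u')=\top,\ a\oplus u'\le\mu^*(b)\}\le\bigvee_u b^{\ominus u}$, which is exactly what you needed. (The paper packages this as the computation $b\oplus\top=(\O G\oplus\epsilon^*)\mu^*(b)=\bigvee\{a\oplus\epsilon^*(u)\mid a\oplus u\le\mu^*(b)\}=\bigvee\{a\oplus\top\mid\epsilon^*(u)=\top,\ a\oplus u\le\mu^*(b)\}$, using the unit axiom to identify $(\O G\oplus\epsilon^*)\mu^*$ with the canonical isomorphism $\O G\cong\O G\oplus\Omega$.) Your instinct that one cannot enlarge $u'$ to a genuine neighbourhood is correct, but no enlargement is required: the subsingleton join performs the extraction of identity neighbourhoods for you, constructively.
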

\begin{proof}
 We will show this for the left uniformity. The proof for the right uniformity is similar and the result for the two-sided uniformity follows from the results for the other two uniformities.
 
 First note that $\{L_u \mid \epsilon^*(u) = \top\}$ is a filter base, since $L_{u \wedge v} = L_u \wedge L_v$.
 We have that each basic entourage $L_u$ is reflexive, since $x^{-1} \cdot x = 1 \in u$.
 For the symmetry condition it is enough to show $L_u^\mathrm{o} = L_{\iota^*(u)}$ and this holds since $x^{-1} y \in u$ if and only if $y^{-1} x = (x^{-1} y)^{-1} \in \iota^*(u)$.
 
 For the transitivity condition, we must find an open neighbourhood of the identity $v$ such that $L_v \circ L_v \le L_u$.
 In the internal logic, we have $1 \cdot 1 = 1 \in u$ and so $\vdash (1,1) \in \mu^*(u)$. Since elements of the form $a \oplus b$ form a base for $\O(G \times G)$, there are $a$ and $b$ such that $(1,1) \in a \oplus b \le \mu^*(u)$.
 Setting $v = a \wedge b$ it follows that $1 \in v$ and $v \oplus v \le \mu^*(u)$.
 Now take $(x,y)$ and $(y,z)$ in $L_v$. Then $x^{-1}y, y^{-1}z \in v \le \mu^*(u)$. Multiplying these we then have $x^{-1}z \in u$ and so $(x,z) \in L_u$.
 Thus, we have shown $L_v \circ L_v \le L_u$ as required.
 
 Finally, we demonstrate admissibility. We first observe that if $\epsilon^*(u) = \top$ and $a \oplus u \le \mu^*(b)$, then $(a \oplus a) \circ L_u \le b \oplus b$.
 To see this it suffices to show $y \in a \land y^{-1}z \in u \vdash_{y,z\colon G} z \in b$ by \cref{lem:uniformly_below_internal_logic}.
 Suppose $y \in a$ and $y^{-1}z \in u$. Then $(y,y^{-1}z) \in a \oplus u \le \mu^*(b)$, so that $z = yy^{-1}z \in b$, as required.
 
 In particular, we have shown that if $\epsilon^*(u) = \top$ and $a \oplus u \le \mu^*(b)$, then $a \vartriangleleft b$.
 So to show admissibility, it suffices to prove that $b \le \bigvee\{ a \in \O G \mid \epsilon^*(u) = \top,\, a \oplus u \le \mu^*(b) \}$.
 But the axiom $x \cdot 1 = x$ means that $(\O G \oplus \epsilon^*)\mu^*$ is the canonical isomorphism $\O G \cong \O G \oplus \Omega$ and so
 \begin{align*}
  b \oplus \top &= (\O G \oplus \epsilon^*)\mu^*(b) \\
    &= (\O G \oplus \epsilon^*)\left(\bigvee \{a \oplus u \mid a \oplus u \le \mu^*(b)\}\right) \\
    &= \bigvee \{a \oplus \epsilon^*(u) \mid a \oplus u \le \mu^*(b)\} \\
    &= \bigvee \{a \oplus \top \mid \epsilon^*(u) = \top,\, a \oplus u \le \mu^*(b)\}
 \end{align*}
 and the desired result follows.
\end{proof}

As before we see this assignment of uniformities is functorial.
\begin{lemma}
 Every homomorphism $f\colon G \to H$ of localic groups is uniform with respect to the left, right or two-sided uniformities.
\end{lemma}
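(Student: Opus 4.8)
The plan is to reduce everything to the left uniformity, since the right uniformity is handled by an entirely symmetric argument and the two-sided uniformity is their join in the lattice of uniformities (a map that preserves two uniformities automatically preserves their join). For the left uniformity it suffices to show that the pullback of each basic entourage $L_u$ on $H$ along $f \times f$ is again a basic entourage on $G$; concretely, I would prove the identity
\[(f \times f)^*(L_u) = L_{f^*(u)}.\]

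The main computation takes place in the internal logic. By definition $(x,y) \in (f \times f)^*(L_u)$ holds precisely when $(f(x), f(y)) \in L_u$, that is, when $f(x)^{-1} f(y) \in u$. Here the homomorphism hypotheses enter: since $f$ commutes with multiplication and inversion we have $f(x)^{-1} f(y) = f(x^{-1}) f(y) = f(x^{-1}y)$ in the internal logic, so the condition becomes $f(x^{-1}y) \in u$, equivalently $x^{-1}y \in f^*(u)$. This is exactly the defining condition for $(x,y) \in L_{f^*(u)}$, which establishes the displayed identity.

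It then remains to check that $f^*(u)$ is a legitimate index for a left entourage on $G$, i.e.\ that it is a neighbourhood of the identity. This is where the final homomorphism axiom $f \epsilon_G = \epsilon_H$ is used: taking frame maps gives $\epsilon_G^* f^* = \epsilon_H^*$, so $\epsilon_G^*(f^*(u)) = \epsilon_H^*(u) = \top$ whenever $u$ is a neighbourhood of the identity in $H$. Hence $L_{f^*(u)}$ is one of the basic entourages generating the left uniformity on $G$ and in particular lies in it, so $f$ is uniform. The right uniformity is treated identically using $f(x)f(y)^{-1} = f(xy^{-1})$, and for the two-sided uniformity one may either invoke the join argument or repeat the computation for $T_u$ to obtain $(f \times f)^*(T_u) = T_{f^*(u)}$ directly.

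I do not expect any serious obstacle here: the only point requiring care is the translation of the group-homomorphism conditions into the internal logic, so as to justify $f(x)^{-1}f(y) = f(x^{-1}y)$ and the neighbourhood identity $\epsilon_G^* f^* = \epsilon_H^*$. Once these are in place the result is immediate, since the pullback of a basic entourage is \emph{literally} a basic entourage rather than merely being refined by one.
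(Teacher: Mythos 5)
Your proposal is correct and follows exactly the paper's argument: the proof there is precisely the observation that $(f \times f)^*(L_u) = L_{f^*(u)}$ (and similarly for $R_u$ and $T_u$), which you have simply spelled out in the internal logic together with the check that $\epsilon_G^* f^* = \epsilon_H^*$ makes $f^*(u)$ a neighbourhood of the identity. No gaps; the extra detail you supply is exactly the content the paper leaves implicit.
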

\begin{proof}
 Simply observe that $(f \times f)^*(L_u) = L_{f^*(u)}$ and similarly for $R_u$ and $T_u$.
\end{proof}

Thus, we obtain three functors $\mathcal{L}$, $\mathcal{R}$ and $\mathcal{T}$ from the category $\OLocGrp$ of overt localic groups to $\UnifLoc$ which each commute with the forgetful functors to $\OLoc$.

We can now proceed as in \cite{banaschewski1999completeness} to show that an overt locale localic group is complete with respect to its two-sided uniformity.

\begin{lemma}\label{lem:group_multiplication_lifts}
 The multiplication $\mu\colon G \times G \to G$ on an overt localic group lifts to a map $\widetilde{\mu}\colon \Cvar G \times \Cvar G \to \Cvar G$ on the completion with respect to the left, right or two-sided uniformity.
\end{lemma}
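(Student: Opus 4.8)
The plan is to apply the lifting criterion of \cref{prop:lift_to_completion_via_entourages} to the multiplication viewed as a map of pre-uniform locales $\mu\colon G\times G\to G$. Since the relevant uniformity on $G$ is admissible we have $\Upsilon G\cong G$, and \cref{prop:completion_preserves_products} gives $\Cvar\Upsilon(G\times G)\cong\Cvar\Upsilon G\times\Cvar\Upsilon G\cong\Cvar G\times\Cvar G$; under this identification a lift $\widetilde{\mu}\colon\Cvar G\times\Cvar G\to\Cvar G$ is exactly a lift of $\mu$ in the sense of \cref{prop:lift_to_completion}. Writing $c=\gamma_{\Upsilon(G\times G)}\upsilon_{G\times G}$ for the completion map of $G\times G$, it therefore suffices to show that $(c\times c)_*(\mu\times\mu)^*(E)$ is a reflexive relation on $\Cvar\Upsilon(G\times G)$ for each basic entourage $E$; since $(c\times c)_*(\mu\times\mu)^*$ preserves meets and reflexive relations are closed under meets and upsets, it is enough to treat the two-sided entourages $T_u$, with the left and right cases $L_u$, $R_u$ being the evident specialisations.

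Following the computation in the proof of \cref{prop:lift_to_completion_via_entourages} (using \cref{lem:adjoint_of_product_map} and that $c_*$ preserves binary meets) one obtains
\[\Delta^*(c\times c)_*(\mu\times\mu)^*(T_u)=\bigvee\{c_*(w)\mid w\oplus w\le(\mu\times\mu)^*(T_u)\}.\]
Here $w\oplus w\le(\mu\times\mu)^*(T_u)$ says precisely that the $\mu$-image of $w$ is $T_u$-small; that is, for $(a,b),(c,d)\in w$ one has $(ab)^{-1}(cd)\in u$ and $(ab)(cd)^{-1}\in u$ in the internal logic. Thus reflexivity reduces to showing that the opens of $G\times G$ whose product-image is $T_u$-small have $c_*$-images joining to $1$ in $\O\Cvar\Upsilon(G\times G)$.

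To produce such opens I would work with the generic regular Cauchy filter on $G\times G$ encoded by $c_*$, which under the product identification corresponds to a pair $(F_1,F_2)$ of generic regular Cauchy filters on $G$; since rectangles $s\oplus t$ with $s\in F_1$ and $t\in F_2$ are cofinal, it is enough to exhibit, generically, $s\in F_1$ and $t\in F_2$ with $s\cdot t$ being $T_u$-small. Continuity of $\mu$ and $\iota$ at the identity first yields a symmetric neighbourhood $v$ of the identity with $vvv\subseteq u$. The delicate term is $b^{-1}a^{-1}cd$: writing it as $(b^{-1}d)\,(d^{-1}(a^{-1}c)d)$ shows that once $t$ is small the factor $b^{-1}d$ is controlled, but the conjugate $d^{-1}(a^{-1}c)d$ of the small element $a^{-1}c$ is not controlled uniformly in $d$. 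I would resolve this by using the second filter to localise the conjugating factor: one first chooses a (not necessarily small) member $t\in F_2$ and only afterwards shrinks $s\in F_1$, so that every conjugation occurs by elements drawn from a single filter member. This order of choice is exactly what the existential quantification over the filter in the internal logic makes available, and it converts the pointwise continuity of conjugation into the required global $T_u$-smallness of $s\cdot t$.

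The hard part will be precisely that $\mu$ is \emph{not} uniformly continuous, so the opens with $T_u$-small image do not form a uniform cover of $G\times G$; consequently one cannot simply appeal to the Cauchy axiom (relation (iv) of the presentation) or to a star-refinement, since these deliver only \emph{uniformly} small opens, whose images are controlled solely under uniform continuity. The whole difficulty is thus concentrated in showing that the generic Cauchy filter nevertheless meets the merely topological cover by opens with $T_u$-small image, which is where the convergence of the filter in the completion and the homogeneity of $G$ must be combined. The two-sided uniformity is essential here precisely because it is what lets both $x^{-1}y$ and $xy^{-1}$, and hence both the left and right conjugation effects, be bounded simultaneously; the left and right uniformities are then recovered as one-sided restrictions of the same computation.
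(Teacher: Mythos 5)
Your set-up is the same as the paper's (reduce via \cref{prop:completion_preserves_products} and \cref{prop:lift_to_completion_via_entourages} to showing that $(\gamma_G\times\gamma_G)_*\mu^*$ sends basic entourages to reflexive relations, then compute the diagonal restriction as a join over rectangles $a\oplus b$), and you have correctly located the crux: $\mu$ is not uniformly continuous, so the rectangles with $T_u$-small image do not form a uniform cover and the Cauchy axiom cannot be invoked directly. But the proposal stops exactly there --- your last paragraph concedes that ``the whole difficulty is thus concentrated'' in this step --- and the route you sketch for it does not work. With your decomposition $b^{-1}a^{-1}cd=(b^{-1}d)\,(d^{-1}(a^{-1}c)d)$, fixing a filter member $t$ for the second coordinate and then shrinking $s$ requires $s^{-1}s\subseteq\bigcap_{d\in t}dvd^{-1}$. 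That intersection is over all points of an open $t$ and need not contain any neighbourhood of the identity (this is precisely the failure of equicontinuity of conjugation in a non-SIN group), so the opens $s$ ``good for $t$'' need not contain a uniform cover and the inner join $\bigvee_s[s\in F_1]$ cannot be evaluated to $1$ by the Cauchy axiom. ``Choosing $t$ first'' does not repair this: smallness of $t$ controls $b^{-1}d$ but says nothing about where the conjugator $d$ sits.

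The paper's resolution is an existential, not universal, localisation. First fix $v$ with $L_v\circ L_v\circ L_v\le L_u$ and a positive $b$ with $b\oplus b\le L_v$; then set $w_b=\{z\colon G\mid\exists\tilde y,\tilde y'\colon b.\ \tilde y^{-1}z\tilde y'\in v\}$, i.e.\ the \emph{union} $b\,v\,b^{-1}$, which \emph{is} an open neighbourhood of the identity, so $L_{w_b}$ is an entourage and $\bigvee\{[a\in F]\mid a\oplus a\le L_{w_b}\}=1$ for each such $b$. The condition $a\oplus a\le L_{w_b}$ only guarantees that \emph{some} pair $\tilde y,\tilde y'\in b$ conjugates $x^{-1}x'$ into $v$; the passage to arbitrary $y,y'\in b$ is paid for by the two outer factors in $y^{-1}x^{-1}x'y'=(y^{-1}\tilde y)(\tilde y^{-1}x^{-1}x'\tilde y')(\tilde y'^{-1}y')\in vvv$, using left-invariance of $L_v$ and smallness of $b$. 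This is the idea missing from your argument. Two smaller points: your reduction of the left and right cases to the two-sided one is not valid, since the three uniformities yield three different completions and the criterion must be checked for the entourages of, and Cauchy filters on, each one separately (the paper does left and right first and then combines them for the two-sided case); and the two-sided uniformity is not ``essential'' for this lemma --- it is needed later, for lifting inversion in the completeness theorem.
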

\begin{proof}
 Let us first show this for the completion with respect to the left uniformity.
 By \cref{prop:completion_preserves_products} and \cref{prop:lift_to_completion_via_entourages} it suffices to show that $(\gamma_G \times \gamma_G)_* \mu^*$ sends entourages to reflexive relations.
 Here $(\gamma_G \times \gamma_G)_* \mu^*(c) = \bigvee\{[a \in F] \oplus [b \in F] \mid a \oplus b \le \mu^*(c)\}$
 and so for each open neighbourhood of the identity $u$ we must show \[\bigvee\{[a \in F] \oplus [b \in F] \mid a \oplus b \oplus a \oplus b \le (\mu \times \mu)^*(L_u)\} = 1.\]
 Note that in the internal logic the inequality in above join says that
 \begin{equation}\tag{$\ast$}\label{eq:to_prove_in_showing_mu_lifts}
  x,x' \in a \land y,y' \in b \vdash_{x,y,x',y'\colon G} y^{-1}x^{-1}x'y' \in u.
 \end{equation}
 
 Consider $b$ `sufficiently small' in the sense that $b \oplus b \le L_v$ for some $v$ to be determined
 and set $w = \{ z \colon G \mid \exists \tilde{y}, \tilde{y}' \colon b.\ \tilde{y}^{-1}z\tilde{y}' \in v \}$ in the internal logic. Note that $\vdash 1 \in w$.
 
 Now take $a \in \O G$ such that $a \oplus a \le L_w$. Note that since $L_v$ and $L_w$ are entourages taking the join of $[a \in F] \oplus [b \in F]$ over all such $a$ and $b$ gives \[\bigvee\{[a \in F] \oplus [b \in f] \mid a \oplus a \le L_{w_b},\, b \oplus b \le L_v\} = \bigvee \{1 \oplus [b \in f] \mid b \oplus b \le L_v\} = 1.\]
 This will imply that the desired join is also $1$ once we show that these choices for $a$ and $b$ satisfy condition \eqref{eq:to_prove_in_showing_mu_lifts} above.
 Expanding the definition of $a$ we have $x, x' \in a \vdash_{x,x' \colon G} \exists \tilde{y}, \tilde{y}' \colon b.\ \tilde{y}^{-1}x^{-1}x'\tilde{y}' \in v$,
 which is close to the desired condition. We must just ensure that each $y$ and $y'$ in $b$ are close to the given $\tilde{y}$ and $\tilde{y}'$.
 
 Indeed, if $x,x' \in a$ and $y,y' \in b$ then we know there exist $\tilde{y}, \tilde{y}' \in b$ such that $(x\tilde{y}, x'\tilde{y}') \in L_v$.
 But also, $y, \tilde{y} \in b$ implies $(y,\tilde{y}) \in L_v$, which in turn gives $(xy,x\tilde{y}) \in L_v$.
 Similarly, $(x'\tilde{y}',x'y') \in L_v$. Combining these we find $(xy,x'y') \in L_v \circ L_v \circ L_v$. Choosing $L_v$ so that $L_v \circ L_v \circ L_v \le L_u$ then yields the desired result.
 
 Thus, we have shown $(\gamma_G \times \gamma_G)_* \mu^*$ sends the entourages $L_u$ to reflexive relations.
 Entirely dually, the entourages $R_u$ for the right uniformity are also sent to reflexive relations.
 
 To prove a similar result for the basic two-sided entourages $T_u = L_u \wedge R_u$, we consider both $b \oplus b \le T_v$ and $a \oplus a \le T_{w_b}$ as above, as well as $a' \oplus a' \le T_{v'}$ and $b' \oplus b' \le T_{w'_{\smash{a'}}}$ as for the right uniformity. Then the join of $[a \wedge a' \in F] \oplus [b \wedge b' \in F]$ over all such $a$, $a'$, $b$ and $b'$ is again a cover, since it is the meet of two covers.
 Finally, for $x, x' \in a \wedge a'$ and $y,y' \in b \wedge b'$, we find $(xy,x'y') \in L_u$ as above and dually $(xy,x'y') \in R_u$. Thus, $(xy, x'y') \in L_u \wedge R_u = T_u$.
 Thus, $\bigvee\{[a \in F] \oplus [b \in F] \mid a \oplus b \oplus a \oplus b \le (\mu \times \mu)^*(T_u)\}$ is also $1$ and $(\gamma_G \times \gamma_G)_* \mu^*$ sends two-sided entourages to reflexive relations.
 Hence $\mu$ lifts to the completion with respect to the two-sided uniformity.
\end{proof}

\begin{theorem}
 Every overt localic group $G$ is complete with respect to its two-sided uniformity.
\end{theorem}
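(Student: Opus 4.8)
The strategy is to show that the completion map $\gamma_G\colon G \to \Cvar G$ with respect to the two-sided uniformity is an isomorphism, which by \cref{thm:completion} is exactly what it means for $G$ to be complete. The key is \cref{lem:group_multiplication_lifts}, which we have just proved: the multiplication $\mu$ lifts to $\widetilde{\mu}\colon \Cvar G \times \Cvar G \to \Cvar G$. First I would observe that the inversion $\iota$ and the unit $\epsilon$ also lift. For $\epsilon$ this is automatic since $1$ is complete. For $\iota$, since inversion is a uniform isomorphism exchanging the left and right uniformities (which agree for the two-sided uniformity, as $\iota^*T_u = T_{\iota^*(u)}$), it is uniform and thus lifts by functoriality of $\Cvar$. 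These lifts make $\Cvar G$ into a localic group in such a way that $\gamma_G$ becomes a homomorphism, using that the structure maps and $\gamma_G$ are compatible by construction.

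The heart of the argument is then the following observation, which is the standard trick for showing a dense subgroup of a complete group is the whole group. Since $\gamma_G\colon G \hookrightarrow \Cvar G$ is a strongly dense uniform embedding, I would argue that $\gamma_G$ admits a retraction in $\UnifLoc$, forcing it to be an isomorphism. The plan is to construct a uniform map $\Cvar G \to G$ splitting $\gamma_G$. Concretely, one can use the completeness of $\Cvar G$ itself (it \emph{is} a completion, hence complete by \cref{thm:completion}) together with the lifted group structure: the point is that in a complete group, the embedding of a strongly dense subgroup into its own completion must already be surjective because translations and the group operations allow one to ``solve'' for the missing points.

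The cleanest route I would actually take exploits \cref{cor:strongly_dense_epic}: strongly dense uniform maps are epic. Since $\gamma_G$ is a strongly dense uniform embedding, it is both an extremal monomorphism (being a uniform embedding) and an epimorphism. To conclude it is an isomorphism, it suffices to produce any uniform map $s\colon \Cvar G \to G$ with $s\gamma_G = \id_G$; then $\gamma_G$ is a split mono and an epi, hence iso. I would build $s$ by exhibiting $\Cvar G$ as strongly densely embedded in $G$ via the lifted structure: the lifted multiplication $\widetilde{\mu}$ restricted against a chosen Cauchy-filter translation yields the required retraction, mirroring the spatial proof in \cite{banaschewski1999completeness}.

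\medskip

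\textbf{The main obstacle.} The difficult step is verifying that the lifted operations genuinely make $\Cvar G$ a localic group and that the resulting retraction is \emph{uniform} (not merely a locale map). The group axioms for $\widetilde{\mu}$, $\widetilde{\iota}$, $\widetilde{\epsilon}$ must be checked on $\Cvar G$, but here the strong density of $\gamma_G$ together with \cref{cor:strongly_dense_epic} does the work: each axiom is an equation between uniform maps out of a product of copies of $\Cvar G$, and since finite products of strongly dense uniform embeddings are strongly dense (the product lemma combined with \cref{prop:completion_preserves_products}), these equations may be checked after precomposing with $\gamma_G$-powers, where they reduce to the known axioms on $G$. The genuinely delicate part is ensuring the retraction $s$ is uniform, which requires showing that $(s \times s)^*$ sends the two-sided entourages $T_u$ on $G$ back into entourages on $\Cvar G$; I expect this to follow by an entourage computation parallel to \cref{lem:group_multiplication_lifts}, using that $s$ is assembled from $\widetilde{\mu}$ and the uniform-embedding property of $\gamma_G$.
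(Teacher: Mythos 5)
Your first half --- lifting $\mu$, $\iota$ and $\epsilon$ to $\Cvar G$ and verifying the group axioms there by checking them on the strongly dense sublocale $G^n$ of $(\Cvar G)^n$ --- matches the paper's proof. One caveat: $\widetilde{\mu}$ is only known to be a \emph{locale} map, not a uniform one, so \cref{cor:strongly_dense_epic} does not directly give the epicness you want; the paper instead observes that $\Cvar G$, being uniformisable, is weakly Hausdorff, so the sublocale on which each axiom holds is weakly closed and contains the strongly dense $G^n$, hence is all of $(\Cvar G)^n$.

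The genuine gap is in the endgame. You propose to finish by constructing a uniform retraction $s\colon \Cvar G \to G$ of $\gamma_G$, with ``translations and the group operations allowing one to solve for the missing points''. No such construction can exist at the level of generality you describe: the identical heuristic applies to the topological group $\Q$ sitting densely in $\R$, where there is no retraction and the group is not complete. Any correct conclusion must therefore use something specifically localic, and the ingredient you are missing is the \emph{closed subgroup theorem}: every overt localic subgroup of a localic group is weakly closed (constructively, \cite{johnstone1989constructive}); this is also what the cited classical argument in \cite{banaschewski1999completeness} relies on, rather than a retraction. Once $\Cvar G$ is known to be a localic group containing $G$ as an overt, strongly dense localic subgroup, that theorem forces $G$ to be weakly closed in $\Cvar G$, and weakly closed together with strongly dense gives $G \cong \Cvar G$. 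Without this input your outline does not close.
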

\begin{proof}
 By the previous lemma the multiplication $G$ lifts to the completion $\Cvar G$. Moreover, the inversion map $\iota$ simply exchanges left and right entourages, so it uniformly continuous with respect to the two-sided uniformity and hence clearly also lifts to the completion. Of course, the unit lifts too. Since $G^n$ is a strongly dense sublocale of $(\Cvar G)^n$, the group axioms hold on a strongly dense sublocale of $\Cvar G$.
 But since $\Cvar G$ is uniformisable, it is weakly Hausdorff and so the sublocale on which the axioms hold is weakly closed. Thus, they must hold everywhere.
 
 It follows that $\Cvar G$ is a localic group and $G$ is a strongly dense localic subgroup. But it is well-known that an overt subgroup of a localic group is weakly closed (see \cite{johnstone1989constructive} for a constructive proof). Thus, $G \cong \Cvar G$ and $G$ is complete.
\end{proof}

\subsection{Real numbers and the completion of rings}

Another important application of completions is the construction of the localic ring of reals as the completion of the ring of rational numbers.
The rationals are best viewed as a discrete locale $\Q$. This has a pre-uniform structure which can be given by the usual Euclidean metric or directly from a base of entourages of the form $E_q = \{(x, x') \in \Q \times \Q \mid \abs{x-x'} < q\}$ for $q \in \Q_+$.

We can use \cref{prop:completion_from_a_base} to obtain a presentation for the completion of $\Q$. An appropriate set $\mathcal{B}$ of basic uniform covers consists of $C_q = {\downarrow}\{(p,p+q) \subseteq \Q \mid p \in \Q\}$ for $q \in \Q_+$ and where $(r,s)$ denotes the interval of rationals lying strictly between $r$ and $s$ (for $r < s$).
The resulting presentation of $\Cvar\Upsilon \Q$ has generators $(\!(r,s)\!)$ for $r < s \in \Q$ and the relations:
\begin{itemize}
  \item $(\!(r,s)\!) \wedge (\!(r',s')\!) = (\!(r \vee r', s \wedge s')\!)$ for $r,r' < s,s'$,
  \item $(\!(r,s)\!) \wedge (\!(r',s')\!) = 0$ otherwise,
  \item $\bigvee_{p \in \Q} (\!(p, p+q)\!) = 1$ for $q > 0$,
  \item $(\!(r,s)\!) \le \bigvee_{r < r' < s' < s} (\!(r',s')\!)$.
 \end{itemize}
It is easy to see this is isomorphic to the usual frame of reals $\O \R$ as presented in terms of Dedekind cuts (with generators $\ell_q$ and $u_q$ meaning $q$ is in the lower or upper cut respectively) via frame homomorphisms sending $(\!(r,s)\!)$ to $\ell_r \wedge u_s$ in the one direction and sending $\ell_r$ to $\bigvee_{s > r} (\!(r,s)\!)$ and $u_s$ to $\bigvee_{r < s} (\!(r,s)\!)$ in the other direction. A proof of a similar equivalence can be found in \cite[Chapter XIV, Proposition 1.2.1]{picado2012book}.

Now let us discuss the lifting of the ring operations to $\R$. We note that the uniformity on $\Q$ is translation invariant.
\begin{definition}
 A uniformity on a localic abelian group $G$ is \emph{translation invariant} if it has a base of entourages $E$ such that $(x,y) \in E \vdash_{x,y,z\colon G} (x+z,y+z) \in E$.
\end{definition}
For translation-invariant entourage $E$ we always have $(x,y) \in E \dashv\vdash_{x,y\colon G} (x-y,0) \in E$ (by taking $z = \pm y$). Now since $u \coloneqq \{z\colon G \mid (z,0) \in E\}$ is an open neighbourhood of the identity, each basic entourage in a translation-invariant uniformity is of the form $T_u$ (the three group uniformities coincide for abelian $G$). In this way, a translation-invariant uniformity can be specified by giving a collection of open neighbourhoods of the identity.

The operations of a localic abelian group always respect translation-invariant pre-uniform structures.

\begin{lemma}
 Let $\E$ be a translation-invariant uniformity on a localic abelian group $G$. Then the operations of $G$ are uniformly continuous with respect to $\E$.
\end{lemma}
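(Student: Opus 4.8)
The plan is to verify that each structure map of $G$ — the addition $\mu\colon G \times G \to G$, the inversion $\iota\colon G \to G$, and the unit $\epsilon\colon 1 \to G$ — is a morphism of pre-uniform locales, where $G \times G$ carries the product pre-uniformity (the initial uniformity with respect to the two projections $p_1, p_2\colon G \times G \to G$). Since $\E$ is translation invariant it has a base of translation-invariant entourages, and because the source uniformities are filters and $(f \times f)^*$ is monotone, it suffices to check the defining condition $(f \times f)^*(E) \in \E_{\mathrm{source}}$ on such basic $E$. I write each basic entourage as $E$ with associated neighbourhood $u = \{z\colon G \mid (z,0) \in E\}$ of the identity, so that $(x,y) \in E \dashv\vdash_{x,y} x - y \in u$ in the internal logic, as recorded just before the statement. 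The unit is then immediate: $(\epsilon \times \epsilon)^*(E)$ is the truth value of $(0,0) \in E$, which holds by reflexivity, so it equals the top entourage of the terminal object. For inversion I would compute $(\iota \times \iota)^*(E)$ directly: at a pair $(x,y)$ it asserts $(-x,-y) \in E$, i.e.\ $(-x)-(-y) = y - x \in u$, which is exactly the condition $(y,x) \in E$ defining the symmetric entourage $E^\mathrm{o}$. Hence $(\iota \times \iota)^*(E) = E^\mathrm{o} \in \E$ and $\iota$ is uniform (this reflects the fact that $\iota$ is a group homomorphism when $G$ is abelian).

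The real content is addition. Pulling $E$ back along $\mu \times \mu\colon (G\times G)\times(G\times G) \to G \times G$ gives, in the internal logic, the relation holding of $((x_1,x_2),(y_1,y_2))$ precisely when $(x_1 + x_2) - (y_1 + y_2) = (x_1 - y_1) + (x_2 - y_2) \in u$. By the third (transitivity) axiom of the pre-uniform locale $(G,\E)$ there is an entourage $F \in \E$, with neighbourhood $v$, such that $F \circ F \le E$; unwinding this for translation-invariant entourages says exactly that $v + v \subseteq u$, that is $v \oplus v \le \mu^*(u)$. Now consider the basic product entourage on $G \times G$ obtained from the projections, namely $(p_1 \times p_1)^*(F) \wedge (p_2 \times p_2)^*(F)$, which holds of $((x_1,x_2),(y_1,y_2))$ iff $x_1 - y_1 \in v$ and $x_2 - y_2 \in v$. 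In that case $(x_1 - y_1) + (x_2 - y_2) \in v + v \subseteq u$, so this product entourage lies below $(\mu \times \mu)^*(E)$. Since the product entourage belongs to the product uniformity and uniformities are upward closed, $(\mu \times \mu)^*(E) \in \E_{G \times G}$, and $\mu$ is uniform.

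The only genuinely delicate step is the bookkeeping for addition on the fourfold product $(G \times G) \times (G \times G)$ — keeping straight which coordinates the two copies of $\mu$ act on and which the projections extract — together with the passage from ``$F \circ F \le E$'' to the pointwise statement ``$v + v \subseteq u$''. Once translation invariance is used to replace entourages by neighbourhoods of the identity, everything reduces to the single inclusion $v + v \subseteq u$ supplied by the transitivity axiom, and no further group-theoretic input is needed.
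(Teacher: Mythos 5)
Your proof is correct and follows essentially the same route as the paper: the only substantive case is addition, and both arguments take an entourage $F$ with $F\circ F\le E$ from the translation-invariant base and use translation invariance to convert coordinatewise closeness into closeness of the sums (your identity $(x_1+x_2)-(y_1+y_2)=(x_1-y_1)+(x_2-y_2)$ together with $v+v\subseteq u$ is exactly the paper's two-step chain $(x+y,x'+y),(x'+y,x'+y')\in F$). The only point worth making explicit is that the $F$ supplied by the transitivity axiom must be shrunk to a basic translation-invariant entourage $T_v$ before writing it as a neighbourhood $v$ of the identity, which is immediate since $\E$ has such a base.
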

\begin{proof}
 Inversion is uniformly continuous by commutativity and it is trivial that the unit is uniformly continuous. For the addition we consider an entourage $E \in \E$ and take a symmetric entourage $F \in \E$ such that $F \circ F \le E$. To show addition is uniformly continuous it is enough to show that $(x,x'), (y,y') \in F \vdash_{x,x',y,y'\colon G} (x+y, x'+y') \in E$. But if $(x,x'), (y,y') \in F$ then $(x+y,x'+y) \in F$ and $(x'+y,x'+y') \in F$ by translation invariance. Thus, $(x+y,x'+y') \in F \circ F \le E$ as required.
\end{proof}
\begin{remark}
 This yields an analogue of \cref{lem:group_multiplication_lifts} restricted to abelian groups, but for general translation-invariant uniformities.
 In fact, the restriction to the abelian case is not necessary.
 If we define a notion of left- or right-translation-invariant uniformity on a non-abelian group $G$, then a simple modification of the proof of \cref{lem:group_multiplication_lifts} shows that the multiplication on $G$ lifts to the `pre-uniform completion' of $G$ with respect to any such uniformity --- and for any uniformity with a base of entourages of the form $T_u$.
\end{remark}
\begin{corollary}
 The (pre-uniform) completion with respect to a translation-invariant uniformity preserves the abelian group structure of $G$.
 Moreover, this gives a reflection from the category of translation-invariant pre-uniform localic abelian groups onto the category of localic abelian groups (with their canonical group uniformities).
\end{corollary}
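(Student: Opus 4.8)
The plan is to run $\Cvar\Upsilon$ as the reflector and the maps $\gamma_{\Upsilon G}\upsilon_G$ as the units, exploiting the fact that --- unlike the general multiplication treated in \cref{lem:group_multiplication_lifts} --- addition on $G$ is now \emph{genuinely} uniformly continuous by the preceding lemma. Since $\Cvar\Upsilon$ is a functor on uniform maps and preserves finite products by \cref{prop:completion_preserves_products}, applying it to the uniformly continuous structure maps $\mu$, $\iota$ and $\epsilon$ produces maps $\widetilde{\mu}\colon \Cvar\Upsilon G \times \Cvar\Upsilon G \to \Cvar\Upsilon G$, $\widetilde{\iota}$ and $\widetilde{\epsilon}$. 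The group axioms are equalities between composites of these maps with diagonals and projections; a product-preserving functor sends a commuting such diagram to a commuting diagram, so \emph{every} group axiom transfers for free and $\Cvar\Upsilon G$ is a localic abelian group. Naturality of the unit (again via product preservation, giving $\eta_{G \times G} = \eta_G \times \eta_G$) shows $\eta_G \coloneqq \gamma_{\Upsilon G}\upsilon_G$ is a group homomorphism. This entirely avoids the weak-Hausdorff transfer argument used for the two-sided uniformity, precisely because translation invariance makes the operations uniform.

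The crux is identifying the completion uniformity $\widetilde{\E}$ on $\Cvar\Upsilon G$ with its canonical group uniformity $\E^{\mathrm{grp}}$. A translation-invariant uniformity has entourages of the form $\delta^*(u)$ for $u$ a neighbourhood of the identity, where $\delta(x,y) = x - y$; since $\delta$ lifts functorially to the difference map $\widetilde{\delta}$, the completion entourages are again pullbacks $\widetilde{\delta}^*(\eta_*u)$ and hence translation invariant, giving at once $\widetilde{\E} \subseteq \E^{\mathrm{grp}}$. The reverse inclusion is the main obstacle, and is where the constructive content sits: I must show the completion's identity-neighbourhoods form a neighbourhood base at the identity. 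Given any neighbourhood $b$ of the identity, admissibility of $\widetilde{\E}$ writes $b = \bigvee_{c \vartriangleleft b} c$; applying overtness of the identity point $\epsilon$ (an overt sublocale) to this join extracts a \emph{single} $c \vartriangleleft b$ with $\epsilon \between c$, avoiding the constructively invalid step of splitting a join that equals $\top$. Since $\epsilon^*$ preserves meets, every basic $w$ of the witnessing uniform cover with $\epsilon \between w$ satisfies $c \between w$, so the uniform neighbourhood $\st(\epsilon, \widehat{\,\cdot\,}) \le \st(c,\widehat{\,\cdot\,}) \le b$ by \cref{lem:star_is_inflationary}; this places a completion entourage below $\delta^*(b)$ and yields $\E^{\mathrm{grp}} \subseteq \widetilde{\E}$, hence equality.

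With the uniformities identified, the reflection follows. The inclusion of localic abelian groups (with their group uniformities) into translation-invariant pre-uniform localic abelian groups is full, since every localic group homomorphism is automatically uniform by the earlier lemma, and $\eta_G$ is a morphism of the source category by the identification just established. For the universal property, let $f\colon G \to H$ be a uniform homomorphism into a localic abelian group $H$; as $H$ is complete with respect to its group uniformity (by the theorem that overt localic groups are complete), the universal property of the completion yields a unique uniform map $\widetilde{f}\colon \Cvar\Upsilon G \to H$ with $\widetilde{f}\eta_G = f$, and functoriality of $\Cvar\Upsilon$ (equivalently $\widetilde{f} = \eta_H^{-1}\,\Cvar\Upsilon f$, using that $\eta_H$ is an isomorphism of groups) shows $\widetilde{f}$ is a homomorphism. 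Uniqueness is immediate because $\eta_G$ is strongly dense and hence epic by \cref{cor:strongly_dense_epic}. This exhibits $\Cvar\Upsilon$ as left adjoint to the inclusion, giving the claimed reflection.
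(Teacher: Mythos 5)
The paper states this corollary without proof, so there is no official argument to compare against; judged on its own terms, your proposal follows what is clearly the intended route and is essentially correct. The first and third paragraphs are sound: since the preceding lemma makes $\mu$, $\iota$ and $\epsilon$ genuinely uniform (unlike the situation in \cref{lem:group_multiplication_lifts}), the product-preserving functor $\Cvar\Upsilon$ of \cref{prop:completion_preserves_products} transports the whole group-object structure, and the adjunction then follows from the completeness of overt localic abelian groups with respect to their (two-sided $=$ canonical) uniformity together with \cref{cor:strongly_dense_epic}. The one step that does not quite close as written is in your identification of the completion uniformity with the canonical group uniformity: from $\st(\epsilon,U) \le \st(c,U) \le b$ you conclude that ``this places a completion entourage below $\delta^*(b)$'', but $T_{\st(\epsilon,U)}$ is only known to be an entourage of the completion if $\st(\epsilon,U)$ contains one of the basic identity neighbourhoods $u'$, which you have not shown (and which is not immediate: a point of $u'$ need not lie in a single positive $w$ with $w \oplus w \le T_{u'}$ that also meets the identity). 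The repair is easy and stays within your framework: express $c \vartriangleleft b$ in entourage form as $T_{u'} \circ (c \oplus c) \le b \oplus b$ for a basic $u'$, apply \cref{lem:uniformly_below_internal_logic} to obtain $y \in c \land (y,z) \in T_{u'} \vdash z \in b$, and substitute the identity point for $y$ (legitimate since $\epsilon^*(c) = \top$) to get $u' \wedge \iota^*(u') \le b$ directly; as the symmetric basic neighbourhoods may be assumed to form a base, this exhibits a basic completion entourage below $T_b$ and yields the desired equality of uniformities.
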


Thus, in particular the additive structure on $\Q$ extends to $\R$. That the entire ring structure extends to $\R$ is a consequence of the following general result, which is a pointfree analogue of \cite[III, \S 6.5, Proposition 6]{bourbaki2013general}.
\begin{proposition}\label{prop:localic_ring_completion}
 Let $R$ be a localic ring equipped with a pre-uniform structure that is translation invariant with respect to its additive group.
 Furthermore, suppose the multiplication $\mu_\times\colon R \times R \to R$ is `continuous at $0$' with respect to the pre-uniform structure in the sense that whenever $T_u$ is an entourage, there is an entourage $T_{v}$ such that $v \oplus v \le \mu_\times^*(u)$.
 Then $\Cvar\Upsilon R$ has the structure of a localic ring and the map $\gamma_{\Upsilon R} \upsilon_R\colon R \to \Cvar\Upsilon R$ is a localic ring homomorphism.
\end{proposition}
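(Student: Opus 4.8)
The plan is to build the ring structure on $\Cvar\Upsilon R$ one operation at a time, transporting it along the strongly dense map $\gamma_{\Upsilon R}\upsilon_R$, and then to discharge the equational ring axioms by a weak-Hausdorff density argument. Since the pre-uniformity is translation invariant for the additive group, the corollary that the completion with respect to a translation-invariant uniformity preserves the abelian group structure already makes $\Cvar\Upsilon R$ a localic abelian group for which $\gamma_{\Upsilon R}\upsilon_R$ is an additive homomorphism, so the additive unit, negation and addition are in place; the multiplicative unit lifts trivially as a map out of the terminal locale. The whole task therefore reduces to lifting the multiplication $\mu_\times\colon R\times R\to R$.

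For this I would use \cref{prop:lift_to_completion_via_entourages}. Identifying $\Cvar\Upsilon(R\times R)$ with $\Cvar\Upsilon R\times\Cvar\Upsilon R$ via \cref{prop:completion_preserves_products}, it suffices to show that $(\gamma_{\Upsilon R}\upsilon_R\times\gamma_{\Upsilon R}\upsilon_R)_*(\mu_\times\times\mu_\times)^*$ sends each basic entourage $T_u$ to a reflexive relation on $\Cvar\Upsilon(R\times R)\cong\Cvar\Upsilon R\times\Cvar\Upsilon R$. Writing $\gamma\upsilon=\gamma_{\Upsilon R}\upsilon_R$ and proceeding exactly as in \cref{lem:group_multiplication_lifts} (applying $\Delta^*$ and \cref{lem:adjoint_of_product_map}), this comes down to proving
\[\bigvee\{[a\in F]\oplus[b\in F]\mid a\oplus b\oplus a\oplus b\le(\mu_\times\times\mu_\times)^*(T_u)\}=1,\]
where the displayed condition unwinds in the internal logic (cf.\ \cref{lem:uniformly_below_internal_logic}) to $x,x'\in a\land y,y'\in b\vdash_{x,x',y,y'\colon R}xy-x'y'\in u$. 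I would then try to exhibit enough admissible pairs $(a,b)$ by expanding $xy-x'y'=(x-x')y+x'(y-y')$ and forcing each summand into a prescribed small neighbourhood of $0$, using the continuity-at-$0$ hypothesis $v\oplus v\le\mu_\times^*(u')$ together with translation invariance to reduce all entourages to neighbourhoods of $0$.

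The crux — and the place where the group argument of \cref{lem:group_multiplication_lifts} does not simply carry over — is the two mixed terms $(x-x')y$ and $x'(y-y')$. In the group case the `large' parts cancelled, because $x^{-1}x'$ is automatically close to the identity; here the additive difference $x-x'$ is small but the surviving factors $y$ and $x'$ are not, and multiplication is not uniformly continuous, so continuity at $0$ cannot by itself bound a product of one small and one unrestricted factor. The resolution is the pointfree analogue of Bourbaki's Cauchy-filter estimate. Since the relevant join is over generators $[a\oplus b\in F]$ and every Cauchy filter meets each uniform cover, I would first replace $R\times R$ by a \emph{located small box} $a_0\oplus b_0$ drawn from a uniform cover, so that $\bigvee[a_0\oplus b_0\in F]=1$ by Cauchiness, and then refine within it. Inside $a_0\oplus b_0$ one introduces, in the internal logic, witnesses $x_0\in a_0$ and $y_0\in b_0$ and writes $(x-x')y=(x-x')y_0+(x-x')(y-y_0)$ and $x'(y-y')=x_0(y-y')+(x'-x_0)(y-y')$; the two `small times small' terms are controlled by continuity at $0$, while the remaining terms $(x-x')y_0$ and $x_0(y-y')$ are controlled by the continuity of $\mu_\times$ — which, being a locale map, is automatically separately continuous — at the single elements $y_0$ and $x_0$. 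This lets one shrink to $a\vartriangleleft a_0$, $b\vartriangleleft b_0$ with the oscillation of $\mu_\times$ on $a\times b$ forced below $u$, so that the corresponding $[a\oplus b\in F]$ populate the join. I expect the delicate step to be exactly this last manoeuvre: the refining neighbourhoods depend on the chosen witnesses, so carrying the `separate continuity at a point' argument out in the pointfree setting (where one has no actual points, only the Cauchy filter and the internal logic) is the technical heart of the proof.

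Once $\mu_\times$ is lifted to $\widetilde{\mu_\times}$, each ring axiom is an equality of two locale maps $(\Cvar\Upsilon R)^k\to\Cvar\Upsilon R$ that agree after precomposition with the strongly dense map $(\gamma_{\Upsilon R}\upsilon_R)^k\colon R^k\to(\Cvar\Upsilon R)^k$ (strongly dense because products of strongly dense maps are strongly dense, again using \cref{prop:completion_preserves_products} to identify the domains). As $\Cvar\Upsilon R$ is uniformisable, hence weakly Hausdorff, the equaliser of the two maps is weakly closed; containing a strongly dense sublocale, it is all of $(\Cvar\Upsilon R)^k$ — this is precisely the density argument used to prove that overt localic groups are complete. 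Thus every axiom holds on $\Cvar\Upsilon R$, making it a localic ring, and $\gamma_{\Upsilon R}\upsilon_R$ is a ring homomorphism because it commutes with each lifted operation by the defining property of the lifts in \cref{prop:lift_to_completion}.
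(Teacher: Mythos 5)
Your overall architecture --- additive structure from the translation-invariance corollary, lifting $\mu_\times$ via \cref{prop:completion_preserves_products} and \cref{prop:lift_to_completion_via_entourages}, and discharging the ring axioms by the weak-Hausdorff/strong-density argument --- is exactly the paper's, and your reduction to the sequent $x,x' \in a \land y,y' \in b \vdash xy-x'y' \in u$ together with the four-term decomposition using witnesses $x_0, y_0$ matches the paper's expansion $xy-x'y' = \tilde{x}(y-y') + (x - \tilde{x})(y-y') + (x-x')\tilde{y} + (x-x')(y' - \tilde{y})$. However, the step you yourself flag as ``the technical heart'' is a genuine gap as you have left it. ``Continuity of $\mu_\times$ at the single element $y_0$'' is not a usable device here: any neighbourhood of $0$ extracted from it would depend on the witness $y_0$, whereas the opens $a$ and $b$ indexing the join must be fixed \emph{before} the internal-logic argument introduces any witnesses. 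Choosing $a$ or $b$ as a function of an existentially bound variable is precisely the circularity that has to be broken, and separate continuity at a point does not break it.

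The paper's resolution (the same trick as in \cref{lem:group_multiplication_lifts}) is to push the witness \emph{into the definition of the neighbourhood} rather than the other way around. Having fixed $u'$ (so that four summands in $u'$ add to something in $u$) and an entourage $T_v$ with $v \oplus v \le \mu_\times^*(u')$, and having taken $a$ with $a \oplus a \le T_v$, one sets $w = \{z \colon R \mid z \in v \land \exists \tilde{x}\colon a.\ \tilde{x}z \in u'\}$. This is an open containing $0$ --- the existential over the overt (positive) $a$ is an open predicate --- and the constraint $b \oplus b \le T_w$ now \emph{produces} the witness inside the internal logic: $y, y' \in b$ gives $y - y' \in w$, hence some $\tilde{x} \in a$ with $\tilde{x}(y-y') \in u'$, while $(x - \tilde{x})(y-y') \in u'$ follows from continuity at $0$ because $x, \tilde{x} \in a$ forces $x - \tilde{x} \in v$. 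The symmetric choice controls the remaining two terms, and one takes the meet of the two choices of $(a,b)$. With this device in place, your auxiliary layer of ``located boxes'' $a_0 \oplus b_0$ and the refinements $a \vartriangleleft a_0$, $b \vartriangleleft b_0$ become unnecessary; without it, your argument does not close.
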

\begin{proof}
 It suffices to show that the multiplication lifts to the completion.
 Similarly to before, by \cref{prop:lift_to_completion_via_entourages} it is enough to show \[\bigvee\{[\upsilon_*(a) \in F] \oplus [\upsilon_*(b) \in F] \mid a \oplus b \oplus a \oplus b \le (\mu_\times \times \mu_\times)^*(T_u)\} = 1\]
 for each basic entourage $T_u$.
 Note that the inequality  can be expressed in the internal logic as
 $x,x' \in a \land y,y' \in b \vdash_{x,y,x',y' \colon G} xy-x'y' \in u$. Now consider the following sequence of equalities.
 \begin{align*}
  xy-x'y' &= xy - xy' + xy' - x'y' \\
          &= x(y-y') + (x-x')y' \\
          &= (\tilde{x} + x - \tilde{x})(y-y') + (x-x')(\tilde{y} + y' - \tilde{y}) \\
          &= \tilde{x}(y-y') + (x - \tilde{x})(y-y') + (x-x')\tilde{y} + (x-x')(y' - \tilde{y})
 \end{align*}
 By the uniform continuity of addition there is a $u'$ such that $T_{u'}$ is an entourage and whenever each of these terms lies in $u'$ their sum $xy-x'y'$ lies in $u$.
 
 Consider $a \in \O R$ such that $a \oplus a \le T_v$ for some entourage $T_v$ to be determined later.
 Note that $\bigvee\{[\upsilon_*(a) \in F] \mid a \oplus a \le T_v\} = 1$ since $T_v$ is an entourage.
 We set $w = \{z\colon R \mid z \in v \land \exists \tilde{x}\colon a.\ \tilde{x}z \in u'\}$ and note that $\vdash 1 \in w$. Now take $b \oplus b \le T_w$. As before we find that the join of $[\upsilon_*(a) \in F] \oplus [\upsilon_*(b) \in F]$ over all such $a$ and $b$ is $1$.
 In the internal logic, if $x,x' \in a$ and $y,y' \in b$ we have $x-x', y-y' \in v$ and some $\tilde{x} \in a$ such that $\tilde{x}(y-y') \in u'$.
 
 We also wish to show $(x-\tilde{x})(y-y') \in u'$.
 We know $x - \tilde{x} \in v$ and $y-y' \in v$. Now we use `continuity at 0' to choose $v$ such that $v \oplus v \le \mu_\times^*(u')$, which gives the desired result.
 
 Similarly, a dual choice of $a$ and $b$ gives that the other two terms lie in $u'$. Taking the meet of these two choices of $a$ and $b$ then means all of the terms lie in $u'$ and hence $xy - x'y' \in u$.
 Thus, the result follows as in \cref{lem:group_multiplication_lifts}.
\end{proof}
\begin{corollary}
 The ring structure on $\Q$ lifts to a ring structure on $\R$.
\end{corollary}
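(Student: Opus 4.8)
The plan is to observe that this corollary is an immediate application of \cref{prop:localic_ring_completion} to $R = \Q$, once one recalls the identification $\R \cong \Cvar\Upsilon\Q$ established above. So all that is needed is to confirm that the discrete localic ring $\Q$, equipped with its Euclidean pre-uniform structure, meets the three hypotheses of that proposition: that $\Q$ is a localic ring, that its uniformity is translation invariant for addition, and that multiplication is continuous at $0$.

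The first two are essentially immediate. The discrete locale $\Q$ carries the usual ring operations and so is a localic ring, and we have already noted that its uniformity is translation invariant; concretely the basic entourages $E_q = \{(x,x') \colon \Q \times \Q \mid \abs{x - x'} < q\}$ are unchanged under $(x, x') \mapsto (x + z, x' + z)$, so each is of the form $T_u$ with $u = \{z \colon \Q \mid \abs{z} < q\}$.

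The only real work is the continuity-at-$0$ condition, and this is where I would focus. Because $\Q$ is discrete, its internal logic reduces to ordinary decidable reasoning about rationals, so the condition $v \oplus v \le \mu_\times^*(u)$ amounts to finding, for each $q \in \Q_+$, a $\delta \in \Q_+$ such that $\abs{z} < \delta$ and $\abs{z'} < \delta$ force $\abs{z z'} < q$. The mild obstacle here is that the naive choice $\delta = \sqrt{q}$ need not be rational; I would instead take $\delta = \min(1, q)$, so that $\abs{z z'} < \delta^2 \le \delta \le q$, using $\delta \le 1$ for the first inequality. Setting $v = \{z \colon \Q \mid \abs{z} < \delta\}$ then gives $z \in v \land z' \in v \vdash z z' \in u$ in the internal logic, that is $v \oplus v \le \mu_\times^*(u)$, as required.

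With all three hypotheses verified, \cref{prop:localic_ring_completion} yields that $\Cvar\Upsilon\Q \cong \R$ inherits a localic ring structure with $\gamma_{\Upsilon\Q}\upsilon_\Q \colon \Q \to \R$ a ring homomorphism, completing the proof.
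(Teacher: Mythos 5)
Your proposal is correct and follows the same route as the paper: reduce the corollary to the hypotheses of \cref{prop:localic_ring_completion} and verify continuity of multiplication at $0$ by taking $\delta = \min(\epsilon,1)$, exactly as the paper does. The extra explicit checks of translation invariance and the ring structure on the discrete locale $\Q$ are harmless elaborations of what the paper treats as already established.
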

\begin{proof}
 By the above it suffices to show the multiplication on $\Q$ is `continuous at 0'. This amounts to showing for each $\epsilon \in \Q_+$ there is a $\delta \in \Q_+$ such that if $\abs{x}, \abs{y} < \delta$ then $\abs{xy} < \epsilon$.
 We can now simply take $\delta = \min(\epsilon, 1)$ so that $\abs{xy} = \abs{x}\abs{y} < \delta^2 \le \epsilon \cdot 1 = \epsilon$.
\end{proof}
\begin{remark}
 We can also show in an entirely analogous way (using the $p$-adic absolute value) that the $p$-adic uniformity on $\Q$ is translation invariant and continuous at $0$ and hence there is an induced localic ring structure on the resulting locale of $p$-adic numbers.
\end{remark}

\hypersetup{bookmarksdepth=-1}
\subsection*{Acknowledgements}

I acknowledge financial support from the Centre for Mathematics of the University of Coimbra (UIDB/00324/2020, funded by the Portuguese Government through FCT/MCTES).

I would also like to thank José Siqueira for finding an online copy of the thesis of Fox \cite{fox2005thesis} for me. In case anyone else is also looking for it, it can be found on the University of Manchester Library website at \url{https://uomlibrary.access.preservica.com/uncategorized/IO_7a11b53c-c8fb-4b0d-9d3d-ed91949d7ce0}.

\bibliographystyle{abbrv}
\bibliography{references}

\end{document}